\documentclass{amsart}
\usepackage{amssymb,amsxtra}

\newtheorem{lemma}{Lemma}[section]
\newtheorem{thm}[lemma]{Theorem}
\newtheorem{prop}[lemma]{Proposition}
\newtheorem{cor}[lemma]{Corollary}

\newcommand{\arr}[2]{\begin{array}{#1}#2\end{array}}

\newcommand{\matr}[2]{\left(\begin{array}{#1}#2\end{array}\right)}
\newcommand{\slrw}[1]{\stackrel{#1}{\longrightarrow}}
\newcommand{\hmm}[2]{\mathrm{Hom}_{#1}(#2)}

\begin{document}
\title{Coverings and Truncations of Graded Self-injective Algebras}
\author[Guo]{Jin Yun Guo}
\email{Guo:gjy@xtu.edu.cn}
\address{Jin Yun Guo\\ Department of Mathematics, Xiangtan University, Xiangtan,  CHINA}
\thanks{This work is partly supported by
Natural Science Foundation of China \#10971172}
\dedicatory{}

\subjclass{Primary {16G20}; Secondary{ 16S34,16P90}}

\keywords{}

\date{}

\begin{abstract}
Let $\Lambda$ be a graded self-injective algebra. We describe its smash product $\Lambda\# k\mathbb Z^*$ with the group $\mathbb Z$, its Beilinson algebra and their relationship.
Starting with $\Lambda$, we construct algebras with finite global dimension, called $\tau$-slice algebras,
we show that their trivial extensions are all isomorphic, and their repetitive algebras are the same  $\Lambda\# k\mathbb Z^*$.
There exist $\tau$-mutations similar to the BGP reflections for the $\tau$-slice algebras.
We also recover Iyama's absolute $n$-complete algebra as truncation of the Koszul dual of certain self-injective algebra.
\end{abstract}


\maketitle

\section{Introduction}
In \cite{bgg}, it is proved that the derived category $D(\mathrm{coh}\,\mathbb P^{n-1})$ of the coherent sheaves of a projective space is equivalent to the stable category $\underline{\mathrm{gr}}\wedge V$ of the exterior algebra (called BGG correspondence).
Koszul duality between Artin-Schelter regular Koszul algebra and the self-injective Koszul algebra \cite{s, m2} generalizes BGG correspondence to non-commutative setting \cite{ms, mor, jo, hw}.
It is also known that the derived category of projective line is equivalent to  the derived category of a Kronecker algebra, which is hereditary and of finite global dimension \cite{lz}.
Recently, Chen  proves in \cite{cx} that for a well graded self-injective algebra, the category of its graded  modules is equivalent to  the category of the graded modules over the trivial extension algebra of its Beilinson algebra, which is of finite global dimension.
As a consequence, the derived category of its Beilinson algebra is equivalent to the stable category of its trivial extension \cite{cx}.
So the case of Kronecker algebra can be generalized, and the BGG correspondence is extended to a derived category of algebra of finite global dimension, and we get equivalences of triangulated categories as follows,
$$D^b(\mathrm{coh} X) \simeq \underline{\mathrm{gr}} \Lambda \simeq D^b(\Lambda'),$$
here the left side is the bounded derived category of the quasi-coherent sheaves of  non-commutative projective space, middle is the stable category of a graded self-injective algebra $\Lambda$ and the right side is the bounded derived category of an algebra $\Lambda'$ of finite global dimension.
According to \cite{cx}, the right equivalence is also well known \cite{h1, h2}, when we started with $\Lambda'$.
It follows from \cite{h1} that  $D^b(\Lambda')$ is also equivalent to $\underline{\bmod}\widehat{\Lambda'}$, the stable category of finite generated modules over the repetitive algebra $\widehat{\Lambda'}$ of ${\Lambda'}$, as triangulate categories.

This paper mainly studies the algebras appearing in the right side of the above equivalences of the triangulated categories.
Starting with a graded self-injective algebra $\Lambda$, we are interesting in the algebras  $\Lambda'$ of finite global dimension.
Our approach is similar to the classical approach initiated in \cite{hwu}, and developed, e.g., in \cite{sy, oty}, using repetitive algebras and coverings in study self-injective algebra.
Our aim is to find out how to construct the algebras $\Lambda'$ of finite global dimension from $\Lambda$, and how such algebras are related. Coverings and truncations related to the Nakayama functor play key roles in our approach.
We also show by example how Iyama's higher representation theory is related to our constructions.

\medskip

Throughout this paper we assume that $k$ is an algebraically closed field.

Let $\Lambda$ be a self-injective algebra over $k$ and let $\mathcal N = D\mathrm{Hom}_{\Lambda}(\quad, \Lambda)$ be the Nakayama functor.
$\mathcal N$ is an auto-equivalence on the category of $\Lambda$-modules, and it induces a permutation $\tau$ on the vertex set of the Gabriel quiver of $\Lambda$, which we call {\em Nakayama translation}.
When the group $G$ generated by the Nakayama translation acts freely on the vertices, $\Lambda$ is a regular covering of its orbit algebra with respect to the group $G$. This orbit algebra is a weakly symmetric algebra, that is, graded self-injective with trivial Nakayama translation.

To find $\Lambda'$ when a graded self-injective algebra $\Lambda$ is given, we first go to the smash product $\Lambda \# k\mathbb Z^*$ of $\Lambda$ with the group $\mathbb Z$, whose bound quiver is described as the separated directed quiver of $\Lambda$.
The Beilinson algebra of $\Lambda$ defined in \cite{cx} is the first candidate for $\Lambda'$.
We describe the bound quiver of the Beilinson algebra of $\Lambda$ as some truncation of the bound quiver of $\Lambda\#k \mathbb Z^*$.
We also show that the orbit algebra of $\Lambda\# k\mathbb Z^*$ with respect to the Nakayama translation is a twisted trivial extension of the Beilinson algebra and that $\Lambda\# k\mathbb Z^*$ is exactly the repetitive algebra of the Beilinson algebra.

There are more such algebras  $\Lambda'$ for a given graded self-injective algebra $\Lambda$, in addition to the Beilinson algebra.
These algebras are obtained by truncate the bound quiver of  $\Lambda\# k\mathbb Z^*$ mimic the complete slice in the tilting theory \cite{hr, hwu}.
We call the truncated bound quivers complete $\tau$-slices and the  algebras obtained $\tau$-slice algebras.
We also introduce the $\tau$-mutations for the $\tau$-slices algebras, mimic the Bernstein-Gelfand-Poromarev(BGP) reflections.
Each connected component of the Beilinson algebra defined in \cite{cx} is a $\tau$-slice algebra.
We show all the $\tau$-slice algebras have equivalent derived categories, by showing that they have the same trivial extensions, and the same repetitive algebra, $\Lambda\# k\mathbb Z^*$.

The $\tau$-mutation is not a direct generalization of the BGP reflection, we need to go to the Koszul dual for it.
We show that $\tau$-slice algebras are Koszul whenever $\Lambda^T$ is.
If $\Lambda$ is of Loewy length $3$, all the $\tau$-slice algebras are Koszul and the $\tau$-mutation induces the BGP reflection on their Koszul dual.

Using Nakayama translations in the place of higher Auslander-Reiten translation, we obtained Iyama's absolute $n$-complete algebra as a truncation of the Artin-Schelter regular algebra associated to some McKay quiver\cite{gum} (called  $m$-cubic algebra).
Here we need the characteristic of $k$ to be $0$.
Starting with a finite cyclic group $\mathbb Z/(r+1)\mathbb Z$ of order $r+1$ as a subgroup of $k^*$, we construct by covering and embedding to get a direct sum $(\mathbb Z/(r+1)\mathbb Z)^{m}$ of $m$ copies of cyclic group $\mathbb Z/(r+1)\mathbb Z$ in $\mathrm{GL}(m, k)$, whose McKay quiver has the form a netted $m$-ball, and locally, the Nakayama translation is related to the diagonal of  $m$-cubes.
We first embed $(\mathbb Z/(r+1)\mathbb Z)^{m-1}$ with $\mathrm{GL}(m-1,k)$ into  $\mathrm{SL}(m,k)$, to make Nakayama translation returning arrows opposite to the maximal bound paths at each vertex in the McKay quiver.
Then use covering to extend the group to a subgroup $(\mathbb Z/(r+1)\mathbb Z)^{m}$ in $\mathrm{GL}(m,k)$ with non-trivial Nakayama translation in its  McKay quiver(see \cite{gcv}).
This new Nakayama translation coincides with the $m$-Auslander-Reiten translation in \cite{iy3}, when we truncate out Iyama's absolute $m$-complete algebra from the $m$-cubic algebra.
We believe that graded self-injective algebra provided certain framework for study the higher representation theory, and those with finite complexity behavior like tame algebras.

The paper is organized as follow.
In Section 2,  we recall basic notions on bound quivers, path algebras etc., to fix terminology.
We also give a description on the graded self-injective algebra of Loewy length $l+1$, using bound quiver with Nakayama translation.
In Section 3, we study the orbit algebra of a graded self-injective algebra $\Lambda$ with respect to the Nakayama functor.
We show that the orbit algebra with respect to the Nakayama functor is self-injective and $\Lambda$ is a covering of the orbit algebra when the group generated by the Nakayama functor acts freely.
In section 4, we introduce separated directed quiver $\overline{Q}$ for the bound quiver $Q$ of a graded self-injective algebra $\Lambda$.
We show that this quiver is the bound quiver of the smash product $ \Lambda\# k\mathbb Z^*$ and the group generated by the Nakayama functor acts freely on the indecomposable projective $ \Lambda\# k\mathbb Z^*$-modules.
We also introduce special truncated quiver of the separated directed quiver, and discuss some basic properties of these quivers.
In Section 5, We show that the bound quiver of the Beilinson algebra of $\Lambda$ is the total special truncated quiver.
We also show that the orbit algebra of $ \Lambda\# k\mathbb Z^*$ with respect to the group generated by the Nakayama functor is isomorphic to a twisted trivial extension of the Beilinson algebra, and that $ \Lambda\# k\mathbb Z^*$ is the repetitive algebra of the Beilinson algebra of $\Lambda$.
In Section 6, we introduce $\tau$-slices, $\tau$-slice algebras and $\tau$-mutations.
The action of the $\tau$-mutations on the $\tau$-slices is transitive.
We prove that the trivial extension of the $\tau$-slice algebras are isomorphic when they are  $\tau$-mutations one another.
Such isomorphism induces equivalence of the derived categories of the $\tau$-slice algebras, and $\tau$-mutation can be regarded as generalization of tilting process.
We also show that all the $\tau$-slice algebras have the same repetitive algebra, $ \Lambda\# k\mathbb Z^*$.
In the Section 7, we prove that all the $\tau$-slice algebras are Koszul if we start with a Koszul self-injective algebra, and for a self-injective algebra with vanishing radical cube,  the Yoneda algebra of the $\tau$-mutation of a  $\tau$-slice algebra is exactly the  BGP reflection of its Yoneda algebra.
In the last section, we show that Iyama's absolute $m$-complete algebra is a truncation of an $m$-cubic algebra, which is the Koszul dual of the skew group algebra of direct sum of $m$ copies of a cyclic group over exterior algebra.

\section{Preliminaries}

Throughout this paper, all the algebras are basic and all the modules are left modules, when not specialized.
By a quiver we usual mean a bound quiver $Q = (Q_0, Q_1, \rho)$, that is, a quiver with  the vertex set $Q_0$, the arrow set $Q_1$  and the relation set $\rho$.
$\rho$ is a set of linear combinations of paths of length larger or equal to $2$.
We use the same notation $Q$ for both the quiver and the bound quiver, denote by $kQ$ the path algebra of $Q$ and by $k(Q)= kQ/(\rho)$ the algebra given by the bound quiver $Q$, that is, the quotient algebra of the path algebra of $Q$ modulo the ideal generated by the relations.
A path is called a {\em  bound path} if its image in $k(Q)$ is nonzero.

By Gabriel's theorem, $\Lambda$ is given by its bound quiver $Q_{\Lambda} = (Q_0, Q_1, \rho)$, that is, $\Lambda \simeq kQ/(\rho)$.
An algebra $\Lambda$  is called a graded  algebra in this paper if $\Lambda = \Lambda_0 +\Lambda_1 + \Lambda_2 +\cdots$ as a direct sum of vector spaces. $\Lambda_0 $ is semi-simple basic algebra, a direct product of $|Q_0|$-copies of $k$, and $\Lambda_i \Lambda_j = \Lambda_{i+j}$.
Write $\mathbf r = \Lambda_1 + \Lambda_2 +\cdots$.
Let $\{e_i| i\in Q_0\}$  be a complete set of orthogonal idempotents of $\Lambda$, then $1=\sum_{i\in Q_0}e_i$.
Let  $E(\Lambda) = \mathrm{Ext}_{\Lambda}(\Lambda_0, \Lambda_0)$ be its Yoneda algebra.

We now characterize the graded self-injective algebras  using bound quivers.
We say that a bound quiver $Q=(Q_0,Q_1,\rho)$ is {\em homogeneous} provided that each of the paths appearing in a given linear combination of $\rho$ has the same length.
Two relation sets $\rho$ and $\rho'$ of a given quiver $Q$ is said to be equivalent if they generate the same ideal in the path algebra $kQ$.
In this case, we also say that two bound quivers $(Q_0,Q_1,\rho)$ and $(Q_0,Q_1,\rho')$ are equivalent.
Clearly, equivalent bound quivers define isomorphic algebras.

Since path algebra is graded, we obvious have the following proposition.

\begin{prop}\label{grada}
$k(Q)$ is a graded algebra if and only if $Q$ is equivalent to a homogeneous bound quiver.
\end{prop}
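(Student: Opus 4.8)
The plan is to exploit the natural path-length grading on the path algebra $kQ=\bigoplus_{n\ge 0}(kQ)_n$, for which $(kQ)_0=\bigoplus_{i\in Q_0}k e_i\cong k^{|Q_0|}$, $(kQ)_1$ is spanned by the arrows, and $(kQ)_i(kQ)_j=(kQ)_{i+j}$. The proposition then amounts to the standard principle that a quotient of a graded algebra inherits a grading exactly when the defining ideal is homogeneous, translated into the language of bound quivers.

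For the implication ($\Leftarrow$), suppose $Q$ is equivalent to a homogeneous bound quiver $(Q_0,Q_1,\rho')$, so $(\rho')=(\rho)$ and each element of $\rho'$ is a combination of paths of one fixed length. First I would observe that an ideal generated by length-homogeneous elements is a graded ideal of $kQ$, so that $\Lambda=k(Q)=kQ/(\rho')$ carries the quotient grading $\Lambda_n=\big((kQ)_n+(\rho')\big)/(\rho')$. Since $\rho'\subseteq\mathbf{r}^2$ imposes no relations in degrees $0$ and $1$, we get $\Lambda_0\cong k^{|Q_0|}$, and $\Lambda_i\Lambda_j=\Lambda_{i+j}$ descends from the corresponding identity in $kQ$. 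Thus $\Lambda$ is a graded algebra in the stated sense; this direction is routine.

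For the converse ($\Rightarrow$), where the real content lies, assume $\Lambda=k(Q)=\bigoplus_n\Lambda_n$ is graded as specified. Since $\Lambda_0$ is semisimple, $\mathbf{r}=\Lambda_{\ge 1}$ is the Jacobson radical, and the hypothesis $\Lambda_i\Lambda_j=\Lambda_{i+j}$ gives $\mathbf{r}^2=\Lambda_{\ge 2}$, whence $\Lambda_1\cong\mathbf{r}/\mathbf{r}^2$. The arrows of the Gabriel quiver $Q$ correspond to a $k$-basis of $\mathbf{r}/\mathbf{r}^2$, so I would lift them to homogeneous elements of $\Lambda_1$; sending each vertex to its idempotent and each arrow to its chosen degree-one lift defines an admissible surjection $p\colon kQ\to\Lambda$ that is a morphism of graded algebras from the length grading to the given one. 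Its kernel is therefore a homogeneous ideal of $kQ$, and any homogeneous generating set $\rho''$ of $\ker p$ presents $\Lambda$ by a homogeneous bound quiver.

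The main obstacle to watch is precisely this last step: the grading on $\Lambda$ is given abstractly and need not be the length grading, so the originally chosen relations $\rho$ need not themselves be homogeneous. The point to verify carefully is that $\mathbf{r}^2=\Lambda_{\ge 2}$ (forcing $\Lambda_1$ to be exactly $\mathbf{r}/\mathbf{r}^2$) and that the arrow lifts can be taken inside $\Lambda_1$, since this is what makes $p$ graded and its kernel homogeneous. The passage from $\rho$ to $\rho''$ is an equivalence of bound quivers in the required sense, realized by the degree-filtered change of arrow representatives, and it is this replacement — rather than homogeneity of the original $\rho$ — that the statement is asserting.
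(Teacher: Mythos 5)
Your proposal is correct, and it supplies in full the argument the paper leaves implicit: the paper's entire justification for this proposition is the one-line remark that the path algebra is graded, so there is no written proof to diverge from. Your ($\Leftarrow$) direction is the routine half the paper alludes to; the real content is your converse, where you use $\Lambda_i\Lambda_j=\Lambda_{i+j}$ to get $\mathbf{r}=\Lambda_{\ge 1}$ and $\mathbf{r}^2=\Lambda_{\ge 2}$, hence $\Lambda_1\cong\mathbf{r}/\mathbf{r}^2$, lift the arrows into $\Lambda_1$, and obtain a surjection $p\colon kQ\to\Lambda$ that is graded from the length grading, whose kernel is therefore a graded ideal; since $p$ is bijective in degrees $0$ and $1$ (the lifts of the arrows are linearly independent in $\Lambda_1$ because $\Lambda_1\cap\Lambda_{\ge 2}=0$, and a dimension count finishes it), the homogeneous generators of $\ker p$ automatically involve only paths of length $\ge 2$ of a fixed common length --- a small point worth stating explicitly, as it is what makes $\rho''$ a legitimate homogeneous relation set.

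One caveat, which you sense correctly but phrase too optimistically in your final paragraph: under the paper's \emph{literal} definition of equivalence (two relation sets on the same quiver are equivalent iff they generate the \emph{same} ideal of $kQ$), the proposition is false as stated, and no proof can rescue it. For instance, take $Q$ to be $1\slrw{a}2\slrw{b}3\slrw{c}4$ with an extra arrow $d\colon 1\to 3$; the ideal $(cba+cd)$ is one-dimensional, spanned by the non-homogeneous element $cba+cd$ (vertex $1$ is a source and $4$ a sink, so nothing composes with it), hence it is not a graded ideal and equals no ideal generated by homogeneous elements --- yet $kQ/(cba+cd)\cong kQ/(cd)$ is graded, via the automorphism of $kQ$ sending $d\mapsto d+ba$. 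Your $\rho''$ generates $\ker p$, which in general differs from the original ideal $(\rho)$ exactly by such a change of arrow representatives; so your argument proves the proposition in its intended (and only tenable) reading --- $\Lambda$ admits a presentation by a homogeneous relation set on the same quiver --- and the imprecision lies in the paper's definition of ``equivalent,'' not in your proof. Since the paper itself only ever uses the proposition in this form (``we may assume that $Q$ is homogeneous'' in the proof of Theorem~\ref{iffsi}), your proof does everything that is actually needed.
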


Fix an integer $l \ge 1$, a homogeneous bound quiver $Q$ is said to  be {\em stable of Loewy length} $l+1$ if there is a permutation $\tau$ on the vertex set of the quiver, such that the following conditions are satisfied.
\begin{enumerate}
  \item The maximal bound paths of $Q$ have the same length $l$;
  \item For each vertex $i$, there is a maximal bound path from $\tau i$ to $i$;
  \item There is no bound path of length $l$ from $\tau i$ to $j$ for any $j \neq i$;
  \item All the maximal bound paths starting at the same vertex are linearly dependent.
\end{enumerate}
The permutation $\tau$ is called the {\em Nakayama translation} of the stable bound quiver $Q$.

We have the following theorem characterizing the bound quiver  of a graded self-injective algebra.

\begin{thm}\label{iffsi}
Let $\Lambda=k(Q)$ be the algebra given by a bound quiver  $Q$, then $\Lambda$ is a graded self-injective algebra with Loewy length $l+1$ if and only if $Q$ is equivalent to a stable bound quiver of Loewy length $l+1$.
\end{thm}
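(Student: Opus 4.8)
The plan is to prove both implications by translating the module-theoretic content of self-injectivity into the combinatorics of the socles of the indecomposable projectives $P_i = \Lambda e_i$. By Proposition \ref{grada} I may assume at the outset that $Q$ is homogeneous, so that the path-length grading realises $\Lambda = \Lambda_0 \oplus \cdots \oplus \Lambda_l$ with $\mathbf{r}^n = \Lambda_n \oplus \cdots \oplus \Lambda_l$. In particular the grading is the radical grading, a bound path of length $n$ from $i$ to $j$ is a basis vector of $e_j \Lambda_n e_i$, and a bound path is maximal (non-extendable on either side) exactly when it lies in the top graded layer of the projective at its source, i.e. spans part of its socle. I also reduce to $Q$ connected, applying the statement componentwise, since across different components the maximal paths may have different lengths.

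For the forward direction, suppose $\Lambda$ is graded self-injective of Loewy length $l+1$. Being basic and self-injective, $\Lambda$ is Frobenius, so each $P_i$ has a simple socle $S_{\sigma i}$ with $\sigma$ a permutation; set $\tau = \sigma^{-1}$. The socle of a graded module is graded, and the top layer $\Lambda_{l_i} e_i$ (where $l_i$ is the length of the longest bound path starting at $i$) is annihilated by $\mathbf{r}$; simplicity therefore forces $\mathrm{soc}\,P_i = \Lambda_{l_i}e_i$ to be one-dimensional and concentrated in the single degree $l_i$. The key point is that $l_i = l$ for all $i$. I would deduce this from the graded Nakayama functor: as an autoequivalence of graded modules it carries each graded projective to a graded injective, hence projective, giving $\nu P_i \cong P_{\pi i}(t_i)$ for a permutation $\pi$ and shifts $t_i$ recording the socle degrees; comparing degrees of the hom-spaces along the arrows of the connected quiver $Q$ forces the $t_i$ to coincide, so $D\Lambda \cong \Lambda$ up to a single shift and the socle is concentrated in top degree $l$. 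Granting this, the maximal bound paths from $i$ are precisely the nonzero vectors of $\mathrm{soc}\,P_i = \Lambda_l e_i$: they all have length $l$, giving (1), and are pairwise linearly dependent since $\mathrm{soc}\,P_i$ is one-dimensional, giving (4). Applying this to $P_{\tau i}$, whose socle is $S_{\sigma\tau i} = S_i$, produces a maximal path $\tau i \to i$, giving (2); and since by (1) every length-$l$ path from $\tau i$ is maximal and so ends at $\sigma\tau i = i$, none ends at $j \neq i$, giving (3). Thus $Q$ is stable of Loewy length $l+1$.

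For the backward direction, assume $Q$ is stable of Loewy length $l+1$ with translation $\tau$, and put $\Lambda = k(Q)$, graded by Proposition \ref{grada}. Conditions (1) and (2) supply bound paths of length $l$ but none of length $l+1$, so $\mathbf{r}^l \neq 0 = \mathbf{r}^{l+1}$ and the Loewy length is $l+1$. To prove self-injectivity I exhibit a Frobenius form. By (2) and (4) the maximal bound paths ending at a vertex $j$ all start at $\tau j$ and span a one-dimensional space; fix a representative $p_j$ and define $\phi\colon \Lambda \to k$ sending each $p_j$ to $1$ and every other path in a fixed basis to $0$. The form $(x,y)\mapsto\phi(xy)$ is graded, pairing $\Lambda_d$ with $\Lambda_{l-d}$. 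Its nondegeneracy is the crux: given a nonzero bound path $q\colon i\to j$, condition (1) shows $q$ is not maximal unless its length is $l$, so it prolongs on the left and, iterating, completes to a maximal path ending at $j$, which by (3) runs $\tau j\to j$ and hence equals a scalar multiple of $p_j$; writing $p_j = q\,q'$ gives $\phi(q q')\neq 0$. A filtration argument on path length upgrades this to injectivity of $x\mapsto\phi(x\,\cdot\,)$, so the form is nondegenerate, $\Lambda$ is Frobenius, and in particular self-injective.

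The routine part is the dictionary between socles and maximal paths; the genuine work lies in the two nondegeneracy-type statements. In the forward direction the obstacle is the uniformity $l_i = l$, equivalently that a connected graded self-injective algebra has its socle in a single top degree, which I handle through the graded Nakayama functor together with connectedness. In the backward direction the obstacle is the nondegeneracy of the Frobenius form, resting on the combinatorial fact, extracted from (1) and (3), that every bound path completes to a maximal path ending at the same vertex, this completion being unique up to the one-dimensionality guaranteed by (4).
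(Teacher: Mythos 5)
Your forward direction is in substance the paper's: the paper likewise normalizes $Q$ to a homogeneous presentation, reads the permutation off the simple socles of the projectives, and extracts conditions (2)--(4) from $P_i\simeq I_{\tau^{-1}i}$; for the uniformity $l_i=l$ it simply cites \cite{m2}, where you sketch a graded-Nakayama argument --- compressed, but the right idea. The genuine gap is in your backward direction, at exactly the point you yourself call the crux. Completing each individual bound path to a maximal one does not yield nondegeneracy of $(x,y)\mapsto\phi(xy)$, and no ``filtration argument on path length'' can bridge this: nondegeneracy is a statement about linear combinations inside a fixed graded piece $e_j\Lambda_t e_i$, i.e.\ precisely about the pairing of the degree-$t$ layer with the degree-$(l-t)$ layer, which is what the filtration would have to take as input.

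Here is why the step fails concretely. Take $Q$ with vertices $1,2$, arrows $\alpha_1,\alpha_2\colon 1\to 2$ and $\beta_1,\beta_2\colon 2\to 1$, and relations identifying all four paths $\beta_s\alpha_r$ with a single nonzero element $w_1$, all four $\alpha_r\beta_s$ with a single nonzero $w_2$, and killing all paths of length $3$. Every arrow completes to a maximal bound path of length $l=2$, and conditions (1)--(4) hold on their literal reading with $\tau=\mathrm{id}$ (all maximal bound paths from a fixed vertex are even equal in $k(Q)$); yet $z=\alpha_1-\alpha_2$ satisfies $\phi(yz)=0=\phi(zy)$ for all $y$, so your form is degenerate, $z$ is a degree-one socle element of $\Lambda e_1$, and $k(Q)$ is not self-injective because $\mathrm{soc}\,\Lambda e_1\simeq S_1\oplus S_2$ is not simple. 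So path-by-path completion genuinely cannot be upgraded: a correct argument must prove that the socle of each $\Lambda e_i$ is concentrated in degree $l$ (equivalently, spanned by maximal bound paths), and as the example shows this requires extracting more from the hypotheses than pairwise proportionality of maximal paths --- in effect the nondegeneracy of the path pairing in the sense of Lemma~\ref{bil} is what is really at stake. For comparison, the paper's proof of this direction avoids the Frobenius form: it asserts that the maximal bound paths span the socle, deduces $\mathrm{soc}\,P_i\simeq S_{\tau^{-1}i}$ and dually that the indecomposable injectives have simple tops, and concludes $P_i\simeq I_{\tau^{-1}i}$; the spanning assertion it makes without proof is exactly the statement your filtration would need, so your proposal relocates, but does not fill, the hole. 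A smaller slip in the same passage: a non-maximal bound path $q\colon i\to j$ is only guaranteed to prolong on \emph{one} side, so your completion $p_j=qq'$ ending at $j$ need not exist; in general one only gets a maximal path $aqb$, and $\phi(aqb)\neq 0$ controls the two-sided radical of $\phi$, not the one-sided radicals that the Frobenius property requires.
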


\begin{proof}
Assume that $\Lambda=k(Q)$ is the algebra given by a stable bound quiver $Q$ of Loewy length $l+1$ with Nakayama translation $\tau$.
$\mathbf r$ is its radical, and we have that $\mathbf r^{l+1} =0$ since maximal bound paths in $Q$ have the same  length $l$.
Let $e_i$ be the idempotent corresponding to the vertex $i$, and let $S_i$ be the simple module corresponding to $i$, $P(i)$ be its projective cover  and $I(i)$ be its injective envelope.
For maximal bound path $p$, $\mathbf r p = 0$, so they span the socle of $\Lambda$. Since maximal bound paths starting at $i$ end at $\tau^{-1} i$, and they are all linearly dependent, we see that the socle of $P_i \simeq \Lambda e_i$ is isomorphic to $S_{\tau^{-1} i}$.
So each indecomposable projective has a simple socle and $\mathrm{soc\,} P_i \simeq S_{\tau^{-1} i}= P_{\tau^{-1} i} /\mathbf r P_{\tau^{-1} i}$.
By considering the maximal bound paths ending at $i$, one gets that indecomposable injective has a simple top and $I_i/ \mathbf r I_i \simeq S_{\tau i}$.
This implies that $P_i \simeq I_{\tau^{-1} i}$ for each $i \in Q_0$, and $\Lambda$ is a graded self-injective algebra of  Loewy length $l+1$.

Let $Q=(Q_0,Q_1,\rho)$ be a bound quiver of a graded self-injective algebra $\Lambda$.
We may assume that $Q$ is homogeneous.
Let $\tau$ be the permutation of $Q_0$ induced by the Nakayama functor of $\bmod \Lambda$.
Then $\tau$ sends each vertex $i$ to the vertex $\tau i$ corresponding to the top of the injective envelope of the simple $S_i$.
Since projectives have the same Loewy length, say $l$, by \cite{m2}, so the maximal bound paths of $Q$ have the same length $l$.
Since an indecomposable projective with top $S_i$ is the indecomposable injective with socle $S_{\tau^{-1} i}$  for each vertex $i$, there is a maximal bound path from $\tau i$ to $i$ which is a multiple of each bound path of length $l$ from $\tau i$ to $i$.
We also see that there is no bound path of length $l$ from $\tau i$ to $j$ for any $j \neq
i$.
This shows that $Q$ is a stable bound quiver of Loewy length $l+1$.
\end{proof}

So the Nakayama translation is induced by the Nakayama functor.

A {\em walk} from a vertex $i$ to a vertex $j$ in a quiver is a sequence of  paths $p_1,\ldots,p_r$ in $Q$ satisfies the following conditions:
\begin{enumerate}
 \item $r$ is odd, the length $l(p_t)>0$ for $t=2,\ldots, \frac{r-1}{2}$, (here we allow the length of $p_1$ and $p_r$ to be zero);
 \item $i$ is the starting vertex of $p_1$ and $j$ is the ending  vertex of $p_r$, $p_{2t}$ and $p_{2t+1}$ have the same starting vertex and $p_{2t-1}$ and $p_{2t}$ have the same ending vertex.
\end{enumerate}
When all the paths in a walk are bound paths, we call this walk a {\em bound walk}

Now consider a stable bound quiver.
By embedding a bound path in maximal ones, one sees that for a bound path from vertex $i$ to vertex  $j$, there is a bound path from $j$ to $\tau^{-1} i$ and a bound path from $\tau j$ to $i$.
For a walk in a stable bound quiver with two nontrivial bound paths starting from vertices $i$ and $i'$ and ending at vertex $j$, there is a walk with two nontrivial bound paths from $\tau j$ to $i$ and $i'$.
Dually, for a walk with two nontrivial paths starting at vertex $i$ and ending at vertices $j$ and $j'$, there is a bound walk with two nontrivial paths from $ j$ and $j'$ to $\tau^{-1} i$.
In a  finite connected stable bound quiver, $\tau$ is periodic, we have the following lemma.

\begin{lemma}\label{path}
Let $Q$ be a finite connected stable bound quiver.
For each pair of vertices $i$ and $i'$  which are connected by a walk in the stable bound quiver, there is an (unbound) path in $Q$ from $i$ to $i'$.
\end{lemma}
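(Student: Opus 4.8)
Here is how I would approach the proof. The plan is to reduce the statement to a single claim about arrows, and then to exploit the fact that a walk is nothing but a path in the underlying graph of $Q$. Concretely, I would show that \emph{the two endpoints of every arrow lie in one strongly connected component of $Q$}; that is, for an arrow $\alpha\colon i\to j$ there is also a directed path from $j$ back to $i$. Once this is established the lemma follows formally: a walk connecting $i$ and $i'$ exhibits these two vertices as lying in the same connected component of the underlying graph of $Q$, and since every arrow keeps us inside a single strongly connected component, that whole component must itself be strongly connected; in particular there is a directed path from $i$ to $i'$.

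The heart of the matter is therefore the return path for a single arrow. First I would observe that every arrow is automatically a bound path: the relations in $\rho$ are linear combinations of paths of length $\ge 2$, so $(\rho)\subseteq \mathbf r^2$ and no arrow can map to zero in $k(Q)$. Hence, for an arrow $\alpha\colon i\to j$, the embedding-a-bound-path-in-maximal-ones argument recalled just above applies and yields a bound path from $j$ to $\tau^{-1}i$. It then remains to connect $\tau^{-1}i$ back to $i$ by a directed path. Applying condition (2) of a stable bound quiver to the vertex $\tau^{-1}v$ gives, for \emph{every} vertex $v$, a maximal bound path from $v$ to $\tau^{-1}v$; concatenating the paths $\tau^{-1}i\to\tau^{-2}i\to\cdots$ around the $\tau$-orbit and using that $\tau$ is periodic in a finite connected stable bound quiver, one returns to $i$ and so obtains a directed path from $\tau^{-1}i$ to $i$. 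Composing the path from $j$ to $\tau^{-1}i$ with this orbit path produces the desired directed path from $j$ back to $i$.

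I expect the only genuine subtlety to be the following point, which is exactly why I would phrase the argument arrow-by-arrow rather than path-by-path. The data handed to us is a walk whose constituent paths $p_t$ are traversed alternately forwards and backwards and need \emph{not} themselves be bound, so one cannot apply the Nakayama-duality fact directly to a whole backward segment. The remedy is to factor each path into arrows, which always are bound, and to replace each backward arrow by its return path as above; this converts every backward traversal into a forward directed path and straightens the entire zigzag into a single directed path from $i$ to $i'$. Packaging everything as the strong-connectivity statement of the first paragraph is the cleanest way to organize the bookkeeping, and the periodicity of $\tau$ is precisely what guarantees that the orbit path closes up.
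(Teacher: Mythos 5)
Your proof is correct and follows essentially the same route as the paper, which gives no formal proof beyond the sketch in the paragraph preceding the lemma: reverse bound paths by embedding them in maximal ones (yielding a bound path from $j$ to $\tau^{-1}i$), travel down the $\tau$-orbit using the maximal bound paths from $v$ to $\tau^{-1}v$ supplied by condition (2), and close up via the periodicity of $\tau$ on the finite vertex set. Your only departure is organizational --- factoring the walk into single arrows (which are automatically bound, since the ideal $(\rho)$ lies in the span of paths of length $\ge 2$) and packaging the conclusion as strong connectivity --- and this actually tightens the paper's sketch, whose reversal step applies as stated only to bound segments, whereas the walk in the hypothesis need not consist of bound paths.
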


We will need the following results, see Lemma 2.1 of \cite{g4}.

\begin{lemma}\label{bil}
Let $\Lambda$ be a self-injective algebra, let $Q$ be its bound quiver and $\tau$ be the Nakayama translation on $Q$.
Then for any $i,j \in Q_0$ if $e_j \Lambda e_i \neq 0$, we have a non-degenerate bilinear form $e_{\tau^{-1} i} \Lambda e_j \otimes e_j\Lambda e_i \to k$ satisfying the multiplicative property, that is $(xy,z) = (x,yz)$.

If $\Lambda$ is graded with Loewy length $l+1$, the bilinear form restricts to a non-degenerate one on $e_{\tau^{-1} i} \Lambda_{l-t} e_j \otimes e_j\Lambda_t e_i \to k$ for $0 \le t \le l$, whenever $e_j\Lambda_t e_i \neq 0$.
\end{lemma}

So we have that $e_{\tau^{-1} i} \Lambda_{l-1} e_j \simeq D e_j\Lambda_1 e_i$ and $e_{j} \Lambda_{1} e_i \simeq \tau e_j\Lambda_1\tau e_i$ as vector spaces.
As a corollary of Lemma ~\ref{bil}, we have the following result.
\begin{cor}\label{arrows}
Let $\Lambda$ be a graded self-injective algebra with Loewy length $l+1$ and Nakayama translation $\tau$, let $Q$ be its quiver.
Then for any $i,j$ in $Q_0$, we have:
\begin{enumerate}
\item The number of arrows from $i$ to $j$ is $\mathrm{dim\,}_k \, e_{\tau^{-1} i} \Lambda_{l-1} e_j$;
\item The number of arrows from $i$ to $j$ and the number of arrows from $\tau i$ to $\tau j$ are the same.

\end{enumerate}
\end{cor}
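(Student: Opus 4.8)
The plan is to deduce both statements directly from the non-degenerate bilinear form furnished by Lemma~\ref{bil}, after translating ``number of arrows'' into a statement about the degree-one component of $\Lambda$. The only genuinely algebraic input is the non-degeneracy; everything else is bookkeeping. First I would record the combinatorial dictionary: since $\Lambda=k(Q)$ is graded with $\mathbf{r}=\Lambda_1+\Lambda_2+\cdots$ and is generated in degrees $0$ and $1$, the space $\Lambda_1$ is spanned by the arrows, and for any vertices $i,j$ a basis of $e_j\Lambda_1 e_i$ is given by the arrows from $i$ to $j$ (the idempotent $e_i$ on the right selects the source, $e_j$ on the left the target, consistently with $P_i=\Lambda e_i$ used in the proof of Theorem~\ref{iffsi}). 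Hence the number of arrows from $i$ to $j$ is $\dim_k e_j\Lambda_1 e_i$, and this is the quantity I must compute in both parts.

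For part (1), I would specialize Lemma~\ref{bil} to $t=1$: the form $e_{\tau^{-1}i}\Lambda_{l-1}e_j\otimes e_j\Lambda_1 e_i\to k$ is non-degenerate, which is precisely the identification $e_{\tau^{-1}i}\Lambda_{l-1}e_j\simeq D\,e_j\Lambda_1 e_i$ recorded just before the corollary. Taking dimensions gives $\dim_k e_{\tau^{-1}i}\Lambda_{l-1}e_j=\dim_k e_j\Lambda_1 e_i$, which is the assertion. In the degenerate case $e_j\Lambda_1 e_i=0$ there are no arrows from $i$ to $j$, and non-degeneracy then forces $e_{\tau^{-1}i}\Lambda_{l-1}e_j=0$ as well, so the equality of dimensions persists and no separate argument is needed.

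For part (2), I would chain the non-degeneracy at the two degrees $t=1$ and $t=l-1$. Part (1) already gives $\dim_k e_j\Lambda_1 e_i=\dim_k e_{\tau^{-1}i}\Lambda_{l-1}e_j$. Reading $e_{\tau^{-1}i}\Lambda_{l-1}e_j$ as $e_b\Lambda_{l-1}e_a$ with $b=\tau^{-1}i$ and $a=j$, and applying Lemma~\ref{bil} at $t=l-1$ to the pair $(a,b)$, the non-degeneracy of $e_{\tau^{-1}a}\Lambda_1 e_b\otimes e_b\Lambda_{l-1}e_a\to k$ yields $\dim_k e_b\Lambda_{l-1}e_a=\dim_k e_{\tau^{-1}j}\Lambda_1 e_{\tau^{-1}i}$. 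Composing the two equalities gives $\dim_k e_j\Lambda_1 e_i=\dim_k e_{\tau^{-1}j}\Lambda_1 e_{\tau^{-1}i}$, that is, the number of arrows from $i$ to $j$ equals the number from $\tau^{-1}i$ to $\tau^{-1}j$; replacing $(i,j)$ by $(\tau i,\tau j)$ produces the claimed equality. This is the same thing as the isomorphism $e_j\Lambda_1 e_i\simeq e_{\tau j}\Lambda_1 e_{\tau i}$ noted before the corollary.

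The one place I would be most careful is tracking which idempotent sits on which side through the two applications of the form, so that the translations $\tau$ and $\tau^{-1}$ land on the correct vertices; this is the only potential source of error. Once the degree-one interpretation of ``arrow'' is fixed and the source/target conventions are pinned down, both parts are immediate consequences of Lemma~\ref{bil}, with no further computation required.
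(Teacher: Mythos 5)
Your proposal is correct and follows essentially the same route as the paper, which derives the corollary directly from Lemma~\ref{bil} via the unconditional identifications $e_{\tau^{-1} i}\Lambda_{l-1}e_j \simeq D\,e_j\Lambda_1 e_i$ and $e_j\Lambda_1 e_i \simeq e_{\tau j}\Lambda_1 e_{\tau i}$ stated just before the corollary; your chaining of the form at $t=1$ and $t=l-1$ is exactly the implicit argument there. Your handling of the degenerate case $e_j\Lambda_1 e_i=0$ matches the paper's reading of the lemma (both sides of the pairing vanish simultaneously), so nothing further is needed.
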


\section{Nakayama Translation and Orbit Algebra}
Following \cite{arsc}, a $k$-additive category $\mathcal C$ is called {\em locally bounded}, if it satisfies the following conditions:
\begin{enumerate}
 \item For each object $P$ in $\mathcal C$, $\mathrm{End}(P)$ is local;
 \item For each pair  $P, P'$ of objects in $\mathcal C$, $\mathrm{dim}_k\,\mathrm{Hom}(P,P')$ is finite;
 \item Distinct objects in $\mathcal C$ are non-isomorphic;
 \item  For each object $P$ in $\mathcal C$, there are only finite many  object $P'$ in $\mathcal C$ such that $\mathrm{Hom}(P,P')\neq 0$ or $\mathrm{Hom}(P',P)\neq 0$.
\end{enumerate}

Let $\mathcal{C,D}$ be locally bounded categories.
A $k$-linear functor $F:\mathcal{C} \to \mathcal{D}$ is called a {\em covering functor} if,
\begin{enumerate}
 \item $F$ is surjective on objects,
 \item For each object $P$ in $\mathcal C$, $F$ induces isomorphisms $$\bigoplus_{P'\in F^{-1}(Q)}\mathrm{Hom}(P',P) \to \mathrm{Hom}(Q,F(P)) $$ and $$\bigoplus_{P'\in F^{-1}(Q)}\mathrm{Hom}(P,P') \to \mathrm{Hom}(F(P),Q).$$
\end{enumerate}
If a group $G$ of $k$-automorphisms on $\mathcal C$ acts freely on the objects and  $\mathcal D$ is equivalent to its orbit category, then we call the covering $F$ regular (or Galois) and call $G$ its group.

Let $\Lambda$ and $\Lambda'$ be two $k$-algebras.
If there are locally bounded categories $\mathcal{C,D}$ such that with the naturally defined multiplications
$$\Lambda \simeq \bigoplus_{P, P'\in \mathcal C} \mathrm{Hom}(P,P') \quad \mbox{and } \quad \Lambda' \simeq \bigoplus_{D, D'\in \mathcal D} \mathrm{Hom}(D,D') $$
as algebras and there is a covering functor $F: \mathcal{C} \to \mathcal{D}$, then we say that $F$ is {\em a covering} from $\Lambda$ to $\Lambda'$.
If $F$ is regular with group $G$, we called covering from $\Lambda$ to $\Lambda'$ regular with group $G$.

Let $\mathcal P$ be a small $k$-additive category.
Let $G$ be a group of autofunctors of $\mathcal P$ and let $M$ be an  object in $\mathcal P$.
Assume that $G$ acts freely on the objects.
Define the {\em orbit algebra} $O(G, M)$ of $M$ with respect to $G$ to be the vector space
$$O(G, M)=\bigoplus_{F \in G} \mathrm{Hom}_{} (F M, M)$$
with the multiplication defined as follows:
For $f \in \mathrm{Hom}_{} (F M, M)$, $g \in \mathrm{Hom}_{} (F' M, M)$, with $F, F' \in G$,
$$f \cdot g = f  \circ  F g.$$
$O(G, M)$ is an associative $k$ algebra.

Let $\Lambda$ be a  self-injective algebra and let $\mathcal N$ be the Nakayama functor on $\bmod \Lambda$.
$\mathcal N$ restricts to an auto-equivalence on the category $\mathcal P=\mathcal P(\Lambda)$ of projective $\Lambda$-modules and induces an automorphism $\nu$ on $\Lambda$.

Let $G$ be the group generated by $\mathcal N$, and assume that $G$ acts freely on the objects in $\mathcal P $. An object $P$ of $\mathcal P$ is called {\em basic $G$-orbit generator} if  its indecomposable summands are taken from different orbits and we have $\mathrm{add}\, G P = \mathcal P$ for its orbit $G P =\{\mathcal N^t P| t\in \mathbb Z\}$.
Let $P=\bigoplus_{i\in I} P_i$ be a basic $G$-orbit generator.
Clearly, $O(G, P)=\mathrm{End} P$ when $G$ is trivial.
Denote by $\Lambda^T=O(G, P)$ the orbit algebra of $P$ with respect to $G$.

The following theorem tells us that $\mathcal N$ induces a covering from $\Lambda$ to the twisted orbit algebra $\Lambda^T = O(G, P)$.

\begin{thm}\label{covA} Assume that $G$ acts freely on the indecomposable objects in $\mathcal P $. Then $\mathcal N $ induces a regular covering  $N$ from $ \Lambda $ to $\Lambda^T$ with the group $G$, $\Lambda^T$ is a graded self-injective algebra whose Nakayama functor induces trivial Nakayama translation on the bound quiver.
\end{thm}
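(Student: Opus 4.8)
The plan is to realize $\Lambda^T$ as the morphism algebra of the orbit category $\mathcal P/G$ and to take $N$ to be the canonical projection, so that the covering assertion becomes essentially formal, and then to read off self-injectivity and the triviality of the Nakayama translation from the bound quiver of the quotient.

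First I would identify the two algebras with morphism algebras of locally bounded categories. Taking $\mathcal C$ to be (a skeleton of) the category of indecomposable projective $\Lambda$-modules gives $\Lambda \simeq \bigoplus_{X,Y}\hmm{\Lambda}{X,Y}$. For $\Lambda^T$ I would verify the identification $O(G,P) \simeq \bigoplus_{i,j\in I}\hmm{\mathcal P/G}{P_i,P_j}$, where the orbit category has $\hmm{\mathcal P/G}{X,Y} = \bigoplus_{F\in G}\hmm{\Lambda}{FX,Y}$; under this identification the orbit multiplication $f\cdot g = f\circ Fg$ is exactly composition in $\mathcal P/G$. Since $\{P_i\}_{i\in I}$ represents the $G$-orbits and $G$ acts freely, the $P_i$ form a complete set of indecomposables of $\mathcal P/G$. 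The only point needing care is that each $\mathrm{End}_{\mathcal P/G}(P_i)=\bigoplus_{F}\hmm{\Lambda}{FP_i,P_i}$ is local, and this holds precisely because free action forces $FP_i\not\simeq P_i$ for $F\neq 1$, so every component with $F\neq 1$ lies in the radical. With $\mathcal D=\mathcal P/G$ set up this way, $N:\mathcal P\to\mathcal P/G$ is surjective on objects and the required $\mathrm{Hom}$-isomorphisms hold by the very definition of the orbit Homs; freeness makes $\mathcal D$ the orbit category, so $N$ is regular with group $G$.

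For self-injectivity and the Nakayama translation I would pass to bound quivers. As $\Lambda$ is graded self-injective, Theorem~\ref{iffsi} gives that $Q_\Lambda$ is stable of Loewy length $l+1$ with Nakayama translation $\tau$, and $\mathcal N$ acts on vertices as $\tau$ (from $\mathcal N P_i=I_i\simeq P_{\tau i}$); hence $G=\langle\tau\rangle$. Free action lets me form the quotient bound quiver $Q_\Lambda/G$ with the images of the relations, and standard covering theory identifies $k(Q_\Lambda/G)$ with $\Lambda^T$. It then remains to check that $Q_\Lambda/G$ is again stable of Loewy length $l+1$. Using unique lifting of paths under the covering (after fixing a lift of the source), maximal bound paths downstairs correspond to maximal bound paths upstairs, which yields conditions (1) and (4); condition (3) transports because a length-$l$ bound path $\bar i\to\bar j$ lifts to one $i'\to j'$ with $j'=\tau^{-1}i'$, whence $\bar j=\bar i$. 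The decisive point is condition (2) together with the triviality claim: upstairs there is a maximal bound path from $\tau i$ to $i$, and since $\tau\in G$ we have $\overline{\tau i}=\bar i$, so this path descends to a maximal bound path from $\bar i$ to $\bar i$. Thus the Nakayama translation of $Q_\Lambda/G$ fixes every vertex, i.e.\ it is trivial, and Theorem~\ref{iffsi} yields that $\Lambda^T=k(Q_\Lambda/G)$ is graded self-injective (with the path-length grading) and weakly symmetric.

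I expect the main obstacle to be the bookkeeping that makes the passage to the quotient bound quiver legitimate: checking that $O(G,P)$ really is the bound quiver algebra of $Q_\Lambda/G$ with the induced relations, and that maximal bound paths together with their linear-dependence relations descend without the creation of spurious longer bound paths. All of these rest on the unique path-lifting property of the covering, which is exactly where the hypothesis that $G$ acts freely is used; once freeness is in hand the four stable-quiver conditions transport essentially mechanically.
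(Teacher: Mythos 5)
Your covering half is essentially the paper's argument: the paper also realizes $\Lambda^T=O(G,P)$ as the morphism algebra of the orbit category $\mathcal D$ with $\mathrm{Hom}([P_j],[P_i])=\bigoplus_{\mathcal N^t\in G}\mathrm{Hom}_\Lambda(\mathcal N^t P_j,P_i)$ and takes $N$ to be the canonical projection, checking the Hom-isomorphisms directly from this definition. Where you genuinely diverge is the second half. The paper never passes to the quotient bound quiver: it proves self-injectivity and weak symmetry in one stroke by a duality computation, using the bilinear form of Lemma~\ref{bil} to get $\mathrm{Hom}_\Lambda(P',P)_t\simeq D\,\mathrm{Hom}_\Lambda(\mathcal N P,P')_{l-t}$ and then summing over the orbit to conclude $\Lambda^T e_{[P]}\simeq D\,e_{[\mathcal N P]}\Lambda^T=D\,e_{[P]}\Lambda^T$; since $[\mathcal N P]=[P]$, every indecomposable projective is injective with top and socle at the same vertex, so the Nakayama permutation is the identity. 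Your route instead forms $Q_\Lambda/G$ and transports the four stability conditions of Theorem~\ref{iffsi} by unique path-lifting. This works, but the ``bookkeeping'' you flag is the real content and is slightly more than bookkeeping: $\mathcal N$ a priori induces only an algebra automorphism $\nu$ of $\Lambda$, which acts linearly on the arrow spaces $e_j\mathbf r/\mathbf r^2 e_i$ and need not permute any given choice of arrows, so ``$k(Q_\Lambda/G)$ with induced relations'' is not automatic — indeed the paper pointedly calls $\Lambda^T$ a \emph{twisted} orbit algebra, and later (Theorem~\ref{TE}) a twist $\sigma$ genuinely appears in $\Lambda^T\simeq\iota_\sigma(\Lambda^N)$. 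To make your presentation step legitimate you must first normalize: because $G$ acts freely on vertices it acts freely on pairs $(i,j)$, so you can choose arrow bases on orbit representatives and transport them by $\nu$ to obtain a $G$-permuted arrow set with $\nu$-stable relation ideal, after which Green-style Galois covering theory gives $\Lambda^T\simeq k(Q_\Lambda/G)$ and your lifting arguments for conditions (1)--(4) go through; your key observation that $\overline{\tau i}=\bar i$ forces the quotient translation to be trivial is exactly right. The trade-off: your approach yields an explicit quiver presentation of $\Lambda^T$ at the cost of this equivariant normalization, while the paper's Hom-duality computation is presentation-free and insensitive to the twist.
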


\begin{proof}
Let $\mathcal C =\mathrm{ind} \mathcal P$ be the category with objects the non-isomorphic indecomposable projective $\Lambda$-modules, with usual sets of $\Lambda$-maps as the sets of morphisms.
Let $P = \sum_{i\in I} P_i$ be a basic $G$-orbit generator of $\mathcal P(\Lambda)$, with $P_i$ indecomposable.
Then $\{P_i  | i\in I\}$ is a set of chosen representatives of the $G$-orbits of the indecomposable projective $\Lambda$-modules.
$$
\arr{lcl}{
\mathrm{Hom}_{\Lambda} (\Lambda, \Lambda)
&=&\bigoplus_{\mathcal N^t , \mathcal N^{t'} \in G} \bigoplus_{i,j \in I}\mathrm{Hom}_{\Lambda} ( \mathcal N^t P_i,\mathcal N^{t'} P_j)
}.$$

Let $\mathcal D$ be the category with objects the $G$-orbits $\{[P_i]|i \in I\}$ of indecomposable projective $\Lambda$-modules, with hom-sets $$\arr{lll}{\mathrm{Hom}([ P_j],[  P_i]) &=& \bigoplus_{P'' \in [ P_j]} \mathrm{Hom}_{\Lambda} ( P'',  P_i)\\ &=& \bigoplus_{\mathcal N^t  \in G} \mathrm{Hom}_{\Lambda} ( \mathcal N^t P_j,  P_i).}$$
The composition of the morphisms is defined as follows:
\begin{equation}f \cdot g =   f \circ  \mathcal N^t g  \end{equation}
for $f \in \mathrm{Hom}_{\Lambda} ( \mathcal N^t P_j, P_i), g \in  \mathrm{Hom}_{\Lambda} ( \mathcal N^{t'} P_k, P_j)$.
Clearly, $\mathrm{Hom}([ P_j],[  P_i]) \simeq \bigoplus_{\mathcal N^t \in G} \mathrm{Hom}_{\Lambda} (P_j,  \mathcal N^t  P_i)$.
Define
$$N(f) =  \mathcal N^{-t'} f  $$
for $f \in  \mathrm{Hom}_{\Lambda} (\mathcal N^t P_j,\mathcal N^{t'} P_i)$.
This is a $k$-homomorphism from $\mathrm{Hom}_{\mathcal D} (\mathcal N^t P_j,\mathcal N^{t'} P_i)$ to $\mathrm{Hom}_{\mathcal D} ([ P_j],[ P_i])$, and it is  easy to see that $N$ is a functor from $\mathcal C$ to $\mathcal D$. So $N$ is a covering from  $\mathcal C$ to $\mathcal D$.

It follows directly from the definition that $$\arr{ccl}{\bigoplus_{ \mathcal N^t  \in G}  \bigoplus_{i,j \in I}  \mathrm{Hom}_{\Lambda} ( \mathcal N^t P_j, P_i) &=&\bigoplus_{\mathcal N^t \in G} \mathrm{Hom}_{\Lambda} ( \mathcal N^t \bigoplus_{i \in I} P_i, \bigoplus_{i \in I} P_i)\\ &=&\bigoplus_{\mathcal N^t \in G} \mathrm{Hom}_{\Lambda} (  \mathcal N^t P,P)\\ &\simeq&  \mathcal O(G,P) = \Lambda^T  ,}$$
by comparing their composition laws.
So $\Lambda$ is a regular covering of $\Lambda^T $ with group $G$.

Write $e_{P}$ for the identity for an object $P$, then $P= \Lambda e_{P}$ for an indecomposable object in $\mathcal C$, and $[P]= \Lambda^T e_{[P]}$ for an indecomposable object in $\mathcal D$.
By Lemma ~\ref{bil}, $\mathrm{Hom}_{\Lambda} (P', P)_t = e_{P'} \Lambda_t e_{P} \simeq D e_{\mathcal N P}  \Lambda_{l-t} e_{P'} = D\mathrm{Hom}_{\Lambda} (\mathcal N P, P')_{l-t}$ for any $P, P'$, here $\mathrm{Hom}_{\Lambda} (P, P')_t$ denote the degree $t$ homomorphisms.
For an indecomposable $\Lambda^T$ module $\Lambda^T e_{[P]}$,
$$\arr{lll}{\Lambda^T e_{[P]}& =& \bigoplus_{[P']} \mathrm{Hom}_{\Lambda^T} (\Lambda^T,  \Lambda^T e_{[P]}) \\
&=&  \bigoplus_t \bigoplus_{[P']} \mathrm{Hom}_{\Lambda^T} (\Lambda^T e_{[P']},  \Lambda^T e_{P})_t\\
&=&   \bigoplus_t \bigoplus_{v} \bigoplus_{P_j} \mathrm{Hom}_{\mathcal C} (\Lambda e_{\mathcal N^v P'},  \Lambda e_{P})_t \\
&=&  \bigoplus_t \bigoplus_{v} \bigoplus_{P_j} D \mathrm{Hom}_{\mathcal C} ( e_{\mathcal N P}\Lambda,  e_{\mathcal N^v P'}\Lambda )_{l-t} \\
&\simeq &  D \bigoplus_t \bigoplus_{v} \bigoplus_{P_j} \mathrm{Hom}_{\mathcal C} ( e_{\mathcal N P}\Lambda, e_{\mathcal N^v P'}\Lambda  )_{l-t} \\
&\simeq &  D \bigoplus_t \bigoplus_{v} \bigoplus_{P_j} \mathrm{Hom}_{\mathcal C} ( e_{\mathcal N^{v+1} P}\Lambda, e_{ P'}\Lambda  )_{l-t} \\
&= &  D \bigoplus_t \bigoplus_{v} \bigoplus_{P_j} \mathrm{Hom}_{\mathcal D} ( e_{[\mathcal N P]}\Lambda^T, e_{[ P']}\Lambda^T  )_{l-t} \\
&=&  D e_{[\mathcal N P]}\Lambda^T, }$$
Thus $\Lambda^T e_{[P]}$ is injective, so $\Lambda^T$ is self-injective.
It also follows from this that the Nakayama permutation on the quiver of $\Lambda^T$ is identity.
\end{proof}

We see  that $\Lambda^T $ is a weakly symmetric algebra, and call it {\em the weakly symmetric algebra} of $\Lambda$.
Now we assume that the bound quiver $\Lambda$ is  $Q=(Q_0, Q_1, \rho)$.
Assume that $G$ act freely on $\Lambda$, that is,  $G$ acts freely on $\mathcal C$.
Then $\Lambda$ is a regular covering of $\Lambda^T$ with group $G$.
Let $Q^T $ be the quiver of $\Lambda^T$, then its vertex set is the set of the orbits of vertices of $Q_0$ under the Nakayama translation $\tau$ as the vertex set.
It follows from Lemma \ref{bil} that $dim_k\,  \Lambda_{1} e_i = dim_k\, \Lambda_{1} e_{\tau^r i}, $ and  $dim_k\, e_{ j} \Lambda_{1}  = dim_k\,e_{\tau^r j} \Lambda_{1} , $.
So the number of arrows starting or ending at each vertex of $Q$ and its images in $Q^T$
are the same.
We have the following description of unbounded quiver by the above theorem.

\begin{cor}\label{NKcov}
Assume that $G$ act freely on $\Lambda$.  As quivers, $Q$ is a regular covering of $Q^T$ with the group $G$ generated by the Nakayama translation.
\end{cor}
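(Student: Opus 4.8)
The plan is to deduce this quiver-level statement from the algebra-level regular covering already established in Theorem~\ref{covA}. Recall that, since $\Lambda$ is graded, the arrows of $Q$ from $i$ to $j$ form a basis of $e_j\Lambda_1 e_i$, so by Corollary~\ref{arrows} the number of arrows from $i$ to $j$ equals $\dim_k e_j\Lambda_1 e_i$, and likewise for $Q^T$ with $\Lambda^T$ in place of $\Lambda$. To prove that $Q$ is a regular covering of $Q^T$ with group $G$, I must exhibit (i) a surjective vertex map $Q_0\to Q_0^T$ realizing the quotient by the $G$-action, (ii) for each vertex, bijections between the arrows of $Q$ incident to it and the arrows of $Q^T$ incident to its image, and (iii) a free action of $G$ on $Q$ (vertices and arrows) with $Q^T\cong Q/G$. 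The vertex map is already at hand: $Q_0^T$ is by definition the set of $\tau$-orbits of $Q_0$, and $G=\langle\tau\rangle$ acts freely on $Q_0$ by hypothesis, so $i\mapsto[i]$ is the desired quotient map.

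For the arrows, I would pass to the degree-one part of the hom-space isomorphisms supplied by the covering functor $N$ of Theorem~\ref{covA}. Since $\mathcal N$ preserves the grading, $N$ is a graded functor, so the isomorphism $\bigoplus_t\mathrm{Hom}_\Lambda(\mathcal N^t P_j,P_i)\simeq\mathrm{Hom}_{\mathcal D}([P_j],[P_i])$ restricts in degree one to $\bigoplus_t\mathrm{Hom}_\Lambda(\mathcal N^t P_j,P_i)_1\simeq\mathrm{Hom}_{\mathcal D}([P_j],[P_i])_1$. Using $\mathcal N P_i\simeq P_{\tau i}$ and $\mathrm{Hom}_\Lambda(\Lambda e_a,\Lambda e_b)\simeq e_a\Lambda e_b$, the left-hand summand is $e_{\tau^t j}\Lambda_1 e_i$, whose dimension is the number of arrows of $Q$ from $i$ to $\tau^t j$, while the right-hand side counts the arrows of $Q^T$ from $[i]$ to $[j]$. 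Reading off these dimensions, the arrows of $Q$ issuing from the fixed vertex $i$, grouped according to the $\tau$-orbit of their target, biject with the arrows of $Q^T$ issuing from $[i]$; this is the required local bijection on outgoing arrows. The second hom-isomorphism in the definition of a covering functor yields, in exactly the same way, the matching bijection on the arrows entering $i$.

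Finally I would record the $G$-action and identify the quotient. The Nakayama translation induces an action of $G$ on $Q$ sending an arrow $i\to j$ to an arrow $\tau i\to\tau j$; by Corollary~\ref{arrows}(2) this is compatible with arrow multiplicities, and since a nontrivial power $\mathcal N^t$ moves every vertex it moves the source of every arrow, so $G$ acts freely on arrows as well. To see $Q^T\cong Q/G$, observe that every $G$-orbit of arrows from $[i]$ to $[j]$ has a unique representative whose source is the chosen vertex $i$ (by freeness of the $G$-action on the orbit $[i]$), so these orbits are in bijection with the arrows of $Q$ from $i$ to the various $\tau^t j$, which the previous paragraph matched with the arrows of $Q^T$ from $[i]$ to $[j]$. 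Thus the quiver morphism $i\mapsto[i]$ presents $Q^T$ as $Q/G$ and is a regular covering with group $G$. The main obstacle here is bookkeeping rather than substance: one must verify that the covering-functor isomorphisms of Theorem~\ref{covA} are genuinely graded, so that restriction to degree one is legitimate, and keep careful track of the convention fixing how $\mathcal N$ (hence $\tau$) acts on vertices, so that the incoming and outgoing bijections line up correctly with the two halves of the covering-functor definition.
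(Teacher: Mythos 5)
Your proposal is correct and takes essentially the same route as the paper: the paper likewise deduces the corollary from the algebra-level regular covering of Theorem~\ref{covA} together with the $\tau$-invariance of degree-one dimensions coming from Lemma~\ref{bil} (equivalently Corollary~\ref{arrows}), identifying the vertices of $Q^T$ with the $\tau$-orbits and matching arrow multiplicities at corresponding vertices. You merely spell out details the paper leaves implicit, namely the degree-one restriction of the covering-functor isomorphisms and the freeness of the $G$-action on arrows.
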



\medskip


\section{Smash Product $\Lambda \# k \mathbb Z^*$, Separated Directed Quiver and Special Truncated Quiver}

Covering theory is very important in representation theory, especially in the study of self-injective algebra \cite{hwu,sy,oty}.
In this section, we study a  universal covering for a graded self-injective algebra $\Lambda$, its smash product with the infinite cyclic group $\mathbb Z$.
We describe the bound quiver of the smash product and study its properties.

Starting with a stable bound quiver $Q=(Q_0,Q_1,\rho)$ of Loewy length $l+1$, say, of $\Lambda$, we construct a directed quiver $(\overline{Q}_0,\overline{Q}_1) $ as follow.
\begin{itemize}
\item[]Vertex set: $$\overline{Q}_0=\{(i,n) | n \in \mathbb Z,  i \in Q_0\};$$
\item[]Arrow set: $$\overline{Q}_1 = \{(\alpha, n):(i,n) \to (j,n+1)| n \in \mathbb Z, \alpha: i \to j \in Q_1\}.$$
\end{itemize}
If $p = \alpha_s \cdots \alpha_1 $ is a path in $Q$, define $p[n] = (\alpha_s,n+s-1) \cdots (\alpha_1,n)$ for each $n \in \mathbb Z$.
Define relations
$$\overline{\rho} =\{\zeta[n]| \zeta \in \rho, n \in \mathbb Z \}$$
here $\zeta[n] = \sum_{t} a_t p_t[n]$ for each $\zeta = \sum_{t} a_t p_t \in \rho $. $\overline{Q}= (\overline{Q}_0,\overline{Q}_1, \overline{\rho})$ is a locally finite bound quiver. We call $\overline{Q}$ the {\em  separated directed quiver} of the stable bound quiver $Q$. We show that this bound quiver gives exactly the smash product of  $\Lambda$ with an infinite cyclic group $\mathbb Z$.

Recall that the smash product $\Lambda \# k \mathbb Z^*$ of $\Lambda$ with $\mathbb Z$ is the free $\Lambda$ module with basis $\mathbb Z^* = \{\delta_n | n \in \mathbb Z\}$, with multiplication defined by
$$x\delta_ny\delta_m = xy_{n-m}\delta_m $$
for $x,y \in \Lambda$, $y=\sum_{t=0}^l y_t$ with $y_t \in \Lambda_t$.
This is an infinite dimensional algebra without the unit.

Since $\delta_n$ centralize $\Lambda_0$, if $\{e_i| i \in Q_0\}$ is a complete sets of orthogonal idempotents of $\Lambda$, $\{e_i\delta_n| i \in Q_0, n \in\mathbb Z\}$ is a complete sets of orthogonal idempotents of $\Lambda \# k \mathbb Z^*$.

Assume that $0\neq x\in e_j \Lambda_t e_i$,  $0 \neq y \in e_{j'} \Lambda_{t'} e_{i'}$ are homogeneous elements of degree $t$ and $t'$, respectively, then $ y \delta_{m} x \delta_n = y x  \delta_n\neq 0$ if and only if $j=i', 0\neq yx$ and $m=n+t$.
Especially, if $0\neq x\in e_j \Lambda_1 e_i$ is homogeneous element of degree $1$ in $\Lambda$, then $e_{j'}\delta_m x e_{i'}\delta_n = e_{j'} x e_{i'}\delta_n\neq 0$ if and only if
$i=i', j=j', n=m'$ and $m=n+1$.
We see that $\Lambda \# k \mathbb Z^*$ is a locally finite-dimensional algebra, and $(\Lambda_1+ \cdots +\Lambda_l)k \mathbb Z^*$ is a nil ideal and $\Lambda \# k \mathbb Z^*/(\Lambda_1+ \cdots +\Lambda_l)k \mathbb Z^*$ is semi-simple.
Write $e_i\delta_n$ as $e_{(i,n)}$, and for a homogeneous element $x \in \Lambda_t$, write $x\delta_n$ as $x[n]$.
We see that for $\alpha:i \to j \in Q_1$, $\alpha \delta_n = \alpha[n] :(i,n) \to (j,n+1)$ is an arrow in the quiver of $\Lambda \# k \mathbb Z^*$.
This shows that the Gabriel quiver of $\Lambda \# k \mathbb Z^*$ is exactly $(\overline{Q}_0, \overline{Q}_1 )$.


Clearly, if $\alpha_l\cdots \alpha_1 $ is a path in quiver $Q$, then $\alpha_l[n+l-1]\cdots \alpha_2[n+1]  \alpha_1[n]= \alpha_l\cdots \alpha_2\alpha_1[n] $ is a path in quiver $\overline{Q}$.
Also, we have that $\sum_s a_s p_s = 0 $ for paths  $p_s$ of $Q$ and $a_s \in k$ if and only if for all $n$, we have $\sum_s a_s p_s[n] = 0 $ in $\Lambda \# k \mathbb Z^*$.
Thus, a path $\alpha_l\cdots \alpha_1 $ is a maximal bound path of the bound quiver $Q$ if and only if for all $n$, $\alpha_l[n+l-1] \cdots \alpha_2[n+1]  \alpha_1[n]= \alpha_l\cdots \alpha_2\alpha_1[n] $ is a maximal bound path of  $\Lambda \# k \mathbb Z^*$.
So we see the maximal path in  $\Lambda \# k \mathbb Z^*$ starting at $(\tau i,n-l)$ ends at $(i,n)$.
Especially, maximal paths starting at the same vertex end at the same vertex, and they are all $k$-linear dependent.
Define $\overline{\tau} (i,n) = (\tau i, n-{l})$, this induces an automorphism on the bound quiver $\overline{Q}$.
This shows that $\Lambda \# k \mathbb Z^*$ is given by the relations $\overline{\rho} =\{\zeta[n]| \zeta \in \rho, n \in \mathbb Z \}$.
And $\overline{Q}= (\overline{Q}_0,\overline{Q}_1, \overline{\rho})$ is a stable quiver with the Nakayama translation $\overline{\tau}$.
So we have the following theorem.

\begin{thm}\label{lsquiver}
\begin{enumerate}
\item $\Lambda \# k \mathbb Z^*$ is a self-injective algebra of Loewy length $l+1$ with the bound quiver $\overline{Q} = (\overline{Q}_0, \overline{Q}_1, \overline{\rho})$.

\item The Nakayama translation $\overline{\tau}$ of  on $\overline{Q}$ is defined by $\overline{\tau} (i,n) = (\tau i, n-{l})$.

\item $\overline{Q}$ is a locally finite stable bound quiver of Loewy length $l+1$.
\end{enumerate}
\end{thm}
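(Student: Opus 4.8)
The plan is to reduce everything to Theorem~\ref{iffsi}. First I would identify $\Lambda\# k\mathbb Z^*$ with the bound quiver algebra $k(\overline{Q})$; then I would verify that $\overline{Q}$ is a locally finite stable bound quiver of Loewy length $l+1$ with Nakayama translation $\overline{\tau}(i,n)=(\tau i,n-l)$; and finally I would read off self-injectivity and the Loewy length from the characterization of Theorem~\ref{iffsi}, adapted to the locally finite setting. Most of the needed computations have in fact already been carried out in the paragraphs preceding the statement, so the proof is largely a matter of assembling them.

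For the identification I would argue as follows. Since the $\delta_n$ centralize $\Lambda_0$, the set $\{e_i\delta_n\}$ is a complete set of orthogonal idempotents, so the Gabriel quiver has vertex set $\overline{Q}_0$. The computation that $e_{j}\delta_m\, x\, e_i\delta_n = e_j x e_i\delta_n\ne 0$ for a degree-one $x\in e_j\Lambda_1 e_i$ precisely when $m=n+1$ shows that the arrows are exactly the $\alpha[n]\colon(i,n)\to(j,n+1)$ with $\alpha\colon i\to j$ in $Q_1$, that is, the set $\overline{Q}_1$. Finally, since $\sum_s a_s p_s=0$ in $\Lambda$ if and only if $\sum_s a_s p_s[n]=0$ in $\Lambda\# k\mathbb Z^*$ for every $n$, the kernel of the canonical surjection $k\overline{Q}\to\Lambda\# k\mathbb Z^*$ is generated by $\overline{\rho}$, giving $\Lambda\# k\mathbb Z^*\simeq k(\overline{Q})$.

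The heart of the argument is checking the four defining conditions of a stable bound quiver for $\overline{Q}$, each lifting the corresponding property of $Q$ through the correspondence $p\mapsto p[n]$. A length-$l$ path in $\overline{Q}$ starting at $(i,n)$ has the form $p[n]$ for a length-$l$ path $p$ in $Q$ and ends at $(j,n+l)$, where $j$ is the terminus of $p$; since $p[n]$ is a (maximal) bound path of $\Lambda\# k\mathbb Z^*$ exactly when $p$ is a (maximal) bound path of $Q$, condition~(1) and the linear dependence of condition~(4) are immediate. For conditions~(2) and~(3) the second coordinate does the bookkeeping: a maximal bound path starting at $\overline{\tau}(i,n)=(\tau i,n-l)$ necessarily ends in second coordinate $n$; by condition~(3) for $Q$ its terminus can only be $(i,n)$, and by condition~(2) for $Q$ such a path indeed exists. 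This simultaneously establishes part~(2), namely $\overline{\tau}(i,n)=(\tau i,n-l)$, and the stability asserted in part~(3) (that $\overline{\tau}$ is a quiver automorphism follows from the invariance of arrow numbers in Corollary~\ref{arrows}(2)); local finiteness of $\overline{Q}$ is clear, since each vertex $(i,n)$ has exactly as many incident arrows as $i$ has in the finite quiver $Q$.

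The step I expect to require the most care is the transfer of self-injectivity itself, because Theorem~\ref{iffsi} is proved for a finite-dimensional algebra whereas $\Lambda\# k\mathbb Z^*$ is only locally finite-dimensional and has no unit. I would handle this inside the locally bounded category of Section~3: each indecomposable projective $\Lambda\# k\mathbb Z^* e_{(i,n)}$ is finite-dimensional, and the socle/top computations in the proof of Theorem~\ref{iffsi} are performed vertex by vertex, so they localize without change. Concretely, the maximal bound paths starting at $(\tau i,n-l)$ span a one-dimensional socle isomorphic to $S_{(i,n)}$, whence $\mathrm{soc}\,P_{(i,n)}\simeq S_{\overline{\tau}^{-1}(i,n)}$ and $P_{(i,n)}\simeq I_{\overline{\tau}^{-1}(i,n)}$; applying this at every vertex yields the self-injectivity and the Loewy length $l+1$ asserted in part~(1).
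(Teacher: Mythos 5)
Your proposal is correct and takes essentially the same route as the paper: the paper's proof of Theorem~\ref{lsquiver} is precisely the discussion preceding its statement, which identifies the idempotents $e_i\delta_n$, the arrows $\alpha[n]$ and the relations $\overline{\rho}$ of $\Lambda\# k\mathbb Z^*$ with the data of $\overline{Q}$, transfers the stable bound quiver conditions through the correspondence $p\mapsto p[n]$, and then reads off the conclusion via the characterization of Theorem~\ref{iffsi}. Your extra care in adapting Theorem~\ref{iffsi} to the locally finite, non-unital setting (performing the socle/top computations vertex by vertex on the finite-dimensional projectives $P_{(i,n)}$) makes explicit a point the paper passes over silently, but it is a refinement of the same argument, not a different one.
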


We will write $\tau$ for $\overline{\tau}$ when no confusion appears.

The quiver of $\overline{Q}$ is different from the usual quiver $\mathbb Z Q$ used in representation theory of algebras, and it is usually not connected.
Assume that the lengths of minimal oriented cycles in $Q$ are $l_1,\ldots, l_r$,  and let $d = \mathrm{gcd} (l_1,\ldots,l_r)$ be their greatest common divisor.
The number of connected components of $\overline{Q}$ is given below.

\begin{prop}\label{conn} Let $Q$ be a finite connected stable quiver.
Then $\overline{Q}$ has $d$ connected components.

If $Q$ contains a loop, then $\overline{Q}$ is connected.
 \end{prop}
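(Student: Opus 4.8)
The plan is to exhibit $\overline{Q}\to Q$, $(i,n)\mapsto i$, as a covering of quivers with group $\mathbb{Z}$ acting by $(i,n)\mapsto (i,n+1)$, and to read off the number of its connected components from the ``degree'' data attached to this covering. Assign weight $1$ to every arrow of $Q$, and for a walk $w$ in $Q$ let $\partial(w)$ denote the number of arrows traversed forward minus the number traversed backward. The basic lifting observation is that a walk in $Q$ from $i$ to $j$ with $\partial(w)=\delta$ lifts, starting at $(i,n)$, to a walk in $\overline{Q}$ ending at $(j,n+\delta)$, and that every walk in $\overline{Q}$ projects to such a walk; hence two vertices $(i,n)$ and $(j,m)$ lie in the same connected component of $\overline{Q}$ if and only if there is a walk $w$ from $i$ to $j$ in $Q$ with $\partial(w)=m-n$. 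Since $Q$ is connected, such walks always exist, so the question reduces to determining which integers occur as values $\partial(w)$.

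First I would record that the set $D=\{\partial(w)\mid w \text{ a closed walk at a fixed vertex}\}$ is a subgroup of $\mathbb{Z}$, say $D=d'\mathbb{Z}$, that this subgroup is independent of the base vertex, and that the values $\partial(w)$ over all walks $i\to j$ form a single coset of $D$. It then follows from the previous paragraph that the number of connected components of $\overline{Q}$ equals $d'$ (and is infinite when $d'=0$). The substance of the argument is therefore to prove $d'=d$.

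The inclusion $d\mathbb{Z}\subseteq D$ is immediate: each minimal oriented cycle of length $l_k$ is a closed walk with $\partial=l_k$, so $D$ contains $l_1,\dots,l_r$ and hence their generated subgroup $d\mathbb{Z}$. For the reverse inclusion $D\subseteq d\mathbb{Z}$, I would first use Lemma \ref{path} to note that $Q$ is strongly connected: any two vertices are joined by a walk, hence by an oriented path, so every vertex lies on an oriented cycle and $d,d'\ge 1$. I then construct a level function $f\colon Q_0\to \mathbb{Z}/d\mathbb{Z}$ by fixing a base vertex $i_0$ with $f(i_0)=0$ and letting $f(i)$ be the length, taken modulo $d$, of any oriented path from $i_0$ to $i$. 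Well-definedness is the crux: given two oriented paths $i_0\to i$ of lengths $\ell_1,\ell_2$ and, by strong connectivity, an oriented path $i\to i_0$ of length $m$, the concatenations are oriented cycles of lengths $\ell_1+m$ and $\ell_2+m$, both lying in $d\mathbb{Z}$, forcing $\ell_1\equiv\ell_2\pmod d$. By construction $f(j)=f(i)+1$ in $\mathbb{Z}/d\mathbb{Z}$ for every arrow $i\to j$, so any walk $w$ from $i$ to $j$ satisfies $\partial(w)\equiv f(j)-f(i)\pmod d$; applied to closed walks this yields $D\subseteq d\mathbb{Z}$, completing $d'=d$ and proving that $\overline{Q}$ has exactly $d$ connected components.

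Finally, if $Q$ contains a loop then that loop is an oriented cycle of length $1$, so $1\in\{l_1,\dots,l_r\}$ and $d=1$; hence $\overline{Q}$ has a single component and is connected. The one point demanding genuine care is the well-definedness of $f$, i.e.\ that all oriented paths between a fixed ordered pair of vertices have lengths congruent modulo $d$; everything else is bookkeeping with the lifting correspondence and with subgroups of $\mathbb{Z}$.
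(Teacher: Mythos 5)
Your proof is correct, and it takes a genuinely different route from the paper's. The paper proves the proposition via Lemma~\ref{inonecom} by direct walk manipulation: for necessity it uses Lemma~\ref{path} to straighten a walk in $\overline{Q}$ into oriented paths through a common vertex, then decomposes the resulting closed oriented paths into minimal cycles to get $d\mid m'-m''$; for sufficiency it writes $m'-m''=\sum_t s_tl_t$ and exhibits the explicit walk $p_r^{-1}q_r^{s_r}p_r\cdots p_1^{-1}q_1^{s_1}p_1$ threading each minimal cycle. You instead package $(i,n)\mapsto i$ as a $\mathbb{Z}$-covering, reduce the component count to the index of the subgroup $D=d'\mathbb{Z}$ of degrees of closed walks, and prove $D=d\mathbb{Z}$ with a level function $f\colon Q_0\to\mathbb{Z}/d\mathbb{Z}$ --- the classical period argument for strongly connected digraphs. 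The essential inputs coincide: both proofs use Lemma~\ref{path} (you, to get strong connectivity of $Q$, hence well-definedness of $f$ and $d\ge 1$; the paper, to straighten walks inside $\overline{Q}$), and both rely, with equal terseness, on the standard fact that a closed oriented walk decomposes into minimal cycles so that its length lies in $d\mathbb{Z}$ (your ``both lying in $d\mathbb{Z}$'', the paper's ``are got from oriented cycles of $Q$''). Your formulation buys two things: the potential function disposes of arbitrary unoriented walks in one stroke, so you never need to apply the straightening machinery to the infinite quiver $\overline{Q}$ itself (the paper invokes Lemma~\ref{path}, stated for finite quivers, inside $\overline{Q}$, which strictly requires a small extra argument), and the subgroup/coset bookkeeping delivers the same-component criterion of Lemma~\ref{inonecom} and the count of $d$ components simultaneously. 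What the paper's version buys is constructiveness: the walk through the cycles $q_t$ is completely explicit. The loop case is handled identically in both arguments, via $d=1$.
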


The proposition follows easily from the following lemma.

\begin{lemma}\label{inonecom}
For any vertex $j$ of $Q$, $(j,m')$ and $(j,m'')$ are in the same connected component of $\overline{Q}$ if and only if $m'-m''\equiv 0\bmod d$.
\end{lemma}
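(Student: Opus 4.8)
The plan is to characterize exactly when two vertices $(j,m')$ and $(j,m'')$ lie in the same connected component of $\overline{Q}$, and show this happens precisely when $m'-m''$ is divisible by $d=\mathrm{gcd}(l_1,\dots,l_r)$. First I would observe the basic mechanics of the construction: each arrow $(\alpha,n)\colon (i,n)\to(j,n+1)$ raises the second coordinate by exactly one, so a \emph{path} of length $s$ in $\overline{Q}$ always shifts the second coordinate by exactly $s$. Consequently, if $p_1,\dots,p_r$ is a walk in $Q$ connecting $j$ to $j$, the net change in the second coordinate along the corresponding walk in $\overline{Q}$ is $\sum_t \pm l(p_t)$, where the signs record whether each path is traversed forwards or backwards. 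Thus the set of achievable second-coordinate differences between two lifts of the same vertex $j$ is exactly the subgroup of $\mathbb Z$ generated by the signed lengths of closed walks through $j$.

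The key step is to identify that subgroup as $d\mathbb Z$. For the inclusion that every reachable difference is a multiple of $d$: since $Q$ is finite and connected, every closed walk decomposes (up to signs) into contributions measured against the minimal oriented cycles, whose lengths $l_1,\dots,l_r$ all divide into any closed-walk length-difference as a $\mathbb Z$-linear combination; hence every difference lies in $\langle l_1,\dots,l_r\rangle = d\mathbb Z$. For the reverse inclusion, that $d$ itself (and hence every multiple) is realizable: because $d=\gcd(l_1,\dots,l_r)$, Bézout gives $d=\sum_i a_i l_i$ with $a_i\in\mathbb Z$, so I need to realize each minimal cycle length $l_i$ as a signed second-coordinate shift of a closed walk at the \emph{fixed} vertex $j$. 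This is where I would invoke connectedness together with Lemma~\ref{path}: for any minimal cycle $c_i$ based at some vertex $v_i$, I can connect $j$ to $v_i$ by a path in $Q$ (Lemma~\ref{path} guarantees an actual path between walk-connected vertices), traverse $c_i$, and return, producing a closed walk at $j$ whose net shift is $\pm l_i$; taking the appropriate signed combination produces a walk at $j$ realizing $d$.

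Having established that the achievable differences at a fixed vertex $j$ are exactly $d\mathbb Z$, the ``if and only if'' follows: $(j,m')$ and $(j,m'')$ are connected iff $m'-m''\in d\mathbb Z$, which is the statement $m'-m''\equiv 0\bmod d$. For the proposition itself, I would then count components. Since $Q$ is connected, any two vertices $i,j$ are joined by a walk, so every $(i,n)$ is connected to some lift of a fixed base vertex $j$; hence the components of $\overline{Q}$ are in bijection with the residue classes of second coordinates of lifts of $j$ modulo $d$, giving exactly $d$ components. The loop statement is the special case: a loop is a minimal cycle of length $1$, forcing $d=1$ and a single component.

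The main obstacle I anticipate is the realizability half of the subgroup computation, namely the careful use of Lemma~\ref{path} to transport each minimal cycle $c_i$ to a closed walk based at the \emph{single} fixed vertex $j$ while keeping track of signs, so that the Bézout combination $d=\sum_i a_i l_i$ genuinely assembles into one closed walk at $j$ with net shift $d$. The forward direction and the final component count are routine once the group-theoretic core is in place; the delicate bookkeeping is ensuring that traversing connecting paths out to $v_i$ and back cancels in the second coordinate except for the intended cycle contribution.
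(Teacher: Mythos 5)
There is a genuine gap, and it sits exactly in your ``only if'' direction. You assert that ``every closed walk decomposes (up to signs) into contributions measured against the minimal oriented cycles,'' so that the signed length of any closed walk at $j$ lies in $\langle l_1,\dots,l_r\rangle = d\mathbb Z$. That is false for general finite connected quivers: a walk may traverse arrows backwards, and an undirected closed walk need not decompose into oriented cycles. For instance, take vertices $a,b,c$ with arrows $\alpha\colon a\to b$, $\gamma\colon a\to c$, $\delta\colon c\to b$, together with an oriented triangle $b\to d\to e\to b$ attached at $b$; then the only minimal oriented cycle has length $3$, so $d=3$, yet the closed walk $\alpha^{-1}\delta\gamma$ at $a$ has signed length $2-1=1$, and correspondingly $(a,0)\to(c,1)\to(b,2)\leftarrow(a,1)$ puts $(a,0)$ and $(a,1)$ in the same component of $\overline{Q}$. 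In general the subgroup of achievable shifts at $j$ is the image of the first homology of the underlying graph under the length homomorphism, which can be strictly larger than the subgroup generated by oriented cycle lengths. Your argument nowhere uses that $Q$ is a \emph{stable} bound quiver, and that hypothesis is precisely what makes the inclusion true.

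The paper closes this hole with Lemma \ref{path}, i.e., with stability: since every arrow embeds in a maximal bound path and $\tau$ is periodic, walk-connected vertices are joined by honest directed paths. Concretely, from the assumption that $(j,m')$ and $(j,m'')$ lie in one component, the paper produces a vertex $(i,n)$ with directed paths from $(i,n)$ to both $(j,m')$ and $(j,m'')$, and a directed path from some lift $(j,m)$ to $(i,n)$; the two composite directed paths project to \emph{oriented} closed walks at $j$ in $Q$, of lengths $m'-m$ and $m''-m$, and oriented closed walks really do decompose into minimal oriented cycles, giving $d\mid m'-m$ and $d\mid m''-m$, hence $d\mid m'-m''$. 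Ironically, you invoke Lemma \ref{path} only in the realizability (``if'') direction, where it is dispensable: a mere walk $w_t$ from $j$ to the cycle $q_t$, traversed out and back as $w_t^{-1}q_t^{s_t}w_t$, already has its connecting contributions cancel --- this is exactly the paper's walk $p_r^{-1}q_r^{s_r}p_r\cdots p_1^{-1}q_1^{s_1}p_1$, and that half of your proposal is fine. To repair the proof, move Lemma \ref{path} to the ``only if'' half, replacing your unjustified decomposition claim with the reduction of the connecting walk to directed paths.
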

\begin{proof}
If both  $(j,m')$ and $(j,m'')$ are in the same connected component, then by Lemma ~\ref{path}, there is some vertex $(i,n)$ such that there is a path from $(i,n)$ to  $(j,m')$ and a path from $(i,n)$ to $(j,m'')$, and there is a path from $(j,m)$ to $(i,n)$ for some integer $n, m$.
Clearly, $m < n < \mathrm{min\,}\{m', m''\}$. We see that the paths from $(j,m)$ to $(j,m')$ and $(j,m'')$ are got from oriented cycles of $Q$, so $d| m'-m$ and  $d| m''-m$, so $d|m'-m''$ and $m'-m''\equiv 0\bmod d$.

Now assume that $m'- m'' \equiv 0\bmod d$, we may assume that $m'-m''=\sum_{t=1}^r s_tl_t$ for integers $ s_t, l_t \in \mathbb Z$. We can choose a vertex $i_t$ from each minimal oriented cycle $q_t$ and a path $p_t$ from $j$ to $i_t$.
The walk $$p_r^{-1}q_r^{s_r}p_r\cdots p_2^{-1}q_2^{s_2}p_2p_1^{-1}q_1^{s_1}p_1$$ in $Q$ gives rise to a walk in $\overline{Q}$ from $(j,m'')$ to $(j,m')$,  so $(j,m')$ and $(j,m'')$ are in the same connected component of $\overline{Q}$.
\end{proof}

Take a vertex $i \in Q_0$, denote by $(\overline{Q},i)$ the connected component of $\overline{Q}$ containing the vertex $(i,0)$.
If there exist paths $p,q$ of the same length  from $i$ and $i'$, respectively, to the same ending vertex $j$, then $(\overline{Q},i) = (\overline{Q},i')$. If there is an arrow $\alpha: i_0 \to j_0$ in $Q$, then we have an isomorphism from $(\overline{Q},i_0)$ to $(\overline{Q},j_0)$ sending $(i'',n)$ to $(i'', n-1)$.
Since $Q$ is connected, we have the following proposition.

\begin{prop}\label{cntiso}
All the connected components of $\overline{Q}$ are isomorphic.
\end{prop}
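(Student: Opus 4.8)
The plan is to exhibit explicit isomorphisms between the components by exploiting the translation symmetry of $\overline{Q}$ in the $\mathbb Z$-direction together with the connectedness of $Q$. First I would record the \emph{shift} $\sigma\colon \overline{Q}\to\overline{Q}$ given on vertices by $\sigma(i,n)=(i,n+1)$ and on arrows by $\sigma(\alpha,n)=(\alpha,n+1)$. Because the arrow set $\overline{Q}_1$ and the relation set $\overline{\rho}=\{\zeta[n]\mid \zeta\in\rho,\ n\in\mathbb Z\}$ are both defined uniformly over all $n\in\mathbb Z$, the map $\sigma$ is an automorphism of the bound quiver $\overline{Q}$; each power $\sigma^{m}$ is then an automorphism as well, and it carries the connected component containing any vertex $(j,0)$ isomorphically onto the component containing $(j,m)$.

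With $\sigma$ in hand, the argument splits into two reductions. Since every vertex of $\overline{Q}$ has the form $(j,m)$, every connected component $C$ contains such a vertex; applying $\sigma^{-m}$ gives an isomorphism $C\cong(\overline{Q},j)$, the component through $(j,0)$. Hence it suffices to prove $(\overline{Q},i)\cong(\overline{Q},i')$ for all $i,i'\in Q_0$. For this I would use the two observations recorded just before the statement: an arrow $\alpha\colon i\to j$ in $Q$ lifts to $(\alpha,0)\colon(i,0)\to(j,1)$, so $(j,1)$ lies in $(\overline{Q},i)$, and consequently $\sigma^{-1}$ restricts to an isomorphism $(\overline{Q},i)\to(\overline{Q},j)$ sending $(i'',n)\mapsto(i'',n-1)$. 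Its inverse $\sigma$ handles the same arrow traversed in the opposite direction, so either way one gets an isomorphism between the two components attached to the endpoints of a single arrow.

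Finally, because $Q$ is finite and connected, any two vertices $i,i'$ are joined by a walk $i=v_0,v_1,\dots,v_k=i'$ in which consecutive vertices are linked by an arrow in one direction or the other. Composing the per-arrow isomorphisms $(\overline{Q},v_{t})\cong(\overline{Q},v_{t+1})$, each a suitable power of $\sigma$, along the walk produces an isomorphism $(\overline{Q},i)\cong(\overline{Q},i')$. Combined with the first reduction, every component is isomorphic to a fixed $(\overline{Q},i_0)$, which is the assertion of the proposition.

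I anticipate that the only real care needed is bookkeeping rather than a genuine obstacle: one must track how each $\sigma^{\pm1}$ shifts the second coordinate so that the chained map is a bona fide quiver isomorphism, that is, bijective on the vertices and arrows of the two components and respecting $\overline{\rho}$ (which again is automatic from the uniformity of $\overline{\rho}$ over $n$). No independence-of-walk statement is required, only the existence of one such chain, which the connectedness of $Q$ guarantees.
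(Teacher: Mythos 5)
Your proof is correct and takes essentially the same route as the paper: the paper's argument (contained in the remarks immediately preceding the proposition) is exactly your per-arrow isomorphism $(\overline{Q},i_0)\to(\overline{Q},j_0)$, $(i'',n)\mapsto(i'',n-1)$, chained along a walk using the connectedness of $Q$. Your explicit introduction of the shift automorphism $\sigma$ and the reduction of an arbitrary component through $(j,m)$ to $(\overline{Q},j)$ only spell out bookkeeping the paper leaves implicit.
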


By definition, one sees easily that the following holds.
\begin{prop}\label{nocyc}
The quiver $\overline{Q}$ contains no oriented cycle.
\end{prop}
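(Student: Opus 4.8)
The plan is to exploit the rigid layered structure built into the very definition of $\overline{Q}$: every arrow raises the second coordinate by exactly one. Concretely, I would introduce the height function $h\colon \overline{Q}_0 \to \mathbb{Z}$ given by $h(i,n) = n$, and read off directly from the arrow set $\overline{Q}_1$ that any arrow $(\alpha,n)\colon (i,n)\to(j,n+1)$ satisfies $h(j,n+1) = h(i,n)+1$. This single observation is the whole engine of the proof.

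First I would verify that $h$ strictly increases along directed paths. If $p$ is any path of length $s\ge 1$ in $\overline{Q}$ from a vertex $v$ to a vertex $w$, composability of its constituent arrows forces the second coordinates to increase by one at each step, so that $h(w) = h(v)+s$. In particular $h(w) > h(v)$ whenever $s\ge 1$. I would then conclude immediately: an oriented cycle is by definition a directed path of length $s\ge 1$ from some vertex $v$ back to $v$ itself, and this would force $h(v) = h(v)+s$ with $s\ge 1$, an absurdity. Hence $\overline{Q}$ admits no oriented cycle.

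The only point worth flagging — and it is a clarification rather than an obstacle — is that the statement concerns the underlying directed graph $(\overline{Q}_0,\overline{Q}_1)$ alone, so the argument must, and does, use only the combinatorial definition of the arrow set and is entirely independent of the relations $\overline{\rho}$ and of the earlier stability hypotheses on $Q$. There is no case analysis, no appeal to Lemma~\ref{path} or to the Nakayama translation $\overline{\tau}$, and no subtlety in the periodicity of $\tau$; the height function disposes of everything in one stroke. I therefore expect this to be a short, self-contained verification with no genuine difficulty.
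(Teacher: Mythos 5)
Your proof is correct and is exactly the argument the paper has in mind: the paper states this proposition with only the remark ``By definition, one sees easily,'' and the intended reasoning is precisely your observation that every arrow $(\alpha,n)\colon (i,n)\to (j,n+1)$ raises the second coordinate by one, so the coordinate strictly increases along any nontrivial directed path, ruling out cycles. Your clarification that the claim concerns only $(\overline{Q}_0,\overline{Q}_1)$ and is independent of $\overline{\rho}$ is also apt.
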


\medskip

Fix a connected component $(\overline{Q},i_0)$, we now study its truncation  with respect to the Nakayama translation.

The quiver obtained by taking the vertex set  $$(Q^N,i_0)_0 =\{(j,n ) \in (\overline{Q},i_0)| 0 \le n \le l-1\}$$ together with all the arrows in $(\overline{Q},i_0)$ among these vertices is called a {\em special truncated quiver} of $\overline{Q}$ with $i_0$ as a source and is denoted by $(Q^N, i_0)$.

Note that for all $t$, we have that $(Q^N, i_0) \simeq (Q^N, \tau^t i_0) $. Let $d$ be the greatest common divisor of the lengths of the oriented cycles of $Q$.
Take a path $i_1 \to i_2 \to \ldots \to i_{d}$ of length $d-1$ in $Q$, then for any vertex $j$ of $Q$, we have that $(Q^N, j) \simeq (Q^N, i_t)$ for some $1 \le t \le d$.
So the union $\cup_{t} (Q^N, i_t)$ is independent of the choice of the path.
The full subquiver $Q^N$ of $\overline{Q}$ with the vertex set $$Q^N_{0}=\{(i,n)| i\in Q_0, 0 \le n \le l-1\}$$ is called {\em the total special truncated quiver} of $Q$.
We use the same notations $(Q^N, i_0)$ and $Q^N$ for the bound quivers with induced relations.
Clearly the following proposition holds.

\begin{prop}\label{sptrqv}
$Q^N$ is isomorphic to a disjoint union of  special truncated quivers $$Q^N \simeq  \cup_{1\le t\le d}  (Q^N, i_t),$$
for any path $i_1 \to i_2 \to \ldots \to i_{d}$ of length $d-1$ in $Q$.
\end{prop}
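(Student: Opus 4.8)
The plan is to realize both sides of the asserted isomorphism as subquivers of $\overline{Q}$ and to compare them through the decomposition of $\overline{Q}$ into connected components. First I would recall from Proposition \ref{conn} that $\overline{Q}$ has exactly $d$ connected components. By construction $Q^N$ is the full subquiver of $\overline{Q}$ cut out by the horizontal slab $\{(i,n)\mid 0\le n\le l-1\}$, while each $(Q^N,i_t)$ is the full subquiver of the single component $(\overline{Q},i_t)$ cut out by the same slab. Since distinct connected components of $\overline{Q}$ share neither vertices nor arrows, the whole statement reduces to showing that the $d$ components $(\overline{Q},i_1),\ldots,(\overline{Q},i_d)$ are pairwise distinct: once this is known they exhaust all the components of $\overline{Q}$, and intersecting the component decomposition with the slab yields $Q^N=\cup_{1\le t\le d}(Q^N,i_t)$ as a disjoint union.

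The key step, which I expect to be the main obstacle, is the distinctness of the components $(\overline{Q},i_1),\ldots,(\overline{Q},i_d)$. Here I would exploit the given path $i_1\to\cdots\to i_d$. For $s<t$ the subpath $i_s\to i_{s+1}\to\cdots\to i_t$ lifts, under the definition of the arrows of $\overline{Q}$, to a directed path $(i_s,0)\to(i_{s+1},1)\to\cdots\to(i_t,t-s)$, so the vertex $(i_t,t-s)$ lies in the component $(\overline{Q},i_s)$. If one had $(\overline{Q},i_s)=(\overline{Q},i_t)$, then $(i_t,t-s)$ and $(i_t,0)$ would lie in the same component, and Lemma \ref{inonecom} would force $d\mid(t-s)$; but $1\le t-s\le d-1$, a contradiction. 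Hence the $d$ components are pairwise distinct and therefore represent every connected component of $\overline{Q}$.

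Finally I would assemble the pieces. Every vertex $(i,n)\in Q^N_0$ is a vertex of $\overline{Q}$ and so lies in exactly one connected component, that is, in exactly one $(\overline{Q},i_t)$, whence in exactly one $(Q^N,i_t)$; thus the vertex sets partition as required. Likewise every arrow of $Q^N$ joins two vertices of a single component and is therefore recorded in the corresponding $(Q^N,i_t)$, and on both sides the relations are the ones induced from $\overline{\rho}$. This gives the equality $Q^N=\cup_{1\le t\le d}(Q^N,i_t)$ of bound quivers, and since the left-hand side does not depend on the path, the union is independent of the chosen path $i_1\to\cdots\to i_d$, as claimed.
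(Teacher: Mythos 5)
Your proof is correct and follows essentially the route the paper intends: the paper states Proposition~\ref{sptrqv} without an explicit argument (``Clearly the following proposition holds''), relying on Proposition~\ref{conn} and Lemma~\ref{inonecom}, which are exactly the tools you use to lift the subpaths $i_s\to\cdots\to i_t$ and conclude that $(\overline{Q},i_1),\ldots,(\overline{Q},i_d)$ are pairwise distinct and exhaust the $d$ connected components, after which intersecting with the slab $0\le n\le l-1$ gives the disjoint-union decomposition. Your write-up simply supplies the details the paper leaves implicit, including the independence of the choice of path.
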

The following is obvious.

\begin{prop}\label{same}
For any vertex $(j,0)$ of $(\overline{Q}, i_0)$, we have $(\overline{Q},i_0)=(\overline{Q},j)$ and $(Q^N, i_0) = (Q^N,j)$.
\end{prop}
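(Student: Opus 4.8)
The plan is to read both equalities directly off the definitions: the first from the fact that the connected components of $\overline{Q}$ partition its vertex set, and the second from the observation that a special truncated quiver depends only on its ambient connected component together with the fixed band of degrees $0 \le n \le l-1$.

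First I would establish $(\overline{Q},i_0)=(\overline{Q},j)$. By hypothesis the vertex $(j,0)$ lies in $(\overline{Q},i_0)$, the connected component of $\overline{Q}$ containing $(i_0,0)$. Since the connected components of a quiver form a partition of its vertex set, the unique component containing $(j,0)$ is $(\overline{Q},i_0)$ itself. But by definition $(\overline{Q},j)$ is precisely the component of $\overline{Q}$ containing $(j,0)$, so $(\overline{Q},i_0)=(\overline{Q},j)$ as (bound) quivers.

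Next I would deduce $(Q^N,i_0)=(Q^N,j)$. Recall that $(Q^N,i_0)$ is the full subquiver of $(\overline{Q},i_0)$ spanned by those vertices $(k,n)$ with $0\le n\le l-1$, equipped with all arrows of $(\overline{Q},i_0)$ among them and the relations induced from $\overline{\rho}$; the designation of $i_0$ as a source enters only through the component $(\overline{Q},i_0)$ and in no other way. Likewise $(Q^N,j)$ is the analogous truncation of $(\overline{Q},j)$. Having identified $(\overline{Q},i_0)=(\overline{Q},j)$ in the previous step, and since the truncation range $0\le n\le l-1$ is the same in both cases, the two subquivers have identical vertex sets and identical arrow sets; as the relations in each are obtained by restricting $\overline{\rho}$ to these common data, the bound quivers $(Q^N,i_0)$ and $(Q^N,j)$ coincide.

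I expect no genuine obstacle here: the statement is a formal consequence of the definitions, the only point requiring a moment's care being that a special truncated quiver is an intrinsic invariant of its connected component and of the chosen degrees, so it cannot depend on which level-zero vertex is named the source. This intrinsic character is exactly what yields an \emph{equality} rather than a mere isomorphism, in contrast to the relation $(Q^N,i_0)\simeq(Q^N,\tau^t i_0)$ recorded earlier, where the two truncations need not sit inside one and the same component.
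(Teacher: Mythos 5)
Your proof is correct and is exactly the definition-unwinding the paper has in mind: the paper itself records this proposition with the remark ``The following is obvious'' and supplies no argument, since $(\overline{Q},j)$ is by definition the component containing $(j,0)$ and the special truncated quiver depends only on that component and the degree band $0\le n\le l-1$. Your closing remark correctly distinguishes this equality from the mere isomorphism $(Q^N,i_0)\simeq(Q^N,\tau^t i_0)$, which is a nice touch but adds nothing essential.
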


Write $\mathcal{P} = \mathcal{P} (\Lambda \# k \mathbb Z^*)$ be the category of finitely generated indecomposable projective $\Lambda \# k \mathbb Z^*$-modules.
Let $P_{i,n} = \Lambda \# k \mathbb Z^* e_{(i,n)}$ be the indecomposable  projective $\Lambda \# k \mathbb Z^*$-modules corresponding to the vertex $(i,n)$.
Let $\Lambda(Q^N, i_0) =\mathrm{End}_{\mathcal{P} } \bigoplus_{(i,n) \in (Q^N, i_0)_0} P_{i,n} $.
$\Lambda(Q^N, i_0) $ is given by the bound quiver $ (Q^N, i_0)$.
Clearly, for all $t$, $\Lambda(Q^N, i_0) \simeq \Lambda(Q^N, \tau^t i_0) $.
An algebra $\Lambda(Q^N, i_0)$ is called a {\em special truncated algebra}.
If $Q$ is the bound quiver of a graded self-injective algebra $\Lambda$, we also call this algebra a {\em special truncated algebra of $\Lambda$}.

Fix a path $i_1 \to i_2 \to \ldots \to i_{d}$ of length $d-1$ in $Q$, write $$\Lambda^N =   \bigoplus_{1\le t\le d}  \Lambda(Q^N, i_t),$$
then up to isomorphism, $\Lambda^N $ is independent of the choice of the path, and it is given by the bound quiver $Q^N$.
We call this algebra {\em the total special truncated algebra}.

\section{The Beilinson Algebra, its Trivial Extension and Repetitive algebra}

In \cite{cx}, Chen introduces the Beilinson algebra and shows that the category of graded modules of a well graded self-injective algebra is equivalent to the category of the graded modules of the trivial extension of its Beilinson algebra.
Now we describe the Beilinson algebra of a graded self-injective algebra, its trivial extension and its repetitive algebra, using algebras and the bound quivers induced in the last section.

Let $\Lambda= \Lambda_0 + \Lambda_1 +\cdots + \Lambda_l$ be a basic graded self-injective algebra. The Beilinson algebra of $\Lambda$, defined in \cite{cx},
is the algebra the form $$b(\Lambda) = \matr{ccccc}{\Lambda_0 &\Lambda_1&\cdots  &\Lambda_{l-2} &\Lambda_{l-1} \\
0&\Lambda_0 &\cdots  &\Lambda_{l-3} &\Lambda_{l-2} \\ \vdots  &\vdots  & \ddots &\vdots  & \vdots \\
0 &0 & \cdots &\Lambda_0  & \Lambda_1\\
0& 0& \cdots& 0&\Lambda_0   \\
}. $$
We have the following theorem.

\begin{thm}\label{bei}
Let $Q$ be the bound quiver of a basic graded self-injective algebra $\Lambda$.
Then $Q^N$ is the bound quiver of its Beilinson algebra.
\end{thm}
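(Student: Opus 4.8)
The plan is to show that the Beilinson algebra $b(\Lambda)$ and the total special truncated algebra $\Lambda^N = k(Q^N)$ are isomorphic, by matching up idempotents, arrow spaces, and relations. The key observation is that $b(\Lambda)$ is presented as an upper-triangular matrix algebra whose $(s+1, s'+1)$ entry is $\Lambda_{s'-s}$ for $0 \le s \le s' \le l-1$. A natural complete set of orthogonal idempotents for $b(\Lambda)$ is indexed by pairs $(i, s)$ with $i \in Q_0$ and $0 \le s \le l-1$, where $(i,s)$ picks out the copy of $e_i \in \Lambda_0$ sitting in the $(s+1)$-st diagonal block. This index set is exactly $Q^N_0 = \{(i,n) \mid i \in Q_0,\, 0 \le n \le l-1\}$, so first I would set up the bijection $(i,s) \leftrightarrow (i,s)$ on vertices and verify it respects the idempotent decomposition.

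Next I would compute the radical-square-zero quotient, i.e. identify the arrows. The arrows of $b(\Lambda)$ correspond to a basis of $\mathbf{r}/\mathbf{r}^2$, and since the multiplication in the matrix algebra sends the $(s+1,s'+1)$ block into degree $s'-s$ of $\Lambda$, an arrow from the idempotent $(i,s')$ to $(j,s)$ (with $s < s'$) must come from degree-one elements, forcing $s' = s+1$ and corresponding to a basis element of $e_j \Lambda_1 e_i$. By Corollary~\ref{arrows}, a basis of $e_j \Lambda_1 e_i$ is exactly the set of arrows $\alpha: i \to j$ in $Q$. On the other side, by the construction of $\overline{Q}$ in Section 4, the arrows of $Q^N$ lying among the vertices with second coordinate in $\{0,\dots,l-1\}$ are precisely the $(\alpha, n): (i,n) \to (j,n+1)$ with $\alpha: i \to j \in Q_1$ and $0 \le n \le l-2$. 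So the arrow $(\alpha,s): (i,s) \to (j,s+1)$ matches the matrix arrow from $(i,s+1)$ to $(j,s)$ — here I must be careful that the arrow direction in the quiver corresponds to the transpose convention relating left modules over $\Lambda$ to the matrix algebra, so I would fix once and for all whether the Beilinson quiver reads the matrix indices up or down the diagonal, and align it with the $n \mapsto n+1$ convention of $\overline{Q}$.

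With vertices and arrows matched, I would then check the relations. A path $\alpha_s[n+s-1]\cdots \alpha_1[n]$ in $Q^N$ corresponds to the product $\alpha_s \cdots \alpha_1 \in \Lambda_s$ landing in the appropriate off-diagonal block of $b(\Lambda)$, and two such linear combinations agree in $b(\Lambda)$ if and only if the corresponding combination of paths vanishes in $\Lambda$ — which is exactly the condition defining the induced relation set on $Q^N$ (the truncation of $\overline{\rho}$, which by the displayed equivalence in Section 4 reads off $\rho$ degree by degree). Thus the assignment extends to a graded algebra isomorphism $k(Q^N) \cong b(\Lambda)$ identifying the degree-$t$ part of $\Lambda$ with the length-$t$ bound paths. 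The main obstacle I anticipate is purely bookkeeping but genuinely delicate: getting the indexing and the arrow orientation consistent between the three descriptions (matrix blocks, left-module homomorphisms, and the combinatorial quiver $\overline{Q}$), since an off-by-one or a transpose error here would invalidate the matching. Everything else is a routine consequence of Theorem~\ref{iffsi}, Corollary~\ref{arrows}, and the explicit relation description of $\overline{Q}$ already established.
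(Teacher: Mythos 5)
Your proposal is correct and follows essentially the same route as the paper's proof: the paper likewise decomposes $b(\Lambda)$ into graded matrix components $b(\Lambda)_0+\cdots+b(\Lambda)_{l-1}$, identifies the idempotents $e_{(i,n)}$ with the diagonal copies of the $e_i$, reads off the arrows from $\mathbf{r}b(\Lambda)/\mathbf{r}^2 b(\Lambda)$ (fixing the index convention you flag by placing $(\alpha,n)$ at position $(l+1-(n+1),\,l+1-n)$), and matches the relations via the equivalence $\sum_t a_t p_t[n]=0$ in $b(\Lambda)$ if and only if $\sum_t a_t p_t=0$ in $\Lambda$. One small quibble: the fact that the arrows $\alpha\colon i\to j$ of $Q$ give a basis of $e_j\Lambda_1 e_i$ is the standard consequence of the homogeneous presentation $\Lambda\simeq kQ/(\rho)$ rather than of Corollary~\ref{arrows} (which concerns $e_{\tau^{-1}i}\Lambda_{l-1}e_j$), but this mis-citation does not affect your argument.
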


\begin{proof} Since $\Lambda$ is naturally graded by the lengths of paths, $\Lambda_0$ is a vector spaces with the primitive idempotents (trivial paths) as its basis, and $\Lambda_t$ has a basis consisting of paths of
length $t$.

So we have that $$b(\Lambda) =b(\Lambda)_0 + b(\Lambda)_1 + \cdots +b(\Lambda)_{l-1}, $$ with
$$b(\Lambda)_t = \matr{cccccc}{0&\cdots &0&\Lambda_t &\cdots   &0\\
\cdot  &\cdots &\cdot &\cdot  & \cdots   & \cdot \\
0& \cdots &0  &0&\cdots   &\Lambda_t\\
0& \cdots & 0&0 &\cdots     &0 \\
\cdot  &\cdots &\cdot &\cdot  &  \cdots   & \cdot \\
0& \cdots & 0&0 &\cdots  &0   \\
}, $$
for $t=0,1\ldots,l-1$.
The Jacobson radical $\mathbf r b(\Lambda) = b(\Lambda)_1 + \cdots +b(\Lambda)_{l-1}$,   $ b(\Lambda)/\mathbf r b(\Lambda) \simeq \matr{cccc}{\Lambda_0 &0&\cdots  &0 \\
0&\Lambda_0 &\cdots  &0 \\
\vdots  & \vdots & \ddots &\vdots  \\
0&0&\cdots  &\Lambda_0\\
}$,  $\mathbf r b(\Lambda)/\mathbf r^2 b(\Lambda) \simeq \matr{cccc}{0&\Lambda_1 &\cdots  &0 \\
\vdots  & \vdots & \ddots &\vdots  \\
0&0&\cdots  &\Lambda_1\\
0&0&\cdots  &0\\
}.$
Denote by $e_{(i,n)}$ the matrix with a single idempotent $e_i$ of $\Lambda$ at the $(l+1-n,l+1-n)$ position, for $n=1, \ldots, l$, and denote by  $(\alpha,n)=\alpha[n]$ the matrix with a single arrow $\alpha$ at $(l+1-(n+1),l+1-n)$ position, for $n=1, \ldots, l-1$.
We see easily that the quiver of $b(\Lambda)$ is exactly the total special truncated quiver $Q^N$ of $Q$.
Regard the idempotents as trivial paths, denoted by $p[n]$ the matrix of a single path $p$ of length $t$ at position $(l+1-n+t, l+1-n)$, this is a path of length $t$ in $b(\Lambda)_t$.
So $p[n]= (\alpha_t,n+t-1)\cdots(\alpha_1,n)$ if $p=\alpha_t\cdots \alpha_1$.
Clearly $ \sum_{t} a_t p_t[n] =0 $ in $b(\Lambda)$ if and only if $ \sum_{t} a_t p_t=0$ in $\Lambda$.
So the relations of $b(\Lambda)$ are identified with those of $Q^N$, and this shows that the bound quiver of $b(\Lambda)$ is exactly the same as the total special truncated quiver $Q^N$. \end{proof}

This also shows that $b(\Lambda) \simeq \Lambda^N$.

\medskip

Consider the category $\mathrm{ind}\mathcal{P}= \mathrm{ind }\mathcal{P} (\Lambda \# k \mathbb Z^*)$.
Let $G$ be group generated by the Nakayama functor  $\mathcal N$, $G$ acts freely on the objects of $\mathrm{ind} \mathcal{P} $.
For any positive integer $r$, let $G(r) = (\mathcal N^r)$ be the subgroup of $G$ generated by $\mathcal N^r$, $G_r$ acts freely on the objects of $\mathrm{ind} \mathcal{P} $, too.
Let $\mathcal{P}$ be a finite generated basic $G_r$-orbit generator.
Let  $\Lambda^{T,r} = O(G_r , P_r)$ be the orbit algebra.
Similar to Theorem ~\ref{covA}, we see that $\Lambda^{T,r}$ is a graded self-injective algebra of Loewy length $l+1$ and $\Lambda \# k \mathbb Z^*$ is a Galois covering of $\Lambda^{T,r}$ with the group $G_r$.
Let $\Lambda^T= \Lambda^{T,1}$ and $Q^T=Q^{T,(r+1)}$, then $\Lambda^T = k(Q^T)$.
By Theorem ~\ref{covA}, we have

\begin{prop}\label{gcv1}
$\Lambda^{ T,r}$ is a regular covering of $\Lambda^T$ with the group $\mathbb Z / r\mathbb Z$.
\end{prop}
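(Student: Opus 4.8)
The plan is to apply Theorem \ref{covA} a second time, now with the graded self-injective algebra $\Lambda^{T,r}$ playing the role of $\Lambda$, and then to identify the weakly symmetric algebra it produces with $\Lambda^T$. Recall that, as noted just before the statement, $\Lambda^{T,r}$ is already known to be a graded self-injective algebra of Loewy length $l+1$ and $\Lambda\#k\mathbb{Z}^*$ is a Galois covering of it with group $G_r=(\mathcal N^r)$. So the only inputs I need are: the full Nakayama group of $\Lambda^{T,r}$ is $G/G_r\cong\mathbb{Z}/r\mathbb{Z}$, it acts freely on the indecomposable projectives, and the corresponding weakly symmetric algebra is $\Lambda^T$.

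First I would determine the Nakayama functor of $\Lambda^{T,r}$. The indecomposable projective $\Lambda^{T,r}$-modules are indexed by the $\overline{\tau}^{\,r}$-orbits $[(i,n)]_r$ of the vertices of $\overline{Q}$. Running the injective-envelope computation from the proof of Theorem \ref{covA} for the subgroup $G_r$ (rather than all of $G$) shows that $\mathcal N$ descends to the Nakayama functor of $\Lambda^{T,r}$, acting on vertices by $[(i,n)]_r\mapsto[\overline{\tau}(i,n)]_r=[(\tau i,n-l)]_r$. Since $\overline{\tau}^{\,r}$ is the identity on $G_r$-orbits, the group this translation generates is exactly $G/G_r\cong\mathbb{Z}/r\mathbb{Z}$. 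Freeness is then immediate from the second coordinate: $[\overline{\tau}^{s}(i,n)]_r=[(i,n)]_r$ means $\overline{\tau}^{\,s-rm}(i,n)=(i,n)$ for some $m$, and comparing second coordinates in $\overline{\tau}^{\,s-rm}(i,n)=(\tau^{s-rm}i,\,n-(s-rm)l)$ with $l\ge 1$ forces $s\equiv 0\pmod r$. Hence no nontrivial element of $G/G_r$ fixes a vertex, and the hypotheses of Theorem \ref{covA} hold for $\Lambda^{T,r}$, yielding a regular covering from $\Lambda^{T,r}$ onto its weakly symmetric algebra $(\Lambda^{T,r})^T=O(G/G_r,P')$ with group $\mathbb{Z}/r\mathbb{Z}$.

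It remains to identify $(\Lambda^{T,r})^T$ with $\Lambda^T$. Both are orbit algebras built from $\mathcal P(\Lambda\#k\mathbb{Z}^*)$: the former forms $G/G_r$-orbits of the $G_r$-orbits, while $\Lambda^T=O(G,P)$ forms $G$-orbits in one step. Because $G_r$ is normal in the abelian group $G$, forming orbits of orbits agrees with forming $G$-orbits directly, so the two index sets coincide; unwinding the defining sums $\bigoplus_{F}\mathrm{Hom}(FM,M)$ with the twisted law $f\cdot g=f\circ Fg$ then matches hom-spaces and multiplications, giving $(\Lambda^{T,r})^T\cong\Lambda^T$. Combined with the previous paragraph this is precisely the assertion of the proposition.

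The hard part will be exactly this last identification, i.e. the transitivity of the orbit-algebra construction along the chain $G_r\subset G$; the freeness check and the descent of the Nakayama functor are formal. If one prefers to avoid it, the cleanest alternative is the standard tower-of-Galois-coverings principle: given the two Galois coverings $\Lambda\#k\mathbb{Z}^*\to\Lambda^{T,r}$ with group $G_r$ and $\Lambda\#k\mathbb{Z}^*\to\Lambda^T$ with group $G$, with $G_r\trianglelefteq G$, the first factors through the second and the induced functor $\Lambda^{T,r}\to\Lambda^T$ is Galois with group $G/G_r\cong\mathbb{Z}/r\mathbb{Z}$, which is what we want.
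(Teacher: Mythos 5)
Your proposal is correct and follows essentially the paper's own route: the paper proves this proposition simply by invoking Theorem \ref{covA} for the graded self-injective algebra $\Lambda^{T,r}$ (having already noted that $\Lambda\# k\mathbb{Z}^*$ is a Galois covering of it with group $G_r$), which is exactly your main argument. You merely make explicit the details the paper leaves tacit, namely the freeness of the induced Nakayama translation via the second-coordinate computation and the identification $(\Lambda^{T,r})^T\cong\Lambda^T$ by transitivity of the orbit-algebra construction.
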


Clearly $\Lambda^{T,r}$ is an intermediate covering of $\Lambda^T$ for $r > 1$.
Let $Q^{T,r}$ be the bound quiver of $\Lambda^{T,r}$.
Then the vertex set of $Q^{T,r}$ is the set of the orbits of the vertices $\overline{Q}$ with respect to the group generated by the $r$th power of Nakayama translation.
It follows from Corollary \ref{arrows} that the number of arrows from $i$ to $j$ is the same as the number of the arrows from $\tau i$ to $\tau j$, so we may extend $\tau$ to a bijective map on the arrows, and $\tau$ is extended to a quiver automorphism of $\overline{Q}$.
The arrows of $Q^{T,r}$ are regarded as the orbits of the arrows under $\tau^r$.
So the quiver of $Q^{T,r}$ is the orbit quiver of $\overline{Q}$, obtained from the separated directed quiver $\overline{Q}$ by identifying the vertices and arrows in a $\tau^r$-orbit, respectively.
So the vertices $(j,m)$ and $(\tau^{-rt} j, m+rtl)$ of $\overline{Q}$ are identified in $Q^{T,r}$ for all integers $t$.

We now turn to the $r=1$ case.
The following proposition follows easily from Corollary ~\ref{arrows}.

\begin{prop}\label{orbitqv}
Each connected component of the quiver ${Q}^T$ is obtained from a connected component of a special truncated quiver by adding an arrow from $(j,l)$ to $(i,0)$ for each independent bound path from $(i,0)$ to $(j,l)$.
\end{prop}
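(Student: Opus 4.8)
The plan is to read both the vertices and the arrows of $Q^T$ directly off the regular covering $\overline{Q}\to Q^T$ recorded in the discussion preceding the proposition, and then to recognize the resulting combinatorial data as a connected component of a special truncated quiver together with the asserted extra arrows, the latter being controlled by a dimension count via Corollary~\ref{arrows}.

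First I would fix a connected component and choose convenient orbit representatives. Since $Q^T$ is the orbit quiver of $\overline{Q}$ under the group generated by $\overline{\tau}$, with $(j,m)$ and $(\tau^{-t}j,m+tl)$ identified, every $\overline{\tau}$-orbit of vertices meets the window $0\le n\le l-1$ in exactly one point; here I would invoke Proposition~\ref{nocyc} to be sure the strictly increasing levels of $\overline{Q}$ prevent any collapsing inside a single window. For a fixed source $i_0$ these representatives are, by definition and by Proposition~\ref{sptrqv}, precisely the vertices of the special truncated quiver $(Q^N,i_0)$, so the vertices of a component of $Q^T$ are in natural bijection with those of a component of a special truncated quiver.

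Next I would analyze the arrows. Because $N$ is a regular covering (Theorem~\ref{covA}, Corollary~\ref{NKcov}), the arrows of $Q^T$ issuing from a representative are exactly the $\overline{\tau}$-orbits of arrows of $\overline{Q}$, and since each arrow of $\overline{Q}$ raises the level by one, I would split them into two kinds: those joining consecutive levels inside the window $0,\dots,l-1$, which are precisely the arrows already present in $(Q^N,i_0)$; and those leaving the top level $l-1$, whose targets sit at level $l$ and are therefore identified, through $(j,l)\sim(\tau j,0)$, with level-$0$ vertices. The first kind reproduces the special truncated quiver verbatim, and the second kind is the family of extra arrows to be described. To count the latter I would note that the arrows of $\overline{Q}$ out of $(i,l-1)$ are in bijection with the arrows of $Q$ into the relevant vertex, so by Corollary~\ref{arrows}(1) their number is a dimension of the shape $\dim_k e_{\tau^{-1}i}\Lambda_{l-1}e_j$; applying the non-degenerate pairing of Lemma~\ref{bil} then rewrites this dimension as the number of independent bound paths realizing the Nakayama translation between the two boundary vertices, which under $(j,l)\sim(\tau j,0)$ is exactly the number of independent bound paths from $(i,0)$ to $(j,l)$, giving one extra arrow per such path as claimed.

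I expect the main obstacle to lie entirely in this final matching: one must carry the Nakayama shift concealed in the identification $(j,l)\sim(\tau j,0)$ in step with the direction of the duality of Lemma~\ref{bil}, so that the arrows leaving level $l-1$ are paired, with the correct multiplicities, against bound paths running from level $0$ to level $l$ rather than against themselves. Once this bookkeeping is pinned down the statement is a direct transcription of the orbit-quiver description, which is why it is reasonable to expect it to follow from Corollary~\ref{arrows} with little further work.
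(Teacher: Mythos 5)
Your overall route is the one the paper intends: the paper prints no argument beyond the remark that the proposition ``follows easily from Corollary~\ref{arrows}'', and your reduction supplies exactly the missing substance. Your first two steps are correct: since $\overline{\tau}$ shifts the level by $-l$, each $\overline{\tau}$-orbit meets the window $0\le n\le l-1$ in exactly one vertex (no appeal to Proposition~\ref{nocyc} is needed for this, it is pure arithmetic on levels), these representatives are the vertices of $(Q^N,i_0)$, the in-window arrow orbits reproduce the truncated quiver verbatim, and the remaining arrow orbits are those leaving level $l-1$, whose targets at level $l$ get identified with level-$0$ vertices via $(j,l)\sim(\tau j,0)$.

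The gap sits precisely at the point you flagged as delicate, and it is not mere bookkeeping: the identification you assert at the end is false as stated. Corollary~\ref{arrows}(1) together with the pairing of Lemma~\ref{bil} matches the arrows $i\to j$ of $Q$ (equivalently, the extra arrows $[(i,l-1)]\to[(\tau j,0)]$ of $Q^T$) with a basis of $e_{\tau^{-1}i}\Lambda_{l-1}e_j$, i.e.\ with independent bound paths of length $l-1$ from $j$ to $\tau^{-1}i$; lifting these to $\overline{Q}$ and applying the automorphism $\overline{\tau}$ of Theorem~\ref{lsquiver} so that they land in the window, they become bound paths from $(\tau j,0)$ to $(i,l-1)$ --- ending at level $l-1$, not level $l$. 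They are \emph{not} in bijection with ``bound paths from $(i,0)$ to $(j,l)$'': a bound path from level $0$ to level $l$ has length $l$, hence is maximal, exists only for $j=\tau^{-1}i$, and is unique up to scalar by condition (4) of stability; pairing the extra arrows against those would attach exactly one loop to each level-$0$ vertex, which is already wrong for $\Lambda=k[x]/(x^{3})$, where the special truncated quiver is $(0,0)\to(0,1)$ and $Q^T$ is a $2$-cycle with the extra arrow $[(0,1)]\to[(0,0)]$, not a loop at $[(0,0)]$. What your computation actually proves --- and what the paper itself uses later, in the proofs of Lemma~\ref{embed} and Lemma~\ref{mmain} --- is that one adds an arrow from $(j,l-1)$ to $(i,0)$ for each independent bound path of length $l-1$ from $(i,0)$ to $(j,l-1)$; the proposition's ``$(j,l)$'' is an off-by-one that must be corrected, not absorbed into the identification $(j,l)\sim(\tau j,0)$ as your final sentence attempts. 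A complete write-up should carry out the $\overline{\tau}$-shift on the paths explicitly and state the count in the corrected form.
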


Let $\Lambda$ be an algebra and $M$ be a $\Lambda$-bimodule. Recall the trivial extension $\Lambda\ltimes M$ of $\Lambda$ by $M$ is the algebra defined on the vector spaces $\Lambda\oplus M$ with the multiplication defined by $$(a,x)(b,y)=(ab, ay+xb)$$ for $a,b\in \Lambda$ and $x,y\in M$.

Trivial extension of $\Lambda$ by its dual $D\Lambda$ is called the trivial extension of $\Lambda$ and we denote it as $\iota (\Lambda) = \Lambda \ltimes D \Lambda$.

Let $\sigma$ be an automorphism of $\Lambda$. Let $M$ be a $\Lambda$-bimodule.
Define the twist $M^{\sigma}$ of $M$ as the bimodule with $M$ as the vector space.
The left multiplication is the same as $M$, and
the right multiplication is twisted by $\sigma$, that is,  defined by $xb=x\sigma(b)$ for all $x \in M^{\sigma}$ and $b \in \Lambda$.
Define the twisted trivial extension $\iota_{\sigma} (\Lambda) = \Lambda \ltimes D \Lambda^{\sigma}$ to be the trivial extension of $\Lambda$ by the twisted $\Lambda$-bimodule $D \Lambda^{\sigma}$.

\medskip
Let $Q$ be a stable quiver of Loewy length $l+1$.
and let $\overline{Q}$ be its separated directed quiver.
Let $Q'=(Q^N, i_0)$ be a special truncated quiver of $\overline{Q}$, and let  $Q^T(i_0)$ be a connect component of $Q^T$ containing $(i_0,0)$.
Let $\Lambda' = k(Q')$ and $\Lambda^{T'} = k(Q^T(i_0))$.

\begin{lemma}\label{embed}
An algebra is a subalgebra of its trivial extension algebra.

$\Lambda'$ is a subalgebra of $\Lambda^{T'}$.
\end{lemma}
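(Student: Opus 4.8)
The plan is to prove the two assertions in turn: first the general embedding of an algebra into its (possibly twisted) trivial extension, and then the concrete inclusion $\Lambda' \hookrightarrow \Lambda^{T'}$, which I would read off from the quiver description in Proposition~\ref{orbitqv}.

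For the general statement I would exhibit the embedding explicitly. Define $\phi : \Lambda \to \iota(\Lambda) = \Lambda \ltimes D\Lambda$ by $\phi(a) = (a,0)$. The multiplication rule gives $(a,0)(b,0) = (ab,\, a\cdot 0 + 0\cdot b) = (ab,0)$, so $\phi$ is a $k$-algebra homomorphism; it is clearly unital and injective, whence $\Lambda$ is identified with the subalgebra $\Lambda \oplus 0$ of $\iota(\Lambda)$. I would then note that the same formula $a \mapsto (a,0)$ works verbatim for the twisted extension $\iota_\sigma(\Lambda) = \Lambda \ltimes D\Lambda^\sigma$, since the twist by $\sigma$ changes only the right action on the summand $D\Lambda^\sigma$ and leaves the copy $\Lambda \oplus 0$ untouched.

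For the second assertion I would realize the inclusion at the level of bound quivers. By Proposition~\ref{orbitqv}, the component $Q^T(i_0)$ is obtained from the special truncated quiver $Q' = (Q^N, i_0)$ by adjoining, to the same vertex set, one \emph{returning arrow} from $(j,l)$ to $(i,0)$ for each independent maximal bound path from $(i,0)$ to $(j,l)$. Hence there is a quiver embedding $Q' \hookrightarrow Q^T(i_0)$, the identity on vertices and the inclusion on arrows, and it induces a $k$-linear map $\Lambda' \to \Lambda^{T'}$ carrying each bound path of $Q'$ to the corresponding bound path of $Q^T(i_0)$ that uses no returning arrow. Multiplicativity is then immediate: if $p,q$ are bound paths of $Q'$ whose composite is defined, then every vertex occurring in $p$ or $q$ — in particular the source and the target of the composite — lies in the window of levels $0,\ldots,l-1$, so the concatenation stays inside $Q'$, never crosses level $l$, and never meets a returning arrow; its value in $\Lambda^{T'}$ therefore agrees with its value in $\Lambda'$.

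The step I expect to demand the most care is faithfulness, i.e. that passing from $\overline{Q}$ to the orbit quiver $Q^T$ neither identifies two distinct vertices of $Q'$ nor creates a linear relation among $Q'$-paths absent in $\Lambda'$. Here I would invoke that $Q^T$ identifies $(j,m)$ with $(\tau^{-t}j,\, m+tl)$, so two vertices merge only when their levels differ by a nonzero multiple of $l$; since $Q'$ occupies only the levels $0,\ldots,l-1$, a window of width $l-1 < l$, its vertices remain distinct and no bound path of $Q'$ wraps around. Consequently the only relations of $\Lambda^{T'}$ that can involve $Q'$-paths are the images of $\overline{\rho}$, which are exactly the defining relations of $\Lambda' = k(Q')$; stated covering-theoretically, the covering $\overline{Q} \to Q^T$ induced by the orbit construction restricts to an isomorphism on the morphism spaces between vertices lying in a single fundamental window. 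This gives injectivity, and together with the multiplicativity above it shows that $\Lambda'$ is isomorphic to the subalgebra of $\Lambda^{T'}$ spanned by the returning-arrow-free bound paths, completing the proof.
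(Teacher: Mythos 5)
Your proposal is correct and follows essentially the same route as the paper: the explicit map $a \mapsto (a,0)$ makes precise the paper's ``follows directly,'' and your quiver-level argument—$Q^T(i_0)$ is $Q'$ plus returning arrows, with every added relation involving a new arrow, so no new relations among $Q'$-paths—is exactly the paper's observation, merely spelled out with the extra (correct) verifications that the $\tau$-orbit identification $(j,m)\sim(\tau^{-t}j,\,m+tl)$ cannot merge vertices in the width-$l$ window and that multiplicativity holds. These details, which the paper leaves implicit, are welcome but do not constitute a different approach.
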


\begin{proof} The first assertion follows directly.

For the second assertion, we observe that $Q^T(i_0) $ is obtained from $Q'$ by adding certain arrows from vertices $(i,l-1)$ to $(j,0)$ and relations with paths containing the new arrows.
Since the relations added does not concern any paths in $Q'$,  $\Lambda' =k(Q')$ is a subalgebra of $k(Q^T(i_0))$.
\end{proof}
Regard $Q'$ as a bound subquiver of $Q^T(i_0)$ with the same vertex set, index the vertices as $(j,m)$ with $0 \le m \le l-1$.
$\Lambda^{T'}$ is a graded self-injective algebra  with Loewy length $l+1$.
Write $\Lambda^{T'} = \Lambda^{T'}_0 +\Lambda^{T'}_1 + \cdots + \Lambda^{T'}_{l-1} + \Lambda^{T'}_l$, where $\Lambda^{T'}_t$ is the homogeneous component of degree $t$.

Let $\Lambda' = \Lambda'_0 +\Lambda_1' + \cdots + \Lambda'_{l-1}$, where $\Lambda'_t$ is the homogeneous component of degree $t$.
Let $\iota (\Lambda')$ be the trivial extension of $\Lambda'$.
By Lemma ~\ref{embed}, $\Lambda'$ is subalgebra both of $\iota (\Lambda')$ and of $\Lambda^{T'}$.
Clearly, $\Lambda'_0\simeq \iota (\Lambda')_0\simeq  \Lambda^{T'}_0$ is semisimple algebra, and we identify them with $\Lambda'_0$.
Write $e_{(i,t)}$ for the primitive idempotent corresponding to the vertex $(i,t)$.

For any vertices $(i,t_1), (j,t_2) \in Q^T(i_0)_0$ with $0 \le t_1 < t_2 \le l-1$, we have
$$e_{(j,t_2)} \iota (\Lambda')_{t_2-t_1} e_{(i,t_1)}  =e_{(j,t_2)} \Lambda'_{t_2-t_1}e_{(i,t_1)}=e_{(j,t_2)} \Lambda^{T'}_{t_2-t_1} e_{(i,t_1)}.$$

The vertex set of $Q^T(i_0)$ is the same as that of $Q'$, decompose $\Lambda^{T'}$ accordingly,
$$\arr{ccl}{\Lambda^{T'} &=&\bigoplus_{(i,t_1),(j,t_2)\in Q^T(i_0)_0, t_1 \le t_2} e_{(j,t_2)}\Lambda^{T'}_{t_2-t_1}e_{(i,t_1)} \\&& \oplus  \bigoplus_{(i,t_1),(j,t_2)\in Q^T(i_0)_0, t_1 \le t_2} e_{(i,t_1)}\Lambda^{T'}_{l+t_1-t_2} e_{(j,t_1)} \\&=& \Lambda'\oplus  \bigoplus_{(i,t_1),(j,t_2)\in (Q^N,i_0)_0, t_1 \le t_2} e_{(i,t_1)}\Lambda^{T'}_{l+t_1-t_2} e_{(j,t_1)}.}$$

Let $$M=\bigoplus_{(i,t_1),(j,t_2)\in (Q^N,i_0)_0, t_1 \le t_2} e_{(i,t_1)}\Lambda^{T'}_{l+t_1-t_2} e_{(j,t_1)}.$$

\begin{lemma}\label{bimoduleM}
$M$  is a $\Lambda'$-bimodule and $\Lambda^{T'}$ is a trivial extension of $\Lambda'$ by $M$.
\end{lemma}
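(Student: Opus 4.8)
The preceding discussion already provides the direct-sum decomposition $\Lambda^{T'}=\Lambda'\oplus M$ of vector spaces, with $\Lambda'$ a subalgebra by Lemma~\ref{embed}. So the plan is to establish the two remaining facts that together exhibit $\Lambda^{T'}$ as the trivial extension: first, that $\Lambda'M\subseteq M$ and $M\Lambda'\subseteq M$, so that $M$ is a two-sided $\Lambda'$-submodule and hence a $\Lambda'$-bimodule under the restricted multiplication; and second, that $M\cdot M=0$. Granting these, for $a,b\in\Lambda'$ and $x,y\in M$ one has $(a+x)(b+y)=ab+(ay+xb)$ with $ab\in\Lambda'$ and $ay+xb\in M$, which is precisely the multiplication rule of $\Lambda'\ltimes M$, so the isomorphism $\Lambda^{T'}\simeq\Lambda'\ltimes M$ is then purely formal.

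To control both products I would attach to each homogeneous component a single bookkeeping invariant. Recall that $Q^T(i_0)$ is the quotient of the covering component $(\overline{Q},i_0)$ under the identification $(j,m)\sim(\tau^{-1}j,m+l)$, every arrow of $\overline{Q}$ raising the level (the second coordinate) by one. For the component $e_{(j,t_2)}\Lambda^{T'}_s e_{(i,t_1)}$, any of its bound paths lifts to a path of $(\overline{Q},i_0)$ starting at $(i,t_1)$ with $0\le t_1\le l-1$ and ending at level $t_1+s$; I define its \emph{wrap number} $w=\lfloor (t_1+s)/l\rfloor$, which depends only on the source level $t_1$ and the degree $s$ and so is well defined on the whole component. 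Since $\Lambda^{T'}$ has Loewy length $l+1$ we have $0\le s\le l$, and since $0\le t_1\le l-1$ we get $t_1+s\le 2l-1$, whence $w\in\{0,1\}$. One checks that $w=0$ characterizes the components that make up $\Lambda'$ (the lifted path stays within levels $0,\dots,l-1$) and $w=1$ characterizes those making up $M$, including the top-degree socle components with $t_1=t_2$ and $s=l$; this matches the decomposition displayed before the lemma, and for a fixed ordered pair of vertices the $w=0$ and $w=1$ parts occupy degrees differing by $l$, which confirms that the sum is direct.

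The decisive step is additivity of the wrap number: if $x,y$ are homogeneous with $xy\neq0$, then lifting both and adding their level changes gives $w(xy)=w(x)+w(y)$. Combined with the bound $w\le1$ on nonzero elements, this settles everything at once. A nonzero product in $\Lambda'\cdot M$ or $M\cdot\Lambda'$ has $w=0+1=1$ and therefore lies in $M$, while a vanishing product lies in $M$ trivially; this is the bimodule property. A product in $M\cdot M$ would have $w=2$, impossible for a nonzero element, so $M\cdot M=0$. I expect the only genuine work to lie in verifying the additivity of the wrap number and in the boundary bookkeeping that assigns the degree-$l$ socle components to $M$ rather than to $\Lambda'$; once the wrap number is shown well defined and additive, the bimodule and square-zero properties are immediate and the trivial-extension identification follows formally.
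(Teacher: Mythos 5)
Your proposal is correct and takes essentially the same approach as the paper: your wrap number is exactly the count of how many times a lifted path crosses from level $l-1$ back to level $0$, i.e., how many of the arrows added in forming $Q^T(i_0)$ from $Q'$ it traverses, and the paper's proof is precisely the observation that any bound path traverses such an arrow at most once while every path in $M$ traverses one at least once, forcing $xy=0$ for $x,y\in M$. Your additivity of $w$ merely formalizes this count (and additionally yields the bimodule property $\Lambda'M\subseteq M$, $M\Lambda'\subseteq M$, which the paper asserts as clear).
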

\begin{proof}
Clearly, $M$  is a $\Lambda'$-bimodule.

Since for any $i,j$,  each bound path in $Q^T(i_0)$ passes through the arrow from $(i,l-1)$ to $(j,0)$ at most once and each path in $M$ passes through arrow from $(i,l-1)$ to $(j,0)$ at least once.
So for any elements $x,y\in M $, we have $xy=0$ in $\Lambda^{T'}$.
Hence $\Lambda^{T'}$ is a trivial extension of $\Lambda'$ by $M$.
\end{proof}

\begin{lemma}\label{twist}
$M$  is isomorphic to $D{\Lambda'}^{\sigma}$ for some automorphism $\sigma$ of $\Lambda '$.
\end{lemma}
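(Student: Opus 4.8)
The plan is to build the bimodule isomorphism $M \cong D{\Lambda'}^{\sigma}$ out of the non-degenerate Nakayama forms of Lemma \ref{bil}, applied not to $\Lambda'$ itself but to the ambient self-injective algebra $\Lambda^{T'}$. First I would record that, by Theorem \ref{covA}, $\Lambda^{T'}$ is a connected component of the weakly symmetric algebra $\Lambda^T$, hence itself a graded self-injective algebra of Loewy length $l+1$ whose Nakayama translation is trivial on vertices. Thus in Lemma \ref{bil} we may take $\tau=\mathrm{id}$ on the vertices of $Q^T(i_0)$, which gives, for every pair of vertices $(i,t_1),(j,t_2)$ and every degree $s$, a non-degenerate pairing $e_{(i,t_1)}\Lambda^{T'}_{l-s}e_{(j,t_2)}\otimes e_{(j,t_2)}\Lambda^{T'}_{s}e_{(i,t_1)}\to k$ satisfying the multiplicative property $(xy,z)=(x,yz)$.

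Specializing to $t_1\le t_2$ and $s=t_2-t_1$, so that $l-s=l+t_1-t_2$, and using Lemma \ref{embed} to identify the low-degree piece $e_{(j,t_2)}\Lambda^{T'}_{t_2-t_1}e_{(i,t_1)}$ with $e_{(j,t_2)}\Lambda'_{t_2-t_1}e_{(i,t_1)}$, the pairing restricts to a non-degenerate form on $e_{(i,t_1)}\Lambda^{T'}_{l+t_1-t_2}e_{(j,t_2)}\otimes e_{(j,t_2)}\Lambda'_{t_2-t_1}e_{(i,t_1)}$. Summing over all such pairs and recalling the description $M=\bigoplus_{t_1\le t_2}e_{(i,t_1)}\Lambda^{T'}_{l+t_1-t_2}e_{(j,t_2)}$ from Lemma \ref{bimoduleM}, this produces a $k$-linear isomorphism $\phi\colon M\to D\Lambda'$ carrying each summand of $M$ onto the dual of the matching graded summand of $\Lambda'$.

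It then remains to match the bimodule structures. Writing the form as $\langle -,-\rangle$ and using $(xy,z)=(x,yz)$, I would check that $\phi$ intertwines one of the two actions exactly: for $m\in M$ and $a\in\Lambda'$ one computes $\langle ma,z\rangle=\langle m,az\rangle$, so $\phi$ is compatible with the corresponding action on the dual bimodule $D\Lambda'$. For the remaining action the same relation must be applied in the opposite order, which forces the factor across the form; the resulting discrepancy from the untwisted dual action is exactly a $k$-linear automorphism $\sigma$ of $\Lambda'$. Because $\Lambda^{T'}$ is weakly symmetric, $\sigma$ fixes every idempotent $e_{(i,t)}$ and hence every vertex, so that matching the convention $xb=x\sigma(b)$ defining $D{\Lambda'}^{\sigma}$ places the twist on the correct side and yields $M\cong D{\Lambda'}^{\sigma}$ as $\Lambda'$-bimodules.

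I expect the real work to lie in this last step: showing that the per-pair scalar ambiguities in the non-degenerate forms assemble into a single, globally well-defined algebra automorphism $\sigma$ of $\Lambda'$, that is, that $\sigma$ is multiplicative and respects the relations of $Q'$, and that with this one choice the twisted action on $M$ agrees \emph{on the nose} with the $\sigma$-twist of the action on $D\Lambda'$. This is a consistency verification of cocycle type, controlled by the associativity of $\Lambda^{T'}$ together with the multiplicative property of Lemma \ref{bil}; the triviality of the Nakayama translation is precisely what guarantees that $\sigma$ fixes vertices and is therefore an honest automorphism of $\Lambda'$ rather than a map permuting blocks.
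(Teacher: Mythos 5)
Your proposal is correct in outline but takes a genuinely different route from the paper's proof. The paper disposes of the lemma in three lines: by Theorem~\ref{covA} the algebra $\Lambda^{T'}$ is self-injective, all its projectives have the same Loewy length, hence it is \emph{well graded} in the sense of \cite{cx}, and the statement is then exactly Lemma 2.5 of \cite{cx}, cited as a black box. You instead reprove that cited lemma from scratch, building the duality $M \simeq D\Lambda'$ degree by degree from the graded Nakayama form of Lemma~\ref{bil} (with $\tau = \mathrm{id}$, since $\Lambda^{T'}$ is weakly symmetric) and extracting the twist $\sigma$ from the one-sided failure of symmetry of the form. This works, and the final consistency step you flag as ``the real work'' is lighter than you fear: the multiplicative property $(xy,z)=(x,yz)$ in Lemma~\ref{bil} already says the per-pair forms assemble into a single Frobenius functional $\lambda$ on $\Lambda^{T'}$, so $\sigma$ is nothing but the associated graded Nakayama automorphism --- well defined by non-degeneracy, multiplicative by associativity of $\Lambda^{T'}$, fixing each idempotent $e_{(i,t)}$ because the Nakayama permutation is trivial, and preserving $\Lambda'$ and $M$ because a degree-preserving, idempotent-fixing automorphism stabilizes each component $e_{(j,t_2)}\Lambda^{T'}_{t_2-t_1}e_{(i,t_1)}$, and $\Lambda'$ is the sum of exactly those with $0\le t_2-t_1\le l-1$ not wrapping around. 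Two bookkeeping points: the paper's displayed formula for $M$ contains a typo ($e_{(j,t_1)}$ should read $e_{(j,t_2)}$), which you silently and correctly repair; and your computation naturally twists the \emph{left} action by the Nakayama automorphism, whereas the paper's convention $xb = x\sigma(b)$ twists the right one --- moving the twist across the duality replaces $\sigma$ by its inverse, which is harmless since the lemma asserts only ``some automorphism $\sigma$.'' What each approach buys: the paper's citation is shorter and makes visible where the hypothesis of well-gradedness (uniform Loewy length of projectives) is actually needed, keeping the argument aligned with the framework of \cite{cx}; your direct argument is self-contained within the paper's own Lemma~\ref{bil} and makes the provenance of $\sigma$ explicit, which the paper's proof leaves hidden inside the reference.
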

\begin{proof}
By Theorem ~\ref{covA}, $\Lambda^{T'}$ is self-injective.
From its proof we see that all the projectives of $\Lambda^{T'}$ have the same Loewy length as those of $\Lambda\# k G^*$, which is, the same as the Loewy length of the projectives of $\Lambda$.
So $\Lambda^{T'}$ is well graded in the sense of \cite{cx}, and the Lemma follows from Lemma 2.5 of \cite{cx}.
\end{proof}

So we get the following theorem immediately.

\begin{thm}\label{TE}
Let $\Lambda'$ be the algebra given by the bound quiver $Q'=(Q^N, i_0)$ and let $\iota (\Lambda')$ be the trivial extension of $\Lambda'$. Let $\Lambda^{T'}$ be the orbit algebra of a connected component of of $\Lambda \# k \mathbb Z^*$ containing $e_{(i_0,0)}$ with respect to the Nakayama functor. Then there is an automorphism $\sigma$ of $\Lambda'$ such that $$\Lambda^{T'} \simeq \iota_{\sigma} (\Lambda').$$
\end{thm}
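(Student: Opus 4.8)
The plan is to assemble Theorem~\ref{TE} directly from the decomposition of $\Lambda^{T'}$ set up in the preceding lemmas, so that essentially no fresh computation is required. First I would invoke Lemma~\ref{embed} to regard $\Lambda'=k(Q')$ as a subalgebra of $\Lambda^{T'}=k(Q^T(i_0))$, using that $Q^T(i_0)$ is obtained from $Q'$ only by adjoining the returning arrows from vertices $(i,l-1)$ to $(j,0)$ together with relations involving those arrows. This gives the vector-space splitting $\Lambda^{T'}=\Lambda'\oplus M$, in which $M$ is the span of those bound paths of $Q^T(i_0)$ that traverse at least one returning arrow, i.e.\ exactly the bimodule introduced just before Lemma~\ref{bimoduleM}.

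Next I would apply Lemma~\ref{bimoduleM}: since every bound path of $Q^T(i_0)$ passes through a returning arrow at most once, the product of any two elements of $M$ vanishes, so $M$ is a square-zero $\Lambda'$-bimodule and $\Lambda^{T'}$ is precisely the trivial extension $\Lambda'\ltimes M$. At this point the only remaining task is to identify the bimodule $M$ up to a twist. This is supplied by Lemma~\ref{twist}: because the projectives of $\Lambda^{T'}$ all share the common Loewy length $l+1$, the algebra $\Lambda^{T'}$ is well graded in the sense of \cite{cx}, and Lemma~2.5 of \cite{cx} produces an automorphism $\sigma$ of $\Lambda'$ with $M\simeq D{\Lambda'}^{\sigma}$ as $\Lambda'$-bimodules. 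Combining these, $\Lambda^{T'}\simeq \Lambda'\ltimes D{\Lambda'}^{\sigma}$, which is by definition $\iota_{\sigma}(\Lambda')$, and the proof is complete.

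The substance of the argument therefore lies entirely in the lemmas rather than in the theorem. The step I expect to be the genuine obstacle—already discharged by Lemma~\ref{twist}—is verifying that the complementary summand $M$ is not merely abstractly isomorphic to $D\Lambda'$ but carries the \emph{twisted} bimodule structure $D{\Lambda'}^{\sigma}$ for an actual algebra automorphism $\sigma$, with the isomorphism compatible with the left and right $\Lambda'$-actions simultaneously. I would be careful to record that $\sigma$ is forced to be the automorphism that the Nakayama structure of $\Lambda^{T'}$ induces on the subalgebra $\Lambda'$, which is how the Nakayama translation $\overline{\tau}$ reindexes the vertices of $Q'$ when the returning arrows are glued in; this is what prevents the extension from being the ordinary (untwisted) trivial extension in general.
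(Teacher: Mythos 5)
Your proposal is correct and follows exactly the paper's own route: the theorem is deduced immediately by combining Lemma~\ref{embed}, the vector-space decomposition $\Lambda^{T'}=\Lambda'\oplus M$, Lemma~\ref{bimoduleM}, and Lemma~\ref{twist}, with the twist supplied by Lemma~2.5 of \cite{cx} via the well-gradedness of $\Lambda^{T'}$. Your closing remark identifying $\sigma$ with the automorphism induced by the Nakayama structure is a sound gloss consistent with the paper's treatment.
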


As a corollary, we have that $\Lambda^{T} \simeq \iota_{\sigma} (\Lambda^N)$.

\medskip

Note that $I = \{(i,n)|i \in Q_0, n \in \mathbb Z\}$ is the index set of a complete set of orthogonal idempotents in   $ \Lambda \# k \mathbb Z^*$.
Set $I[s] = \{(i,n) |i \in Q_0, (s-1)l \le n < sl  \}$ for $s \in \mathbb Z$.
Let $e_{I[s]} = \sum_{(i,n)  \in I[s] } e_{(i,n)} $, and let $$M[s,t] = e_{I[s]}  \Lambda \# k \mathbb Z^* e_{I[t]} = \sum_{(i,n)  \in I[s]}\sum_{(j,m)  \in I[t] } e_{(i,n)} \Lambda \# k \mathbb Z^*  e_{(j,m)}$$
for all $s,t \in \mathbb Z$.
By Theorem ~\ref{lsquiver}, maximal bound path in  $ \Lambda \# k \mathbb Z^*$ has length $l$, and all paths of length larger than $l$ are zero.
Since $ e_{(j,n)} \Lambda \# k \mathbb Z^*  e_{(i,m)}$ is spanned by paths of length $n-m$, thus  $ e_{(j,n)} \Lambda \# k \mathbb Z^*  e_{(i,m)}=0$ if $n-m < 0$ or $n-m > l$.
This leads to the following lemma.

\begin{lemma}\label{Mst}
$ \Lambda \# k \mathbb Z^* = \bigoplus_{s,t \in \mathbb Z} M[s,t]$ as vector spaces.

$M[s,t]  = 0$ if $s \neq t, t+1$.

$M[s,t] M[s,t] = 0$ for all $s\neq t$.
\end{lemma}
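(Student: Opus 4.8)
The plan is to derive all three assertions from the two-sided Peirce decomposition of $\Lambda \# k\mathbb{Z}^*$ relative to the idempotents $e_{I[s]}$, together with the length bound recorded just above the statement. First I would note that the blocks $I[s]$ partition the index set $I$: every integer $n$ lies in exactly one half-open interval $[(s-1)l, sl)$, so the $I[s]$ are pairwise disjoint with union $I$. Consequently $\{e_{I[s]} \mid s \in \mathbb{Z}\}$ is a complete set of orthogonal idempotents, and the Peirce decomposition gives $\Lambda \# k\mathbb{Z}^* = \bigoplus_{s,t \in \mathbb{Z}} e_{I[s]} \Lambda \# k\mathbb{Z}^* e_{I[t]} = \bigoplus_{s,t} M[s,t]$ as vector spaces, which is the first assertion.

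For the second assertion I would use that $e_{(i,n)} \Lambda \# k\mathbb{Z}^* e_{(j,m)}$ is spanned by bound paths of length $n-m$, so by Theorem~\ref{lsquiver} it vanishes unless $0 \le n - m \le l$. Now take $(i,n) \in I[s]$ and $(j,m) \in I[t]$, so that $(s-1)l \le n < sl$ and $(t-1)l \le m < tl$. Subtracting these independently ranging coordinates, the achievable values of $n-m$ fill the integer range $[(s-t-1)l + 1,\ (s-t+1)l - 1]$. A nonzero summand of $M[s,t]$ requires this range to meet $[0,l]$, and a direct check shows this forces $(s-t-1)l + 1 \le l$ together with $(s-t+1)l - 1 \ge 0$, i.e. $0 \le s - t \le 1$. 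Hence $M[s,t] = 0$ whenever $s \neq t, t+1$.

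Finally, the third assertion is immediate from orthogonality: for $a, b \in M[s,t]$ one has $a = e_{I[s]} a\, e_{I[t]}$ and $b = e_{I[s]} b\, e_{I[t]}$, whence $ab = e_{I[s]} a\, (e_{I[t]} e_{I[s]})\, b\, e_{I[t]}$, and $e_{I[t]} e_{I[s]} = 0$ when $s \neq t$ forces $ab = 0$, so $M[s,t]M[s,t] = 0$. None of this is deep; the only place demanding care is the interval arithmetic in the second step, where the half-open description of $I[s]$ must be handled precisely so that the boundary cases $s - t = -1$ and $s - t = 2$ are correctly excluded rather than spuriously admitted.
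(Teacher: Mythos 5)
Your proposal is correct and is essentially the paper's own argument: the paper derives the lemma directly from the Peirce decomposition with respect to the complete set of orthogonal idempotents $e_{(i,n)}$ together with the observation that $e_{(i,n)}\,\Lambda\#k\mathbb{Z}^*\,e_{(j,m)}$ is spanned by paths of length $n-m$ and hence vanishes unless $0\le n-m\le l$. Your explicit interval arithmetic for $n-m\in[(s-t-1)l+1,\,(s-t+1)l-1]$ and the orthogonality computation $e_{I[t]}e_{I[s]}=0$ merely spell out details the paper leaves implicit in the paragraph preceding the lemma.
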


Let $\Gamma[s] = M[s,s] = e_{I[s]}  \Lambda \# k \mathbb Z^* e_{I[s]}$, it is an algebra with the unit $e_{I[s]}$.
It follows easily that

\begin{lemma}\label{nearrep}
$$\Lambda \# k \mathbb Z^*  \simeq \left(\arr{ccccccc}{\ddots \\ & \Gamma[s+1] & M[s+1,s] & \\ & &\Gamma[s] & M[s,s-1]  \\ & & &\Gamma[s-1] &\\& & &&\ddots}
\right).$$
\end{lemma}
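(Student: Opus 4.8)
The plan is to realize $\Lambda \# k \mathbb Z^*$ as a (non-unital, row-and-column finite) matrix algebra via the Peirce decomposition associated with the coarse idempotents $e_{I[s]}$. First I would check that $\{e_{I[s]} \mid s \in \mathbb Z\}$ is a complete set of orthogonal idempotents: since $I = \{(i,n)\}$ is partitioned into the blocks $I[s]$ and the $e_{(i,n)}$ are pairwise orthogonal idempotents summing formally to the identity, we get $e_{I[s]} e_{I[t]} = \delta_{st} e_{I[s]}$ and $\sum_s e_{I[s]}$ acting as the identity. This is the only new verification; everything else is a bookkeeping translation of Lemma~\ref{Mst}.

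Next, the Peirce decomposition with respect to this family is precisely $\Lambda \# k \mathbb Z^* = \bigoplus_{s,t} e_{I[s]} (\Lambda \# k \mathbb Z^*) e_{I[t]} = \bigoplus_{s,t} M[s,t]$, which is the first statement of Lemma~\ref{Mst}. By the second statement of that lemma, $M[s,t] = 0$ unless $t = s$ or $t = s-1$, so the only surviving blocks are the diagonal $\Gamma[s] = M[s,s]$ and the single off-diagonal family $M[s,s-1]$. This already forces the bidiagonal shape displayed, with $\Gamma[s]$ on the diagonal and $M[s+1,s]$ in the adjacent position of row $s+1$.

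It then remains to match the multiplications. For homogeneous $x \in M[s,t]$ and $y \in M[u,v]$ one has $xy = e_{I[s]} x e_{I[t]} e_{I[u]} y e_{I[v]} = \delta_{tu}\, e_{I[s]} (xy) e_{I[v]} \in M[s,v]$, which is exactly the rule for multiplying matrices whose $(s,t)$ entry lies in $M[s,t]$. Hence the assignment sending $a \in \Lambda \# k \mathbb Z^*$ to the matrix with $(s,t)$ entry $e_{I[s]} a e_{I[t]}$ is a bijection that carries the algebra product to matrix multiplication, i.e.\ an algebra isomorphism onto the displayed bidiagonal matrix algebra.

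The one point to be careful about — and the only genuine obstacle, since the statement is otherwise immediate — is that $\Lambda \# k \mathbb Z^*$ is infinite-dimensional and non-unital, so the object on the right is an infinite matrix and one must ensure its product is well defined. This is guaranteed by the second statement of Lemma~\ref{Mst}: each row $s$ has at most two nonzero blocks, $\Gamma[s]$ and $M[s,s-1]$, and each column $t$ has at most two, $\Gamma[t]$ and $M[t+1,t]$, so every entry of a product is a finite sum and no convergence issue arises. The third statement of Lemma~\ref{Mst}, that the strictly off-diagonal blocks square to zero, is then automatic from orthogonality ($e_{I[t]} e_{I[s]} = 0$ for $s \neq t$) and simply records that the off-diagonal part is a square-zero bimodule over the diagonal algebra $\bigoplus_s \Gamma[s]$.
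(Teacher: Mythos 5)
Your proof is correct and takes essentially the same approach as the paper: the paper deduces the lemma directly from Lemma~\ref{Mst}, i.e.\ precisely the Peirce decomposition $\Lambda \# k \mathbb Z^* = \bigoplus_{s,t} e_{I[s]} (\Lambda \# k \mathbb Z^*) e_{I[t]}$ with the blocks $M[s,t]$ vanishing unless $s = t$ or $s = t+1$, which is exactly your argument with the bookkeeping left implicit. Your explicit check that the infinite, non-unital matrix product is well defined (each row and column having at most two nonzero blocks) is a sound amplification of what the paper's ``it follows easily'' elides.
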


Let ${Q}[s]$ be the bound quiver with vertex set $I[s]$ and induced relations.
Then ${Q}[s]$ is a shift of $Q[0]=Q^N$ and  $\Gamma[s] = k({Q}[s])$.
The shift induces isomorphisms
$$\psi_{s,t}: \Gamma[s] \to \Gamma[t] $$
between these algebras, and $M[s,t]$ is a $\Gamma[s]$-$\Gamma[t]$-bimodule.
Using Lemma \ref{bil}, similar to the argument in the proof of Theorem \ref{covA}, we have

\begin{lemma}\label{Mbim}
$M[s+1,s] \simeq D\Gamma[s]$ as right $ \Gamma[s] $-module and $M[s+1,s] \simeq D\Gamma[s+1]$ as left $ \Gamma[s+1] $-module.
\end{lemma}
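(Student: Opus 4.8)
The plan is to deduce both isomorphisms from the non-degenerate bilinear form of Lemma~\ref{bil}, applied to the self-injective algebra $\Lambda\# k\mathbb{Z}^*$ (self-injective of Loewy length $l+1$ by Theorem~\ref{lsquiver}), in the same spirit as the displayed computation in the proof of Theorem~\ref{covA}. The combinatorial key, which I would establish first, is that the Nakayama translation $\overline{\tau}(i,n)=(\tau i,n-l)$ matches the windows $I[s]$ and $I[s+1]$: since $\overline{\tau}^{-1}(i,n)=(\tau^{-1}i,n+l)$ and $I[s]=\{(i,n)\mid (s-1)l\le n<sl\}$, the map $\overline{\tau}^{-1}$ restricts to a bijection $I[s]\to I[s+1]$, and dually $\overline{\tau}$ restricts to a bijection $I[s+1]\to I[s]$. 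This is what converts the socle-pairing of $\Lambda\# k\mathbb{Z}^*$ into a pairing between the two windows.

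For the right module statement I would decompose both sides into vertex-to-vertex homogeneous pieces. Writing $a,b$ for vertices in $I[s]$, Lemma~\ref{bil} gives for each $t$ a non-degenerate pairing $e_{\overline{\tau}^{-1}a}(\Lambda\# k\mathbb{Z}^*)_{l-t}e_b\otimes e_b(\Lambda\# k\mathbb{Z}^*)_t e_a\to k$. The second factor, summed over $a,b\in I[s]$ and $0\le t\le l-1$, is exactly $\Gamma[s]$, while the first factor runs over the pieces $e_{\overline{\tau}^{-1}a}(\Lambda\# k\mathbb{Z}^*)e_b$ with $\overline{\tau}^{-1}a\in I[s+1]$ and $b\in I[s]$, which is exactly $M[s+1,s]$ by the bijection above (the degree-$t$ part of $\Gamma[s]$ pairing with the degree-$(l-t)$ part of $M[s+1,s]$). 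This defines a $k$-linear map $\Phi\colon M[s+1,s]\to D\Gamma[s]$ by $\Phi(m)(x)=(m,x)$, bijective by non-degeneracy on each finite-dimensional piece. The multiplicative property $(m\gamma,x)=(m,\gamma x)$ then reads $\Phi(m\gamma)=\Phi(m)\gamma$ in $D\Gamma[s]$, so $\Phi$ is a right $\Gamma[s]$-module isomorphism.

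For the left module statement I would run the same argument with the roles of the two factors exchanged, now using the bijection $\overline{\tau}\colon I[s+1]\to I[s]$. For $a\in I[s]$ and $c\in I[s+1]$ the pairing $e_{\overline{\tau}^{-1}a}(\Lambda\# k\mathbb{Z}^*)e_c\otimes e_c(\Lambda\# k\mathbb{Z}^*)e_a\to k$ has first factor in $\Gamma[s+1]$ (since both $\overline{\tau}^{-1}a$ and $c$ lie in $I[s+1]$) and second factor in $M[s+1,s]$; as $a,c$ vary these first factors exhaust $\Gamma[s+1]$. The resulting map $\Psi\colon M[s+1,s]\to D\Gamma[s+1]$, $\Psi(m)(\gamma)=(\gamma,m)$, is bijective, and $(\gamma\gamma',m)=(\gamma,\gamma'm)$ shows $\Psi(\gamma'm)=\gamma'\Psi(m)$, so $\Psi$ is a left $\Gamma[s+1]$-module isomorphism.

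The routine part is the decomposition into homogeneous pieces and the observation that the module actions preserve the windows, which is immediate from $M[s+1,s]=e_{I[s+1]}(\Lambda\# k\mathbb{Z}^*)e_{I[s]}$, $\Gamma[s]=e_{I[s]}(\Lambda\# k\mathbb{Z}^*)e_{I[s]}$ and $\Gamma[s+1]=e_{I[s+1]}(\Lambda\# k\mathbb{Z}^*)e_{I[s+1]}$. The one point that requires genuine care, and which I expect to be the main obstacle, is the index bookkeeping: one must verify that under $\overline{\tau}^{\pm1}$ the relevant source and target vertices land in the correct window and that the degrees match up ($t$ against $l-t$), so that Lemma~\ref{bil} applies piece by piece and the pairings assemble into the claimed module isomorphisms rather than a mere equality of dimensions.
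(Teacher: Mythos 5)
Your proposal is correct and takes essentially the same route as the paper, whose entire proof of this lemma is the remark that it follows ``using Lemma~\ref{bil}, similar to the argument in the proof of Theorem~\ref{covA}'': you apply exactly that bilinear form to the self-injective algebra $\Lambda\# k\mathbb Z^*$ (Theorem~\ref{lsquiver}) and use the multiplicative property to get module-linearity. Your write-up in fact supplies the details the paper omits, notably the window bookkeeping $\overline{\tau}^{-1}(I[s])=I[s+1]$ and the degree matching $t\leftrightarrow l-t$.
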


Now identify $\Gamma[s]$ with $\Gamma[0] = \Lambda^N$ for all $s$, using the above isomorphisms, then $M[s+1,s]$ are identified with $D\Lambda^N$, for all $s$.
The following theorem follows from Lemma ~\ref{nearrep}.

\begin{thm}\label{repetitive}
$\Lambda \# k \mathbb Z^* \simeq \widehat{\Lambda^N}$ is the repetitive algebra of $\Lambda^N$.
\end{thm}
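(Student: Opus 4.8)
The plan is to read off the isomorphism directly from the near-triangular matrix description of $\Lambda \# k\mathbb{Z}^*$ provided by Lemma~\ref{nearrep}, and to match it term by term with the standard matrix presentation of the repetitive algebra. Recall that, by definition, $\widehat{\Lambda^N}$ is the bi-infinite band matrix algebra carrying a copy of $\Lambda^N$ in each diagonal slot, a copy of the bimodule $D\Lambda^N$ in each subdiagonal slot, zeros elsewhere, with multiplication inherited from the diagonal algebra actions on the off-diagonal bimodules and with the rule that any product of two subdiagonal entries is zero. Lemma~\ref{nearrep} already displays $\Lambda \# k\mathbb{Z}^*$ in exactly this band shape, with $\Gamma[s]$ on the diagonal and $M[s+1,s]$ on the subdiagonal, so the entire content of the theorem is to check that the diagonal and off-diagonal data agree with those of $\widehat{\Lambda^N}$ after a coherent choice of identifications.

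First I would identify every diagonal block with $\Lambda^N$. Since $Q[s]$ is a shift of $Q^N$, the shift isomorphisms $\psi_{s,0}\colon \Gamma[s]\to \Gamma[0]=\Lambda^N$ are algebra isomorphisms, and I would use them to pin all the $\Gamma[s]$ to a single copy of $\Lambda^N$. Next, for the subdiagonal blocks I would invoke Lemma~\ref{Mbim}, which gives $M[s+1,s]\simeq D\Gamma[s]$ as a right $\Gamma[s]$-module and $M[s+1,s]\simeq D\Gamma[s+1]$ as a left $\Gamma[s+1]$-module; transporting both along the relevant $\psi$'s turns each $M[s+1,s]$ into $D\Lambda^N$. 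Finally, the multiplication: Lemma~\ref{Mst} guarantees $M[s,t]=0$ unless $t\in\{s,s-1\}$ and that products of two consecutive subdiagonal blocks vanish, which is precisely the vanishing of products of two off-diagonal entries in $\widehat{\Lambda^N}$. With the diagonal actions on $D\Lambda^N$ matching by construction, assembling these identifications over all $s\in\mathbb{Z}$ produces the desired algebra isomorphism.

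The hard part will be the coherence of the bimodule identification: the right-module isomorphism $M[s+1,s]\simeq D\Gamma[s]$ and the left-module isomorphism $M[s+1,s]\simeq D\Gamma[s+1]$ must be chosen simultaneously so that, after transport by $\psi_{s,0}$ and $\psi_{s+1,0}$, the single space $M[s+1,s]$ becomes $D\Lambda^N$ as a genuine $\Lambda^N$-bimodule rather than merely as a one-sided module. This is exactly the place where the non-degenerate multiplicative bilinear form of Lemma~\ref{bil} (already underlying Lemma~\ref{Mbim}) does the real work, since it is what ties the left and right actions together compatibly. I would also record, as a consistency check, that no twist appears here: the twisting automorphism $\sigma$ of Theorem~\ref{TE} is a feature of passing to the period-one orbit algebra $\Lambda^T$, whereas the repetitive algebra is the full untwisted band, so the untwisted identification $M[s+1,s]\simeq D\Lambda^N$ is precisely what is required. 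Once the bimodule identification is verified to be two-sided and the triple-product vanishing is in hand, the band form of Lemma~\ref{nearrep} collapses onto the definition of $\widehat{\Lambda^N}$ with nothing left to compute.
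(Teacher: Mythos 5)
Your proposal is correct and follows essentially the same route as the paper: the paper likewise reads the isomorphism off the band-matrix form of Lemma~\ref{nearrep}, identifying the diagonal blocks $\Gamma[s]$ with $\Lambda^N$ via the shift isomorphisms $\psi_{s,t}$ and the subdiagonal blocks $M[s+1,s]$ with $D\Lambda^N$ via Lemma~\ref{Mbim}, with Lemma~\ref{Mst} supplying the vanishing of products of off-diagonal entries. If anything, your explicit insistence that the one-sided isomorphisms of Lemma~\ref{Mbim} be made coherent into a genuine two-sided identification $M[s+1,s]\simeq D\Lambda^N$ (via the bilinear form of Lemma~\ref{bil}) is more careful than the paper's own one-line conclusion, which simply declares the identifications.
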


Since $\Lambda^N = \bigoplus_{i}(\Lambda^N,i)$, where $i$ runs over a path of length $d-1$.
So we have $\Lambda \# k \mathbb Z^* = (\bigoplus_{i}(\Lambda^N,i))\sphat =\bigoplus_{i}\widehat{(\Lambda^N,i)}$.
Obviously, $\widehat{(\Lambda^N,i)}$ is an algebra with bound quiver $(\overline{Q},i)$ for each $i$, so these direct summands of $\Lambda \# k \mathbb Z^*$ are isomorphic.
Since $\overline{Q}$ has $d$ connected components, so we have that.

\begin{prop}\label{directsum}
$\Lambda \# k \mathbb Z^* =\bigoplus_{i}\widehat{(\Lambda^N,i)}$
is a direct sum of $d$ isomorphic algebras.
\end{prop}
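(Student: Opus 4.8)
The plan is to combine Theorem~\ref{repetitive} with the product decomposition of $\Lambda^N$ recorded in Proposition~\ref{sptrqv}, and then read off the count and the isomorphisms from the connected-component structure of $\overline{Q}$. By Theorem~\ref{repetitive} we already have $\Lambda \# k\mathbb Z^* \simeq \widehat{\Lambda^N}$, so everything reduces to understanding the repetitive algebra of
$$\Lambda^N = \bigoplus_{1\le t\le d}(\Lambda^N,i_t),$$
where $i_1\to\cdots\to i_d$ is a chosen path of length $d-1$ in $Q$.

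The key step is to check that the repetitive algebra construction commutes with finite direct sums of algebras, so that $\widehat{\Lambda^N} \simeq \bigoplus_t \widehat{(\Lambda^N,i_t)}$. Using the matrix description of Lemma~\ref{nearrep}, I would split the diagonal entries $\Gamma[s]\simeq\Lambda^N$ as $\bigoplus_t(\Lambda^N,i_t)$ along the central idempotents cutting out the blocks, and observe via Lemma~\ref{Mbim} that the connecting bimodules decompose compatibly as $M[s+1,s]\simeq D\Lambda^N = \bigoplus_t D(\Lambda^N,i_t)$. The point is then that these splittings are simultaneously respected: since the component quivers $(Q^N,i_t)$ are pairwise disjoint there are no nonzero homomorphisms between distinct blocks, so both the $\Gamma[s]$-actions on $M[s+1,s]$ and the multiplications appearing in Lemma~\ref{nearrep} are block-diagonal. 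Hence the infinite matrix of Lemma~\ref{nearrep} breaks into $d$ independent infinite matrices, each of which is the repetitive algebra $\widehat{(\Lambda^N,i_t)}$.

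It remains to identify and count the summands. Since $\Lambda \# k\mathbb Z^*$ has bound quiver $\overline{Q}$ by Theorem~\ref{lsquiver}, the algebra summands obtained above are unions of connected components of $\overline{Q}$; as $\widehat{(\Lambda^N,i_t)}$ contains the vertex $(i_t,0)$, it contains the component $(\overline{Q},i_t)$. Proposition~\ref{conn} gives exactly $d$ connected components and we have exactly $d$ summands, which forces each summand to be a single component $(\overline{Q},i_t)$. Proposition~\ref{cntiso} then shows that all connected components of $\overline{Q}$ are isomorphic, so the summands $\widehat{(\Lambda^N,i_t)}$ are pairwise isomorphic algebras, completing the proof.

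I expect the main obstacle to be the compatibility bookkeeping in the middle step: one must confirm that the block decomposition is respected simultaneously by all the diagonal algebras $\Gamma[s]$ and by all the connecting bimodules $M[s+1,s]$, and that the bimodule multiplications never couple different blocks. This ultimately rests on the disjointness of the component quivers $(Q^N,i_t)$, but it must be articulated with care, because the identifications of $\Gamma[s]$ with $\Lambda^N$ and of $M[s+1,s]$ with $D\Lambda^N$ are only made through the shift isomorphisms $\psi_{s,t}$, and one needs to know these preserve the block structure before the matrix in Lemma~\ref{nearrep} can be said to decompose.
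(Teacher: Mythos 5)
Your proof is correct and takes essentially the same route as the paper: decompose $\Lambda^N=\bigoplus_t(\Lambda^N,i_t)$ via Proposition~\ref{sptrqv}, pass through Theorem~\ref{repetitive} using the fact that the repetitive algebra construction respects this block decomposition, and identify the summands $\widehat{(\Lambda^N,i_t)}$ with the $d$ connected components $(\overline{Q},i_t)$, which are isomorphic by Propositions~\ref{conn} and~\ref{cntiso}. The block-compatibility bookkeeping you flag as the main obstacle is exactly the step the paper dismisses as obvious, so your version simply makes its argument explicit.
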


It follows from Lemma 2.1 of \cite{cx} that $\Lambda, \Lambda^T$ and $\iota (\Lambda')$ have equivalent categories of graded modules. We have the following theorem from Lemma ~\ref{nearrep}.

\begin{thm}\label{bgg} Let $\Lambda$ be a finite-dimensional graded self-injective algebra  over $k$.
Then we have equivalences among the following triangulated categories:

\begin{enumerate}
\item $\mathcal D^b(\Lambda^N)$, the bounded derived category
of the category $\bmod\, \Lambda^N$ of finite  generated $\Lambda^N$-modules.

\item $\underline{\mathrm{gr\,}}{\iota(\Lambda^N)}$, the stable category of finite generated graded $\iota (\Lambda^N)$-modules.

\item $\underline{\mathrm{gr\,}}{ \Lambda}$, the stable category of finite generated graded $\Lambda$-modules.

\item $\underline{\bmod}\,\Lambda \# k \mathbb Z^*$, the stable category of finite generated $\Lambda \# k \mathbb Z^*$-modules.
\end{enumerate}
 \end{thm}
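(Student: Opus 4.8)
The plan is to deduce the mutual equivalence of the four categories from a connected chain of triangle equivalences, $(1)\simeq(2)$, $(2)\simeq(3)$ and $(1)\simeq(4)$, with Happel's theorem on trivial extensions and repetitive algebras of an algebra of finite global dimension serving as the backbone. The indispensable preliminary is that $\Lambda^N$ has finite global dimension: by Theorem~\ref{bei} it is the Beilinson algebra $b(\Lambda)$, whose bound quiver $Q^N$ is a full subquiver of $\overline{Q}$, and by Proposition~\ref{nocyc} the quiver $\overline{Q}$ contains no oriented cycle, hence neither does $Q^N$. A finite-dimensional algebra with acyclic Gabriel quiver is directed and therefore of finite global dimension, so Happel's machinery applies to $\Lambda^N$.

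Granting this, Happel's theorem \cite{h1,h2} supplies, for any algebra $A$ of finite global dimension, two triangle equivalences
$$\mathcal D^b(A) \simeq \underline{\mathrm{gr\,}}\iota(A) \quad\text{and}\quad \mathcal D^b(A) \simeq \underline{\bmod}\,\widehat{A},$$
where $\iota(A)$ carries its natural $\mathbb Z$-grading, with $A$ in degree $0$ and $DA$ in degree $1$, and $\widehat{A}$ is the repetitive algebra. Specializing to $A=\Lambda^N$, the first equivalence is precisely $(1)\simeq(2)$. For the second, Theorem~\ref{repetitive} identifies $\widehat{\Lambda^N}$ with $\Lambda \# k\mathbb Z^*$, so it reads $\mathcal D^b(\Lambda^N)\simeq \underline{\bmod}\,\Lambda\#k\mathbb Z^*$, which is $(1)\simeq(4)$. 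Thus three of the four vertices are already tied together through $\mathcal D^b(\Lambda^N)$.

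It remains to splice in $(3)$, the stable category $\underline{\mathrm{gr\,}}\Lambda$. Here I would invoke Chen's result: by Lemma~2.1 of \cite{cx} the graded module categories of $\Lambda$, of $\Lambda^T$, and of $\iota(\Lambda^N)$ are equivalent, where Theorem~\ref{bei} identifies $\Lambda^N$ with $b(\Lambda)$ and the corollary to Theorem~\ref{TE} identifies $\Lambda^T$ with the twisted trivial extension $\iota_\sigma(\Lambda^N)$. Since $\Lambda$ and $\iota(\Lambda^N)$ are graded self-injective, their graded module categories are Frobenius and their stable categories are triangulated; the equivalence of abelian graded module categories is exact and preserves the projective-injectives, so it descends to a triangle equivalence $\underline{\mathrm{gr\,}}\iota(\Lambda^N)\simeq\underline{\mathrm{gr\,}}\Lambda$, which is $(2)\simeq(3)$. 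As a cross-check, $(3)\simeq(4)$ also follows directly from the defining property of the smash product, since $\mathrm{gr}\,\Lambda\simeq\bmod\,\Lambda\#k\mathbb Z^*$ and both algebras are self-injective by Theorem~\ref{lsquiver}.

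Since acyclicity forcing finite global dimension is immediate and Theorem~\ref{repetitive} is already in hand, the \textbf{main obstacle} is the bookkeeping in passing from module-category equivalences to triangle equivalences of stable categories. Concretely I must check that the grading conventions match, comparing the degree-one part $DA$ of $\iota(\Lambda^N)$ with the internal grading of $\Lambda$ encoded in the near-repetitive decomposition of Lemma~\ref{nearrep}; that the twist by $\sigma$ separating $\iota_\sigma(\Lambda^N)\simeq\Lambda^T$ from the untwisted $\iota(\Lambda^N)$ does not obstruct the graded equivalence, which it does not, as it is absorbed by an algebra automorphism and is exactly the content of Lemma~2.1 of \cite{cx}; and that finite generation is preserved at every stage. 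Each intermediate equivalence is exact between Frobenius categories, hence automatically compatible with the stable triangulated structure, so no triangulated input beyond Happel's theorem is required.
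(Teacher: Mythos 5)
Your proposal is correct and takes essentially the same route as the paper: Chen's Lemma 2.1 of \cite{cx} supplies the graded comparison between $\Lambda$, $\Lambda^T$ and $\iota(\Lambda^N)$, while Happel's theorem combined with Theorem \ref{repetitive} handles both the derived category of $\Lambda^N$ and the smash product $\Lambda\#k\mathbb Z^*$. The only cosmetic differences are that you unfold Chen's Theorem 1.1 into Happel's equivalence $\mathcal D^b(\Lambda^N)\simeq\underline{\mathrm{gr\,}}{\iota(\Lambda^N)}$ plus the graded comparison, and that you justify $\mathrm{gl.dim}\,\Lambda^N<\infty$ via acyclicity of $Q^N$ (Proposition \ref{nocyc}) where the paper invokes the semisimplicity of $\Lambda_0$.
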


\begin{proof}
Since in our case,  the degree $0$ part of the algebra $\Lambda$, $\Lambda_0$ is semisimple and hence of finite global dimension, the equivalence of the first three categories follows from Lemma 2.1 and Theorem 1.1 of \cite{cx}. The equivalence of the first and the last categories follows from Theorem \ref{repetitive} and Theorem 4.9 of \cite{h1}
\end{proof}

\section{$\tau$-slices, $\tau$-slice Algebras and $\tau$-mutations}

In the last section, we discuss the Beilinson algebra defined in \cite{cx} using bound quiver and Nakayama translation.
Beilinson algebra is a solution to the problem of finding algebras of finite global dimension whose derived categories are equivalent to the stable category of a given graded self-injective algebra.
Now we give a systematical way of finding algebras with this property, and investigate the interrelation among these algebras.

Let $\overline{Q}$ be a directed stable quiver of Loewy length $l+1$ with Nakayama translation $\tau$, let $v \in Q_0$.
We define the {\em $\tau$-hammock starting at $v$} as the full subquiver with the vertex set $$H^v=\{ u \in Q_0 | \mbox{there is a bound path  from $v$ to $u$}\}. $$
Dually we define the {\em $\tau$-hammock $H_v$ ending at $v$}.
$H^{ v}$ is the support of the projective cover of the simple corresponding to the vertex $v$ and $H_v$ is the support of the injective envelope of the simple  corresponding to the vertex $v$.
The $\tau$-hammock starting at $\tau v$ coincides with the $\tau$-hammock ending at $v$, that is
$$H^{\tau v} =H_{v}.$$

Let  $Q$ be a finite stable bound quiver, and let $(\overline{Q},i_0)$ be a separated directed quiver of  $Q$.
Let $Q'$ be a  full bound subquiver of $(\overline{Q},i_0)$.
$Q'$ is called a {\em $\tau$-slice} of $Q$ provided that for each vertex $v $ of $(\overline{Q},i_0)$, the intersection of the $\tau$-orbit of $v$ and the vertex set of $Q'$ is a single-point set.
A vertex $v$ of a $\tau$-slice $Q'$ is called {\em $\tau$-initial} provided that $Q' \cap H^v =H^v \setminus \{\tau^{-1} v\}$.
A vertex $v$ is called {\em $\tau$-terminal} provided that $Q' \cap H_v =H_v\setminus \{\tau
v\}$.
Thus we have that  $H^v\setminus \{\tau^{-1} v\}$ is a full bound subquiver of $Q'$ if $v$ is $\tau$-initial and $H_v\setminus \{\tau v\}$ is a full bound subquiver of $Q'$ if $v$ is $\tau$-terminal.

A $\tau$-slice $Q^S$ of a stable bound quiver $Q$ is called a {\em complete $\tau$-slice} if it satisfies the following condition.
\begin{enumerate}
\item Each source of $Q^S$ is $\tau$-initial;
\item Each sink of $Q^S$ is $\tau$-terminal;
\item Assume that $v \to u$ is an arrow of $(\overline{Q},i_0)$.
    If $v$ is a vertex of $Q^S$, then either $u$ or $\tau u$ is a vertex of $Q^S$;
    if $u$ is a vertex of $Q^S$, then either $v$ or $\tau^{-1} v$ is a vertex of $Q^S$.
\end{enumerate}

Let $Q^S$ be a complete $\tau$-slice in $(\overline{Q},i_0)$, and let $r$ be an integer.
The full bound quiver $Q^S(r)$ with the vertex set $$Q^S(r)_0 = \{(j,m-r)|(j,m)\in Q^S_0\}$$ is a complete $\tau$-slice in $(\overline{Q},i'_0)$ for some vertex $i'_0$ and it is isomorphic to $Q^S$.
We call $Q^S(r)$ {\em a shift of $Q^S$}.
So we see that up to shift,  a complete $\tau$-slice is independent of the choice of the component of $\overline{Q}$.
We regard a complete $\tau$-slice in $(\overline{Q},i'_0)$ as a bound quiver with the relations induced from the relations of $(\overline{Q},i'_0)$.
The algebra defined by a complete $\tau$-slice is called a {\em $\tau$-slice algebra} of $Q$.

If we start with a finite dimensional graded self-injective algebra $\Lambda$.
We can construct its separated directed quiver $\overline{Q_{\Lambda}}$ and consider the complete $\tau$-slices. A $\tau$-slice algebra obtained in this manner is also called a {\em  $\tau$-slice algebra of $\Lambda$.}

Since a separated directed quiver contains no oriented cycle, so does its subquiver.
Hence one gets the following proposition.

\begin{prop}\label{fgd} A $\tau$-slice algebra of $\Lambda$ is subalgebra of $\Lambda\# k \mathbb Z^*$ of finite global dimension. \end{prop}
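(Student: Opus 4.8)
The plan is to treat the two assertions of the proposition separately: first that a $\tau$-slice algebra embeds as a subalgebra of $\Lambda\# k\mathbb Z^*$, and then that it has finite global dimension, the latter being driven by the absence of oriented cycles.

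For the embedding I would first reduce to a single connected component. By Theorem \ref{lsquiver} the bound quiver of $\Lambda\# k\mathbb Z^*$ is $\overline Q$, and by Proposition \ref{directsum} the algebra splits as $\bigoplus_i \widehat{(\Lambda^N,i)}$, where each summand is $k((\overline Q,i))$. A complete $\tau$-slice $Q^S$ lies inside one component $(\overline Q,i_0)$, so it suffices to embed $k(Q^S)$ into $k((\overline Q,i_0))$, which is a direct summand and hence a subalgebra of $\Lambda\# k\mathbb Z^*$. Since $Q^S$ is a full subquiver, the span of the bound paths of $Q^S$ is closed under the multiplication of $k((\overline Q,i_0))$: the composite of two composable paths lying in $Q^S$ again uses only arrows of $Q^S$. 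The point needing care, exactly as in Lemma \ref{embed}, is that the relations induced on $Q^S$ generate precisely the kernel of $kQ^S \to k((\overline Q,i_0))$, i.e. that no relation of $\overline Q$ involving vertices outside $Q^S$ imposes an extra relation among paths of $Q^S$. I would establish this from the convexity of a complete $\tau$-slice inside $(\overline Q,i_0)$ together with the homogeneity of $\overline\rho$: if two vertices of $Q^S$ are joined by a path of $\overline Q$, convexity forces every parallel path, hence every term of a homogeneous relation between them, to lie in $Q^S$, so such a relation is already an induced relation of $Q^S$.

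For finite global dimension I would argue as the sentence preceding the proposition suggests. First, $Q^S$ has finitely many vertices, since a complete $\tau$-slice meets each of the finitely many $\tau$-orbits of $(\overline Q,i_0)$ exactly once; and because $\Lambda\# k\mathbb Z^*$ has Loewy length $l+1$, every path of length greater than $l$ vanishes, so $k(Q^S)$ is finite-dimensional. Next, by Proposition \ref{nocyc} the quiver $\overline Q$, and therefore its full subquiver $Q^S$, has no oriented cycle. A finite-dimensional algebra whose Gabriel quiver is acyclic has finite global dimension: choosing a topological ordering of the vertices so that every arrow points forward, one shows by downward induction that each simple has finite projective dimension, using that the radical of each indecomposable projective $P_v$ has composition factors $S_u$ only for $u$ strictly later than $v$; a sink gives a simple projective module, and $\mathrm{pd}\,S_v \le 1+\max_u \mathrm{pd}\,S_u$ taken over these later factors. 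As there are finitely many simples, the global dimension is finite.

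The main obstacle is the subalgebra identification rather than the homological bound: verifying that the induced relations suffice requires the convexity of the complete $\tau$-slice, whereas once finite-dimensionality and acyclicity are in hand the finiteness of global dimension is the standard argument for a directed algebra. I would therefore spend most of the effort pinning down convexity from the defining conditions of a complete $\tau$-slice, namely the $\tau$-initial and $\tau$-terminal conditions on its sources and sinks and the one-step condition on arrows, and then check that the homogeneity of $\overline\rho$ transfers relations into $Q^S$ cleanly.
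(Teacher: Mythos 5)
Your proof is correct, and for the finite global dimension half it is exactly the paper's argument: the paper's entire justification of Proposition \ref{fgd} is the sentence preceding it, that a separated directed quiver contains no oriented cycle (Proposition \ref{nocyc}), hence so does the full subquiver $Q^S$, and finiteness of global dimension follows by the standard directed-algebra induction you spell out. Where you differ is in the subalgebra half: the paper offers no argument at this point, implicitly treating the embedding of $k(Q^S)$ into $\Lambda\# k\mathbb Z^*$ as clear from the definition of a $\tau$-slice algebra by induced relations, and only substantiates it later, in Section 7, through precisely the two facts you invoke --- convexity of a complete $\tau$-slice (Proposition \ref{convex}, proved by induction on $\tau$-mutations via Lemma \ref{refl}) and the statement that a convex bound subquiver yields a subalgebra (Lemma \ref{convsub}); these are then used in the proof of Theorem \ref{slck} to realize $\Lambda^S$ as $e^S\Lambda^{r+1,T}e^S$. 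So you have front-loaded Section 7's machinery to close a gap the paper leaves open at the statement's location; this is legitimate and not circular, since Proposition \ref{convex} depends only on Lemma \ref{refl} and the homogeneity of $\overline{\rho}$, not on Proposition \ref{fgd}. Your version buys a complete, self-contained justification of the word ``subalgebra''; the paper's version buys brevity at the cost of deferring that point. One small caution: your phrasing of convexity should track the paper's definition, which requires as hypothesis a bound path between the two vertices \emph{with all its vertices already in} $Q^S$ before concluding that every bound path between them lies in $Q^S$; with that hypothesis stated, your claim that homogeneous relations of $\overline{Q}$ whose terms touch $Q^S$ restrict cleanly to induced relations is exactly the content of the paper's Lemma \ref{convsub}.
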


Let $Q^S$ be a  complete $\tau$-slice in a separated directed quiver of  graded self-injective algebra $\Lambda$.
The number $d(Q^S) = max\{n |(i,n) \in Q^S\} - min\{n |(i,n) \in Q^S\} $ is called {\em the depth} of $Q^S$.
If $Q$ is finite, then $Q^S$ is finite and contains no oriented cycle, we may shift it in $\overline{Q}$ such that $min\{n |(i,n) \in Q^S\} =0$.

We have that $d(Q^S) \ge l-1$, and when $d(Q^S) =l- 1$, we call $Q^S$ an {\em initial $\tau$-slice.}
Clearly, we have the following proposition by shifting.

\begin{prop}\label{inialg}
$Q^S$ is an initial complete $\tau$-slice if and only if $Q^S \simeq (Q^N,i_0)$ for some vertex $i_0$.
\end{prop}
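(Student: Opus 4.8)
The plan is to prove both implications by comparing vertex sets, exploiting the fact that inside the separated directed quiver each $\overline{\tau}$-orbit advances the degree $n$ by exactly $l$ at every step. First I would record the crucial observation that $\overline{\tau}$ preserves each connected component of $\overline{Q}$: by axiom (2) in the definition of a stable quiver there is a maximal bound path of length $l$ from $\tau i$ to $i$, which lifts to a path in $\overline{Q}$ from $\overline{\tau}(i,n)=(\tau i, n-l)$ to $(i,n)$, so these two vertices lie in the same component. Consequently, for any $(i,n)\in(\overline{Q},i_0)$ the entire orbit $\{(\tau^t i, n-tl)\mid t\in\mathbb Z\}$ sits inside $(\overline{Q},i_0)$, and its degrees range over all integers congruent to $n$ modulo $l$. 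Hence exactly one orbit member has degree in the window $\{0,1,\ldots,l-1\}$, which shows that $(Q^N,i_0)_0=\{(j,n)\in(\overline{Q},i_0)\mid 0\le n\le l-1\}$ is precisely a complete set of $\overline{\tau}$-orbit representatives.

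For the forward implication this finishes matters quickly. Given an initial complete $\tau$-slice $Q^S$, I would shift it so that $\min\{n\mid (i,n)\in Q^S\}=0$; since $d(Q^S)=l-1$, every vertex of $Q^S$ then has degree in $\{0,\ldots,l-1\}$, so $Q^S$ is contained in the window set $(Q^N,i_0)_0$. But $Q^S$, being a $\tau$-slice, meets each orbit in exactly one point, while each orbit already has exactly one representative inside the window; therefore $Q^S$ must contain that representative, forcing $Q^S=(Q^N,i_0)$ as vertex sets and hence as full bound subquivers. Note this direction uses only the $\tau$-slice property together with the depth hypothesis.

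For the converse I must verify that $(Q^N,i_0)$ really is a complete $\tau$-slice of depth $l-1$. The depth is immediate: $(i_0,0)$ contributes degree $0$, and truncating to length $l-1$ a maximal bound path of length $l$ starting at $i_0$ (which exists by axiom (2) applied to $\tau^{-1}i_0$) yields a vertex at degree $l-1$. Since an incoming (resp. outgoing) arrow lowers (resp. raises) the degree by one, and since in a connected stable quiver every vertex has both an incoming and an outgoing arrow, the sources of $(Q^N,i_0)$ are exactly the degree-$0$ vertices and the sinks exactly the degree-$(l-1)$ vertices. Condition (3) then follows by a boundary check: a vertex pushed just past degree $l-1$ is returned into the window by $\overline{\tau}$, and dually a vertex pushed below degree $0$ is returned by $\overline{\tau}^{-1}$.

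The part I expect to be the main obstacle is verifying that each source is $\tau$-initial and each sink $\tau$-terminal. For a source $v=(i,0)$ the $\tau$-hammock $H^v$ consists of the vertices reachable by bound paths from $v$, occupying degrees $0$ through $l$; here I would invoke axioms (2) and (3) — existence and uniqueness of the endpoint of a maximal bound path — to conclude that the unique vertex of $H^v$ at degree $l$ is $(\tau^{-1}i,l)=\tau^{-1}v$. Thus truncation at degree $l-1$ removes from $H^v$ exactly $\tau^{-1}v$, which is precisely the statement $Q^S\cap H^v=H^v\setminus\{\tau^{-1}v\}$ defining $\tau$-initial; the sink case is dual. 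Assembling these verifications shows $(Q^N,i_0)$ is a complete $\tau$-slice of depth $l-1$, completing the converse.
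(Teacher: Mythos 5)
Your proof is correct and follows the same route the paper intends: the paper disposes of this proposition with the single remark that it holds ``by shifting'' (after normalizing $\min\{n \mid (i,n)\in Q^S\}=0$), and your argument is exactly that shift-and-compare strategy, fleshed out via the observation that $\overline{\tau}$ preserves components and each $\overline{\tau}$-orbit has a unique representative with degree in $\{0,\ldots,l-1\}$. Your additional verifications (sources/sinks sit at degrees $0$ and $l-1$, $\tau$-initial/terminal via axioms (2) and (3), the boundary check for condition (3)) are sound and simply make explicit what the paper treats as clear.
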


Let $Q^S$ be a complete $\tau$-slices in $(\overline{Q}, i_0)$ and let $(i,m)$ be a sink of $Q^S$.
Define the {\em $\tau$-mutation} $s_i^-(Q^S)$ of $Q^S$ at $i$ as the full bound subquiver in $(\overline{Q}, i_0)$ obtained by replacing the vertex $(i,m)$ by its Nakayama translation $(\tau i, m-l)$.
Dually, for a source $(j,m)$ of $S$, we define the {\em $\tau$-mutation} $ s_j^+(Q^S)$ of $Q^S$ at $j$ as the full bound subquiver of $(\overline{Q},i_0)$ obtained by replacing the vertex $(j,m)$ by its inverse Nakayama translation $(\tau^{-1} j, m+l)$.
Clearly,  a $\tau$-mutation of a complete $\tau$-slice in $(\overline{Q}, i_0)$ is again a complete $\tau$-slice in $(\overline{Q}, i_0)$.
If $(i,m)$ is a sink of $S$, then $s_i^+s_i^- Q^S = Q^S$, and if $(i,m)$ is a source of $Q^S$, then $s_i^-s_i^+ Q^S = Q^S$.

Let $Q^S$ be a complete $\tau$-slice and let $\sigma$ be a $\tau$-mutation defined on $Q^S$.
If $\Lambda'$ and $\Lambda''$ are $\tau$-slice algebras defined by $Q^S$ and $\sigma Q^S$, respectively, $\Lambda''$ is called a {\em  $\tau$-mutation of  $\Lambda'$}, and write it as $$\Lambda''=\sigma \Lambda'.$$

The following lemma follows easily from induction on the depth of the complete $\tau$-slices and on the number of pairs of vertices which reach the maximal depth.

\begin{lemma}\label{refl}
Let $Q$ be a finite stable bound quiver with Nakayama translation $\tau$, and let $Q^S$ be a complete $\tau$-slice of $Q$.
Then there is a sequence $\sigma_1,  \ldots, \sigma_r$ of $\tau$-mutations such that $\sigma_r \cdots \sigma_1 Q^S$ is an initial complete $\tau$-slice $(Q^N,i_0)$,  up to a shift.
\end{lemma}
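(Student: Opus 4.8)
The plan is to reduce the depth of $Q^S$ to its smallest possible value $l-1$ by repeatedly mutating at sinks of maximal height, so that the resulting slice is initial and hence, by Proposition \ref{inialg}, isomorphic to $(Q^N,i_0)$ up to a shift. First I would normalize by shifting $Q^S$ inside $(\overline{Q},i_0)$ so that $\min\{n\mid (i,n)\in Q^S\}=0$; a shift does not change the isomorphism type, so this is harmless and lets me track the depth through the single quantity $M=\max\{n\mid (i,n)\in Q^S\}$, where $d(Q^S)=M\ge l-1$ by the remark preceding the statement.

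The key observation is that every vertex $(i,M)$ attaining the top height is automatically a sink of $Q^S$. Indeed, in the separated directed quiver $\overline{Q}$ every arrow raises the second coordinate by one, so any arrow out of $(i,M)$ would land at height $M+1$, outside $Q^S$. Being a sink of the complete $\tau$-slice, $(i,M)$ is $\tau$-terminal, so the $\tau$-mutation $s_i^-$ is defined and replaces $(i,M)$ by $(\tau i,M-l)$. Since $M\ge l$ whenever $M>l-1$, the inserted vertex sits at height $M-l\ge 0$, i.e. strictly below the top layer and not below the current minimum. By the paragraph introducing $\tau$-mutations, $s_i^-(Q^S)$ is again a complete $\tau$-slice in $(\overline{Q},i_0)$, so the move is legitimate.

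I would then run an induction with the lexicographic measure $(M,c)$, where $c$ is the number of vertices of $Q^S$ at height $M$. A mutation $s_i^-$ at a top sink deletes one vertex from the top layer and inserts a vertex strictly below it, hence strictly decreases $c$ without raising $M$ and without lowering the minimum (which stays $0$ since the untouched height-$0$ vertices remain). Once $c$ reaches $0$ the maximal height drops to at most $M-1$, so after finitely many mutations the depth strictly decreases. Because the depth can never fall below $l-1$, the process must terminate at a complete $\tau$-slice of depth exactly $l-1$, that is, an initial $\tau$-slice; Proposition \ref{inialg} then identifies it with $(Q^N,i_0)$ up to a shift. The desired sequence $\sigma_1,\ldots,\sigma_r$ is just the list of mutations used.

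The one point demanding care is the termination bookkeeping: I must confirm that mutating a top sink never produces a vertex above height $M$ and never pushes the minimum height below its pinned value $0$, so that the depth is controlled solely by the shrinking top layer, and that completeness is preserved at every step. The latter is precisely the already-recorded fact that $\tau$-mutations send complete $\tau$-slices to complete $\tau$-slices, and the former is immediate from the sign of $M-l$. Granting these, the lexicographic descent is routine, which is exactly why the statement can be asserted to follow \emph{easily} by double induction on the depth and on the size of the top layer.
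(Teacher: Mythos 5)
Your proposal is correct and takes essentially the same approach as the paper, which dispatches the lemma in one line as a double induction on the depth and on the number of (pairs of) vertices realizing the maximal depth; your lexicographic measure $(M,c)$, with mutations $s_i^-$ at top-height sinks, is exactly that induction carried out explicitly. The verifications you supply --- top-height vertices are automatically sinks, the inserted vertex lands at height $M-l\ge 0$ so the minimum stays pinned at $0$, and completeness is preserved by the paper's own remark on $\tau$-mutations --- are precisely the bookkeeping the paper leaves to the reader.
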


The following theorem asserts that the trivial extensions of $\tau$-slice algebras are invariant under the $\tau$-mutations.

\begin{thm}\label{trexs} The trivial extensions of all the  $\tau$-slice algebras of  a finite stable bound quiver are isomorphic.
\end{thm}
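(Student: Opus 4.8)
The plan is to prove invariance of the trivial extension under a single $\tau$-mutation and then promote this to all $\tau$-slices by transitivity. By Lemma~\ref{refl}, any complete $\tau$-slice can be carried to an initial one $(Q^N,i_0)$ by a finite sequence of $\tau$-mutations, so the $\tau$-mutations act transitively on complete $\tau$-slices up to shift; since isomorphism is transitive and $s_i^+s_i^-=\mathrm{id}$ (respectively $s_i^-s_i^+=\mathrm{id}$), it suffices to treat one mutation at a sink. Thus the whole statement reduces to: if $(i,m)$ is a sink of a complete $\tau$-slice $Q^S$ and $s_i^-Q^S$ replaces $(i,m)$ by $\overline{\tau}(i,m)=(\tau i,m-l)$, then $\iota(k(Q^S))\simeq\iota(k(s_i^-Q^S))$.

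The guiding idea is to realize the trivial extension through the repetitive algebra, extending Theorem~\ref{repetitive} from the initial slice to an arbitrary one. First I would show that for every complete $\tau$-slice $Q^S$ the repetitive algebra $\widehat{k(Q^S)}$ is again $\Lambda\#k\mathbb Z^{*}=k(\overline{Q})$. Indeed $Q^S$ is a fundamental domain for the Nakayama translation $\overline{\tau}$ acting on $\overline{Q}$, and the $\mathbb Z$-fold repetition of $k(Q^S)$ glued along copies of $Dk(Q^S)$ reconstructs exactly the bound quiver $\overline{Q}$: the gluing (returning) arrows are precisely the arrows of $\overline{Q}$ that leave $Q^S$, and by condition~(3) in the definition of a complete $\tau$-slice such an arrow re-enters $Q^S$ after applying $\overline{\tau}$. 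The requisite bimodule identifications $M[s+1,s]\simeq D\Gamma[s]$ are supplied by Lemma~\ref{nearrep} together with Lemma~\ref{bil}, exactly as in the proof of Theorem~\ref{repetitive}. Consequently $\iota(k(Q^S))$ is the orbit algebra of $k(\overline{Q})$ by the canonical grading-shift automorphism $g$ that moves each fundamental domain to the adjacent one.

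The decisive observation is then that $g$, viewed as a permutation of the vertices and arrows of the fixed quiver $\overline{Q}$, coincides with the combinatorial Nakayama translation $\overline{\tau}$ and is therefore \emph{independent} of the chosen section $Q^S$; a slice only records a choice of fundamental domain. Since $Q^S$ and $s_i^-Q^S$ are two fundamental domains for one and the same group $\langle g\rangle$ acting on the one quiver $\overline{Q}$, the two presentations of the orbit algebra are identified by the identity automorphism of $k(\overline{Q})$, giving $\iota(k(Q^S))\simeq k(\overline{Q})/\langle g\rangle\simeq\iota(k(s_i^-Q^S))$. Concretely, under the mutation the maximal bound path ending at the sink $(i,m)$, which is a socle generator and hence a returning-arrow datum of $\iota(k(Q^S))$, is converted into an ordinary arrow emanating from $(\tau i,m-l)$, and conversely; the local recombination of arrows around the mutated vertex is governed by conditions~(1)--(3) for a complete $\tau$-slice.

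I expect the main obstacle to be the bookkeeping of relations and returning arrows across this single move, and in particular the distinction between the ordinary (symmetric) trivial extension and the twisted one appearing in Theorem~\ref{TE}. One must verify that the non-degenerate bilinear pairing of Lemma~\ref{bil} identifies the dual bases attached to $(i,m)$ and to $(\tau i,m-l)$ compatibly, so that the symmetrizing form of $\iota(k(Q^S))$ is carried to that of $\iota(k(s_i^-Q^S))$ and the untwisted structure is preserved, rather than merely the structure up to the automorphism $\sigma$ of Theorem~\ref{TE}. It is precisely here that the self-injectivity of $\Lambda\#k\mathbb Z^{*}$ established in Theorem~\ref{lsquiver} and the duality of Lemma~\ref{bil} do the real work, ensuring that the isomorphism type of the orbit algebra is insensitive to which fundamental domain presents it.
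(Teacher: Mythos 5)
Your reduction to a single mutation via Lemma~\ref{refl} matches the paper's first step, and realizing the repetitive algebra of any $\tau$-slice algebra as (a summand of) $\Lambda\#k\mathbb Z^*$ is indeed available (Theorem~\ref{repetitiveslice}). The gap is your ``decisive observation.'' From $\widehat{k(Q^S)}\simeq k(\overline{Q})$ you obtain $\iota(k(Q^S))$ as the quotient of $k(\overline{Q})$ by a shift automorphism $g_S$, but $g_S$ is only defined after choosing the identifications $\psi_{s,t}\colon\Gamma^S[s]\to\Gamma^S[t]$ and $M^S[s+1,s]\simeq D\Gamma^S[s]$, and these choices depend on the slice. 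Knowing that $g_S$ induces the permutation $\overline{\tau}$ on vertices and arrows does not make it slice-independent as an \emph{algebra} automorphism: two automorphisms inducing the same permutation of the quiver can differ by scalars on arrows, and the quotients by the groups they generate are then an ordinary versus a twisted trivial extension, which need not be isomorphic. The paper itself signals exactly this danger: the quotient of $k(\overline{Q})$ by the Nakayama group is $\iota_\sigma(\Lambda')$ with an a priori nontrivial twist $\sigma$ (Theorem~\ref{TE}), not $\iota(\Lambda')$. If your identity-automorphism argument were valid as stated, it would prove in passing that the twist in Theorem~\ref{TE} can always be removed --- a claim the paper carefully does not make.

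You do name this obstacle in your closing paragraph, but naming it is not discharging it: the compatibility of the dual-basis identifications of Lemma~\ref{bil} across the mutated vertex is where all the content lies, and it is precisely what the paper's Lemma~\ref{mmain} proves by hand. There, the induction step along the mutation sequence exhibits $\Lambda^{t-1}$ inside $\iota(\Lambda^{t})$ as the subalgebra $(\Lambda^*+e_{(i_0,m)}\iota(\Lambda^{t})_0e_{(i_0,m)})\ltimes M$ with $M\simeq e_{(i_0,m)}\Lambda^{t-1}$, then shows the complementary piece $\Lambda^{t}e_{(\tau i_0,m-l+1)}+D\Lambda^*$ is isomorphic to $D\Lambda^{t-1}$ as a $\Lambda^{t-1}$-bimodule, so that $\iota(\Lambda^{t})$ is an iterated trivial extension equal to $\iota(\Lambda^{t-1})=\Lambda^E$. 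That explicit bimodule bookkeeping, carried out at the level of the algebra rather than the quiver, is the step your outline replaces with the unproved assertion that the choice of fundamental domain is irrelevant. To repair your proof you would either have to perform that verification (essentially reproducing Lemma~\ref{mmain}), or show that the slice-adapted shift automorphisms coincide up to an isomorphism of quotients uniformly over slices --- neither of which follows from the results you cite.
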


\begin{proof}  Let $Q^S$ be a complete $\tau$-slice of a stable bound quiver $Q$.
Then by Lemma ~\ref{refl}, there is a sequence $\sigma_1,\ldots,\sigma_r$ of $\tau$-mutations such that $Q^S= \sigma_r  \sigma_{r-1} \cdots\sigma_1 (Q^N,v)$ for an initial complete $\tau$-slice $Q'=(Q^N,v)$.

Let $Q^t = \sigma_t \cdots\sigma_1 (Q^N,v)$ and let $\Lambda^t = k(Q^t)$ be the algebra given by the bound quiver $Q^t$.

Let $\Lambda' = k(Q')$, and let  $\Lambda^E=\iota (\Lambda') $ be its trivial extension.
Let $Q^E$ be the bound quiver of $\Lambda^E$.
We use the same notation for the vertex of $Q'$ and $Q^E$.
Take the second indices for vertices of $Q^t$ from $\mathbb Z$, and second indices for the vertices of $Q^E$  from $\mathbb Z/(l+1)\mathbb Z$.

Our theorem follows immediately from the following lemma.
\end{proof}

\begin{lemma}\label{mmain} For $t=0,1,\ldots, r$, $Q^t$ is a full bound subquiver of $Q^E$, $Q^E$ is obtained from $Q^t$  by adding an arrow $(i,n)$ to $(j,n+1-l)$ whenever there is a linearly independent bound path of length $l-1$ from $(j,n+1-l)$ to $(i,n)$, and $$\iota (\Lambda^t)=\iota (k(Q^t)) \simeq k(Q^E) =\Lambda^E.$$
\end{lemma}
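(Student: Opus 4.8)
The plan is to prove the three assertions together by induction on $t$. The base case $t=0$ is half tautological: $Q^0=(Q^N,v)=Q'$ and $\Lambda^E=\iota(\Lambda')$ by definition, so $\iota(\Lambda^0)\simeq\Lambda^E$. The description of $Q^E$ is then the standard computation of the Gabriel quiver of a trivial extension $A\ltimes DA$: besides the arrows of $A$, one adjoins an arrow from $a$ to $b$ for each linearly independent maximal bound path from $b$ to $a$, these arrows forming the part of $\mathbf r/\mathbf r^2$ coming from $DA$ and being dual to the bisocle of $A$ via $e_b DA e_a\simeq D(e_aAe_b)$. For $A=\Lambda'$ I would use that every vertex of a stable bound quiver carries both incoming and outgoing arrows (it lies on a maximal bound path of length $l$), so in $(Q^N,v)$ sources occur only at level $0$ and sinks only at level $l-1$. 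Hence every maximal bound path has length $l-1$ and runs from some $(j,0)$ to some $(i,l-1)$, and the trivial extension adjoins exactly the arrows $(i,l-1)\to(j,0)$ — the asserted pattern at $n=l-1$ (compare Proposition \ref{orbitqv}).

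For the inductive step I would pass to the covering picture supplied by Theorem \ref{covA} and Theorem \ref{TE}, identifying $Q^E$ with the orbit quiver of $\overline Q$ under the Nakayama translation $\overline\tau$, so that its vertices are the $\overline\tau$-orbits and the quotient $\pi\colon\overline Q\to Q^E$ is a covering whose fibres are the $\overline\tau$-orbits of arrows. A complete $\tau$-slice meets every $\overline\tau$-orbit in a single vertex, so $\pi$ restricts to a bijection from the vertices of $Q^t$ onto those of $Q^E$; consequently the arrows of $Q^E$ partition, relative to each slice, into those lifting to internal arrows of the slice and those that do not, the latter being exactly the adjoined wrap-around arrows.

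Assuming the assertion for $Q^t$, let $\sigma_{t+1}=s_i^-$ mutate a sink $(i,m)$ (the source case is dual). Since $(i,m)$ is $\tau$-terminal, all its incoming arrows lie in $Q^t$, while for each arrow $(i,m)\to(g,m+1)$ leaving it condition (3) in the definition of a complete $\tau$-slice forces the slice representative of the target to be $\overline\tau(g,m+1)=(\tau g,m+1-l)$; embedding $(i,m)\to(g,m+1)$ in a maximal path shows this arrow is realized in $Q^t$ as a wrap-around arrow $(i,m)\to(\tau g,m+1-l)$ of the stated shape. Passing to $Q^{t+1}$ replaces $(i,m)$ by the $\tau$-initial source $\overline\tau(i,m)=(\tau i,m-l)$ of the same orbit: its outgoing arrows become internal, and the former incoming arrows of $(i,m)$ are reclassified as wrap-around arrows into $(\tau i,m-l)$. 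The point is that Corollary \ref{arrows} and the non-degenerate pairing of Lemma \ref{bil} match these two families arrow for arrow, so neither the vertex set nor the total arrow set of $Q^E$ changes; only the internal/wrap-around partition at the orbit of $(i,m)$ is altered. This re-establishes that $Q^{t+1}$ is a full bound subquiver of $Q^E$ whose wrap-around completion is $Q^E$, and the isomorphism $\iota(\Lambda^{t+1})\simeq\Lambda^E$ then follows as in Lemmas \ref{bimoduleM} and \ref{twist}: every bound path of $Q^E$ traverses a wrap-around arrow at most once, so these arrows span a square-zero ideal isomorphic as a $\Lambda^{t+1}$-bimodule to $D\Lambda^{t+1}$, whence $k(Q^E)=\Lambda^{t+1}\ltimes D\Lambda^{t+1}=\iota(\Lambda^{t+1})$.

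I expect the main obstacle to lie not in tracking the arrows but in tracking the relations across the mutation: one must verify that sliding the boundary vertex by one Nakayama period neither kills a defining relation of $k(Q^E)$ nor introduces a new one, i.e.\ that the bound paths through the affected vertex before and after mutation generate the same ideal. This is precisely where the multiplicative, non-degenerate bilinear form of Lemma \ref{bil} between $e_{\tau^{-1}i}\Lambda_{l-t}e_j$ and $e_j\Lambda_te_i$ is indispensable, since it identifies the mixed products of an internal bound path with a wrap-around arrow on the two sides and guarantees that the reclassified arrows carry the correct bimodule structure. Verifying this compatibility — in particular that no maximal bound path is lost or gained when the sink is moved to its Nakayama translate — is the technical heart of the argument.
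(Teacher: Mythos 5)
Your quiver-level bookkeeping is sound and essentially parallels the paper's: the arrow count matching the incoming arrows at the sink $(i,m)$ with the wrap-around arrows into $(\tau i,m-l)$ is exactly what the paper extracts from Lemma~\ref{bil} and Corollary~\ref{arrows}, and your base case agrees with the paper's. But the third assertion --- the algebra isomorphism $\iota(\Lambda^{t})\simeq\Lambda^E$ --- has a genuine gap, and it sits precisely at the point you yourself flag as ``the technical heart.'' Your argument identifies $Q^E$ with the orbit quiver of $\overline{Q}$ under $\overline\tau$ and then concludes via ``the wrap-around arrows span a square-zero ideal isomorphic as a $\Lambda^{t+1}$-bimodule to $D\Lambda^{t+1}$, as in Lemmas~\ref{bimoduleM} and~\ref{twist}.'' This conflates two different algebras: the orbit algebra of $(\overline{Q},i_0)$ is, by Theorem~\ref{TE}, the \emph{twisted} trivial extension $\iota_{\sigma}(\Lambda')$, and Lemma~\ref{twist} delivers only $M\simeq D{\Lambda'}^{\sigma}$ for some automorphism $\sigma$, not $M\simeq D\Lambda'$. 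The identification of $Q^E$ (the bound quiver of the untwisted $\iota(\Lambda')$) with the orbit bound quiver is valid for vertices and arrows but not, a priori, for relations; assuming it at the level of bound quivers begs the question. So your route proves at best $k(Q^{T}(i_0))\simeq\iota_{\sigma_{t+1}}(\Lambda^{t+1})$ --- an analogue of Theorem~\ref{TE} for arbitrary slices --- whereas the lemma asserts the untwisted isomorphism $\iota(\Lambda^t)\simeq\iota(\Lambda')$, and nothing in your sketch removes the twist.

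The paper avoids this trap by never comparing $\iota(\Lambda^{t})$ with an orbit algebra at all. Instead it works entirely inside the honest trivial extension $\iota(\Lambda^t)$ and re-decomposes it over the common subalgebra $\Lambda^*=k(Q^*)$ (the slice with the mutated vertex deleted): it exhibits the subalgebra $\Lambda^* + D\Lambda^{t}e_{(\tau i_0,m-l+1)}\simeq\Lambda^{t-1}$ and shows the complement $\Lambda^t e_{(\tau i_0,m-l+1)} + D\Lambda^*$ is a $\Lambda^{t-1}$-bimodule isomorphic to $D\Lambda^{t-1}$, so that the given multiplication on $\iota(\Lambda^t)$ itself realizes a trivial-extension structure $\iota(\Lambda^t)\simeq\Lambda^{t-1}\ltimes D\Lambda^{t-1}=\iota(\Lambda^{t-1})$. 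This explicit shuffle of which summands form the ``algebra part'' and which the ``bimodule part'' is exactly the untwisting you are missing: it produces the bimodule isomorphism with $D\Lambda^{t-1}$ on the nose, using the generation of $D\Lambda^{t}$ by $D\Lambda^{t}_{l-1}$ and the pairing of Lemma~\ref{bil}, rather than inferring it from the orbit-algebra picture. To repair your proof you would need either to carry out this re-decomposition, or to prove separately that the twist $\sigma$ arising in your square-zero-ideal argument can be normalized away --- neither of which is routine, and the latter is false in general for twisted trivial extensions.
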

\begin{proof}
We prove this lemma by induction on $t$.
Clearly, $Q^E$ is obtained from $Q^0=Q'$ by adding an arrow from vertices $(i,l-1)$ to $(j,0)$ whenever there is a linearly independent path of maximal length from $(j, 0)$ to $(i,l-1)$.
So the lemma holds when $t=0$.

Assume that $0 < t \le r$ and the lemma holds for $t-1$.

We may assume that $\sigma_t =s_{i_0}^-$, the other case is proved similarly.
There is a vertex $(i_0,m)$  such that $Q^{t}$ is obtained from $Q^{t-1}$ by removing the vertex $(i_0,m)$ with all the arrows ending at $(i_0,m)$
and adding the vertex $(\tau i_0,m-l)$ together with all the arrows starting from $(\tau i_0,m-l)$ in $\overline{Q}$.

Both $Q^{t-1}$ and $Q^t$  have the same set of vertices as $Q^E$
(after taking the image of their second indices in $ \mathbb Z/(l+1)\mathbb Z$).
So we may identify the degree zero part $$\Lambda^t_0 = \Lambda^{t-1}_0 = \Lambda^{E}_0.$$
The arrows of the two subquivers are the same except those concerning the vertices $(\tau i_0,m-l)$ and $(i_0,m)$.
$(\tau i_0,m-l)$ is a source in $Q^t$ and $(i_0,m)$ is a sink in $Q^{t-1}$.
Let $Q^{t-1,'}$ and $ Q^{t,'}$ be  the full bound quiver obtained from $Q^{t-1}$, and respectively $ Q^{t}$, by removing the vertex $(i_0,m)$, and respectively $(\tau i_0,m-l)$.
Then  $Q^{t-1,'}= Q^{t,'}$, denote it by $Q^*$.

By induction, $\Lambda^E = k(Q^E) \simeq \iota (\Lambda^{t-1})$, and $Q^E$ is obtained from $Q^{t-1}$ by adding an arrow $(i,n)$ to $(j,n-l+1)$ whenever there is a linearly independent bound path of length $l-1$ from $(j,n-l+1)$ to $(n,i)$.

Let $\Lambda^* = k(Q^*)$ be the algebra defined by the bound quiver $Q^*$, it is a subalgebra of both $\Lambda^{t-1}$ and $\Lambda^{t}$.
Denote by $\widetilde{\Lambda} =k( \overline{Q}, v)$ the algebra of the bound quiver $(\overline{Q},v) $.
It is a direct summand of $\Lambda\# k\mathbb Z^*$, so it is locally finite-dimensional self-injective algebra of Loewy length $l+1$.
$\Lambda^*, \Lambda^{t-1}$ and $\Lambda^{t}$ are embedded in $\widetilde{\Lambda}$ as subalgebras.

For the arrows ending at $(i_0,m)$ in $\iota (\Lambda^t)$, by Lemma \ref{bil}, we have that for any $(j,m-1)$
$$\arr{rllllll}{
& e_{(\tau i_0, m-l)} \iota (\Lambda^t)_{1}e_{(j,m-1)}   &\simeq & De_{(j,m-1)}\Lambda^{t}_{l-1}  e_{(\tau i_0,m-l)}  \\ \simeq & e_{(i_0,m)}\widetilde{\Lambda}e_{(j,m-1)} &= & e_{(i_0,m)}\Lambda^{t-1}_1 e_{(j,m-1)}\\ = & e_{(i_0,m)} \iota (\Lambda^{t-1})_1 e_{(j,m-1)} & \simeq & e_{(i_0,m)}\Lambda^E_1e_{(j,m-1)},}$$
as $\Lambda^t_0$-bimodules.
Note that $\Lambda^t_0 = \Lambda^E_0  $.
By identifying the vertex $(i_0, m)$ of $Q^{t-1}$ with the vertex $(\tau i_0,m-l)$ of $Q^t$, this implies that the number of the arrows from $(j,m-1)$ to $(i_0,m)$ in the quiver of $\iota(\Lambda^t)$ is the same as that of $Q^E$, and it equals the number of linearly independent bound paths of $Q^t$ of length $l-1$ from $(\tau i_0,m-l)$ to $(j,m-1)$.
Since $e_{(i,n)} \Lambda^t_{l-1} e_{(j,n+1-l)} = e_{(i,n)} \Lambda^{t-1}_{l-1} e_{(j,n+1-l)}$ for other pair $(j,n+1-l), (i,n)$ of vertices.
It follows from induction that $Q^E$ is obtained from $Q^t$  by adding an arrow $(i,n)$ to $(j,n+1-l)$ whenever there is a linearly independent bound path of length $l-1$ from $(j,n+1-l)$ to $(i,n)$.

Denote by $M$  the sub-$\Lambda^*$-bimodule of $\iota (\Lambda^t)$ generated by $ De_{(j,m-1)}\Lambda^{t}_{l-1}  e_{(\tau i_0,m-l)} $, it is isomorphic to $\bigoplus_{(j, n) \in Q^*_0} e_{( i_0,m)}\widetilde{\Lambda}e_{(j,n)} \simeq e_{(i_0,m)} \Lambda^{t-1}$.
Thus $\Lambda^{t-1}$ is isomorphic to a trivial extension
$$\Lambda^{t-1} \simeq (\Lambda^* + e_{( i_0,m)}\iota (\Lambda^{t})_0 e_{(i_0,m)}) \ltimes M,$$
and the right hand side is a subalgebra of $\iota (\Lambda^t)$.

Since $\Lambda^{t-1}$ and $\Lambda^t$ are graded algebras whose maximal bound paths having the same length $l-1$.
$\Lambda^t_{l-1} = \Lambda^*_{l-1} + e_{(i_0,l)}\Lambda^{t-1}_{l}$ and $\Lambda^{t-1}_{l-1}= \Lambda^*_{l-1} + \Lambda^{t-1}_{l-1}e_{(\tau i_0,0)}$,  $D \Lambda^{t-1}$ is  generated by $D \Lambda^{t-1}_{l-1} $ as $\Lambda^{t-1} $-bimodule,
and $D\Lambda^t$ is  generated by $D\Lambda^t_{l-1}$ as $\Lambda^t$-bimodule.

$$\Lambda^E \simeq \iota (\Lambda^{t-1}) = \Lambda^{t-1}+ D\Lambda^{t-1}$$
as $\Lambda^{t-1}$-bimodule with multiplication defined naturally (as the trivial extension).
So one gets
$$\iota (\Lambda^t) = \Lambda^* + \Lambda^t e_{(\tau i_0,m-l+1)} + D\Lambda^* + D\Lambda^{t}e_{(\tau i_0,m-l+1)}$$
as $\Lambda^*$-bimodule.
Note that $\Lambda^*  + D\Lambda^{t}e_{(\tau i_0,m-l+1)}$ is a subalgebra and
$$\arr{lll}{\Lambda^*  + D\Lambda^{t}e_{(\tau i_0,m-l+1)} &\simeq &\Lambda^*  + e_{(\tau i_0, m-l+1)} D\Lambda^{t}_{l}e_{(\tau i_0,m-l+1)} +M \\  &\simeq &(\Lambda^* + e_{( i_0,m)}\iota (\Lambda^{t})_0 e_{(i_0,m)}) \ltimes M\\ &\simeq&  \Lambda^{t-1},}$$
and $ \Lambda^t e_{(\tau i_0,m-l+1)} + D\Lambda^* $ is a $(\Lambda^* + e_{( i_0,m)}\iota (\Lambda^{t})_0 e_{(i_0,m)}) \ltimes M$-bimodule.
Identify the above isomorphic algebras, we get a $\Lambda^{t-1}$-bimodule isomorphism
$$ \Lambda^t e_{(\tau i_0,m-l+1)} + D\Lambda^* \simeq D \Lambda^{t-1}.$$
The multiplication of $\iota (\Lambda^{t})$ defines a trivial extension of
$(\Lambda^* + e_{( i_0,m)}\iota (\Lambda^{t})_l e_{(i_0,m)}) \ltimes M$ by its bimodule $ \Lambda^t e_{(\tau i_0,m-l+1)} + D\Lambda^* $.
Thus
$$\arr{lcl}{\iota (\Lambda^{t})& =&((\Lambda^* + e_{( i_0,m)}\iota (\Lambda^{t})_0 e_{(i_0,m)}) \ltimes M) \ltimes (\Lambda^t e_{(\tau i_0,m-l+1)} + D\Lambda^*) \\&\simeq& \Lambda^{t-1}\ltimes D\Lambda^{t-1} = \iota (\Lambda^{t-1})=\Lambda^E.}$$
This shows that our lemma holds for $\Lambda^t$.
\end{proof}

Denote by $\Lambda(i)$ the special truncate algebra defined by the bound quiver $(Q^N,i)$. As a corollary, we have the follow theorem.

\begin{thm}\label{tiso} $\iota (\Lambda(i))$ are isomorphic for all $i \in Q_0$. \end{thm}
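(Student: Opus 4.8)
The plan is to recognize Theorem \ref{tiso} as an immediate consequence of Theorem \ref{trexs}, once the algebras $\Lambda(i)$ are identified as $\tau$-slice algebras of the common stable bound quiver $Q$. First I would recall that $\Lambda(i) = k(Q^N,i)$ is, by definition, the special truncated algebra attached to the special truncated quiver $(Q^N,i)$, whose vertex set consists of exactly those $(j,n)$ in $(\overline{Q},i)$ with $0 \le n \le l-1$. By Proposition \ref{inialg}, every such $(Q^N,i)$ is an initial complete $\tau$-slice in a separated directed quiver of $Q$; in particular it is a complete $\tau$-slice, so $\Lambda(i)$ is a $\tau$-slice algebra of $Q$ in the sense of Section 6.

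With this identification in hand, the conclusion follows at once. Theorem \ref{trexs} asserts that the trivial extensions of all $\tau$-slice algebras of a finite stable bound quiver are mutually isomorphic. Applying this to the family $\{\Lambda(i)\}_{i \in Q_0}$, all of which arise from the same finite stable quiver $Q$, yields $\iota(\Lambda(i)) \simeq \iota(\Lambda(j))$ for all $i,j \in Q_0$, which is precisely the assertion of the theorem.

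There is essentially no obstacle remaining at this stage, since all the genuine difficulty has already been absorbed into Theorem \ref{trexs} and its supporting Lemma \ref{mmain}: the $\tau$-mutation analysis showing that a $\tau$-mutation of a complete $\tau$-slice leaves the trivial extension unchanged, together with Lemma \ref{refl} guaranteeing that any complete $\tau$-slice can be carried to an initial one by a finite sequence of $\tau$-mutations. The only point I would verify explicitly is that the various special truncated quivers $(Q^N,i)$ indeed sit inside (isomorphic) separated directed quivers of the \emph{same} $Q$, which is exactly the content of Propositions \ref{cntiso} and \ref{same}; this ensures that Theorem \ref{trexs} applies uniformly across all $i \in Q_0$, and hence that the statement is a clean corollary rather than a separate argument.
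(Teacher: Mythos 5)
Your proposal is correct and is essentially identical to the paper's own argument: the paper introduces $\Lambda(i)$ as the algebra of the initial complete $\tau$-slice $(Q^N,i)$ and then states Theorem~\ref{tiso} explicitly as a corollary of Theorem~\ref{trexs}. Your extra verification that the various $(Q^N,i)$ live in isomorphic components (Propositions~\ref{cntiso} and~\ref{same}) is exactly the right point to check, since the windows $0\le n\le l-1$ in different components are shifted and the $\Lambda(i)$ themselves need not be isomorphic, only their trivial extensions.
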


According to \cite{h1,cx}, we have an equivalence between the bounded derived categories $\mathcal D^b(\Lambda')$ of the $\tau$-slice algebra $\Lambda'$ and $\underline{\mathrm gr}\, \iota (\Lambda')$ of the stable category of the finite-dimensional graded modules over its trivial extension.

Let $\Lambda$ be a graded self-injective algebra. As a corollary of Theorem ~\ref{trexs} and Corollary 1.2 of \cite{cx},  we get:

\begin{cor}\label{eqidc} Let $\Lambda$ be a graded self-injective algebra.
Then all the $\tau$-slice algebras of $\Lambda$ have equivalent bounded derived categories, and they are equivalent to the stable category of graded $\Lambda$-modules.
\end{cor}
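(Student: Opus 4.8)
The plan is to assemble the corollary from results already in hand, treating the $\tau$-slice algebras uniformly through their common trivial extension. First I would invoke Theorem~\ref{trexs}: for a fixed finite stable bound quiver $Q$ arising from $\Lambda$, all $\tau$-slice algebras have isomorphic trivial extensions. Fix one such $\tau$-slice algebra $\Lambda'$ with trivial extension $\iota(\Lambda')$. Since $\Lambda'$ has finite global dimension by Proposition~\ref{fgd}, and its degree-zero part $\Lambda'_0$ is semisimple, the hypotheses of the Chen machinery apply. The strategy is then to run each $\tau$-slice algebra through the chain of equivalences supplied by Theorem~\ref{bgg} and by \cite{h1,cx}, and observe that the common endpoint is the stable category $\underline{\mathrm{gr\,}}\Lambda$, which does not depend on the choice of slice.

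The key steps, in order, are as follows. For a single $\tau$-slice algebra $\Lambda'$, the result of \cite{h1,cx} gives a triangle equivalence $\mathcal D^b(\Lambda') \simeq \underline{\mathrm{gr\,}}\iota(\Lambda')$ between the bounded derived category of $\Lambda'$ and the stable category of finitely generated graded modules over its trivial extension. Next, by Theorem~\ref{trexs} the trivial extension $\iota(\Lambda')$ is, up to isomorphism, independent of which $\tau$-slice algebra we started from; hence $\underline{\mathrm{gr\,}}\iota(\Lambda')$ is a single triangulated category shared by all the slices. Finally, Theorem~\ref{bgg} (or Corollary~1.2 of \cite{cx} directly) identifies this common stable category with $\underline{\mathrm{gr\,}}\Lambda$, the stable category of graded $\Lambda$-modules. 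Chaining these identifications yields, for any two $\tau$-slice algebras $\Lambda'$ and $\Lambda''$,
$$\mathcal D^b(\Lambda') \simeq \underline{\mathrm{gr\,}}\iota(\Lambda') \simeq \underline{\mathrm{gr\,}}\iota(\Lambda'') \simeq \mathcal D^b(\Lambda''),$$
with the middle equivalence induced by the algebra isomorphism of Theorem~\ref{trexs}, and all of them equivalent to $\underline{\mathrm{gr\,}}\Lambda$.

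The main thing to verify carefully is that the hypotheses needed to apply \cite{cx} hold uniformly for every $\tau$-slice algebra, not merely for the initial one $\Lambda^N$. Concretely, one must check that each $\tau$-slice algebra is graded with semisimple degree-zero part and that its trivial extension is well graded in the sense of \cite{cx}, so that the equivalence $\mathcal D^b(\Lambda') \simeq \underline{\mathrm{gr\,}}\iota(\Lambda')$ is actually available; the well-gradedness follows from the Loewy-length control established in the proof of Lemma~\ref{twist} applied to the relevant component of $\Lambda\#k\mathbb Z^*$, together with Theorem~\ref{trexs}. I expect this compatibility check to be the only real obstacle, since the isomorphism of trivial extensions from Theorem~\ref{trexs} is an isomorphism of algebras (not merely of graded algebras), so one should confirm that it respects enough of the grading to transport the stable-category equivalence; granting that, the corollary is immediate from the cited results.
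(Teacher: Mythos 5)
Your proposal is correct and follows essentially the same route as the paper: the paper deduces the corollary directly from Theorem~\ref{trexs} (isomorphism of the trivial extensions of all $\tau$-slice algebras) combined with the equivalence $\mathcal D^b(\Lambda') \simeq \underline{\mathrm{gr\,}}\,\iota(\Lambda')$ from \cite{h1,cx} and Corollary~1.2 of \cite{cx} identifying this with $\underline{\mathrm{gr\,}}\Lambda$. Your additional check that the hypotheses of \cite{cx} hold for every $\tau$-slice algebra (semisimple degree-zero part, well-gradedness of the trivial extension via Lemma~\ref{twist}) is sound due diligence that the paper leaves implicit, but it does not change the argument.
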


Let $Q^S$ be a complete slice in $(\overline{Q}, i)$ and let $I^S[r] = \{(\tau^r i,n-lr) |(i,n) \in Q^S_0 \}$ for $r \in \mathbb Z$.
Let $e_{I^S[s]} = \sum_{(i,n)  \in I[s] } e_{(i,n)} $, and let $$M^S[s,t] = e_{I^S[s]}  \Lambda \# k \mathbb Z^* e_{I^T[t]} = \sum_{(i,n)  \in I^S[s]}\sum_{(j,m)  \in I^S[t] } e_{(i,n)} \Lambda \# k \mathbb Z^*  e_{(j,m)}$$
for all $s,t \in \mathbb Z$.
Let $\Gamma^S[s] = M^S[s,s]$.
Then $\Gamma^S[s]$ are isomorphic to the complete $\tau$-slice algebra $\Lambda^S = k(Q^S)$.
$M^S[s+1,s]$ are $\Gamma^S[s+1]$-$\Gamma^S[s]$-bimodules and $M^S[t,s]$ =0 for $t \neq s, s+1$, and $ M[s,t] M[s,t] = 0$ when $s \neq t$.

Similar to the proof of Theorem ~\ref{repetitive} and Proposition ~\ref{directsum}, we see that
$\left(\arr{ccccccc}{\ddots \\ & \Gamma^S[s+1] & M^S[s+1,s] & \\ & &\Gamma^S[s] & M^S[s,s-1]  \\ & & &\Gamma^S[s-1] &\\& & &&\ddots}\right) $ is the direct summand of $\Lambda \# k \mathbb Z^*$ corresponding to the component $(\overline{Q}, i)$.
Clearly it is isomorphic to the repetitive algebra $\widehat{\Lambda^S}$.
So we have the following theorem.

\begin{thm}\label{repetitiveslice}
The repetitive algebra of a $\tau$-slice algebra is a direct summand of $\Lambda \# k \mathbb Z^*$.

All the  $\tau$-slice algebras have the isomorphic repetitive algebras.
\end{thm}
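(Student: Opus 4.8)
The plan is to realize the direct summand of $\Lambda \# k\mathbb Z^*$ attached to the component $(\overline{Q},i)$ as the repetitive algebra $\widehat{\Lambda^S}$ of the $\tau$-slice algebra $\Lambda^S = k(Q^S)$, exactly paralleling the proof of Theorem \ref{repetitive}. First I would check that the shifted copies $I^S[s] = \overline{\tau}^{\,s}(Q^S_0)$, $s \in \mathbb Z$, partition the vertex set of $(\overline{Q},i)$; this is immediate from the defining property of a $\tau$-slice, since each $\tau$-orbit of $(\overline{Q},i)$ meets $Q^S$ in exactly one vertex, and $\overline{\tau}$ strictly lowers the second index (so its orbits are infinite and pairwise disjoint). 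Writing $e_{I^S[s]}$ for the corresponding idempotent, this partition yields the block decomposition $\bigoplus_{s,t} M^S[s,t]$ of the summand, and it remains to pin down the blocks.

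Three points then have to be verified, mirroring Lemmas \ref{Mst} and \ref{Mbim}: (i) each diagonal block $\Gamma^S[s] = M^S[s,s]$ is isomorphic to $\Lambda^S$, which follows because the automorphism $\overline{\tau}$ of $\overline{Q}$ carries $I^S[s]$ to $I^S[s+1]$ and the induced shift isomorphisms identify every $\Gamma^S[s]$ with $\Gamma^S[0] = k(Q^S) = \Lambda^S$; (ii) $M^S[s,t] = 0$ unless $t = s$ or $t = s+1$, and $M^S[s+1,s] \simeq D\Gamma^S[s] \simeq D\Lambda^S$ as a $\Gamma^S[s+1]$-$\Gamma^S[s]$-bimodule, the latter via the nondegenerate pairing of Lemma \ref{bil} just as in Lemma \ref{Mbim}; and (iii) the off-diagonal part $\bigoplus_s M^S[s+1,s]$ squares to zero, as in the last assertion of Lemma \ref{Mst}. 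I expect (ii) to be the main obstacle. In Theorem \ref{repetitive} the bands $I[s]$ were intervals of width exactly $l$, so a bound path of length at most $l$ could cross at most one band boundary; for a general complete $\tau$-slice the sets $I^S[s]$ are no longer intervals, and the confinement of arrows, hence of all bound paths, to adjacent blocks must instead be extracted from the completeness conditions in the definition of a complete $\tau$-slice. Precisely condition (3) forces the target of an arrow leaving $I^S[s]$ to lie in a single neighbouring translate, and the $\tau$-initial and $\tau$-terminal conditions on sources and sinks close off the remaining cases; once arrows are confined to two consecutive blocks the same follows for bound paths, giving both the vanishing of the far blocks and the bimodule identification through the duality.

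With (i)--(iii) in hand, Lemma \ref{nearrep} shows that the summand is the bi-infinite matrix algebra with $\Lambda^S$ on the diagonal and $D\Lambda^S$ on the subdiagonal and with the off-diagonal part squaring to zero, which is by definition $\widehat{\Lambda^S}$; this proves the first assertion. For the second assertion I would observe that, for \emph{any} complete $\tau$-slice in $(\overline{Q},i)$, the construction recovers the whole component summand, so $\widehat{\Lambda^S}$ is isomorphic to that summand independently of the chosen slice. Invoking Proposition \ref{cntiso} (together with Proposition \ref{directsum}), all connected components of $\overline{Q}$ are isomorphic, and each is the repetitive algebra of the $\tau$-slice algebra cut out of it; hence any two $\tau$-slice algebras have isomorphic repetitive algebras.
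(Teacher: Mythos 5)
Your proposal reproduces the paper's own proof essentially verbatim: the paper likewise introduces the translated blocks $I^S[s]$, asserts the analogues of Lemmas~\ref{Mst} and~\ref{Mbim} (each $\Gamma^S[s]\simeq \Lambda^S$, vanishing of the far blocks, $M^S[s+1,s]\simeq D\Lambda^S$, off-diagonal part squaring to zero), and concludes via the matrix picture of Lemma~\ref{nearrep} together with Propositions~\ref{cntiso} and~\ref{directsum} that each component summand of $\Lambda\# k\mathbb Z^*$ is $\widehat{\Lambda^S}$, whence all $\tau$-slice algebras share this repetitive algebra. One small caution on your point (ii): arrow-level confinement alone does not formally yield path confinement (block indices could a priori climb once per arrow), and the clean fix is to extend a bound path from $u$ to a maximal one, which must end at $\overline{\tau}^{-1}u$ exactly one block over, sandwiching the monotone block index --- but the paper itself is no more explicit, disposing of these verifications with ``similar to the proof of Theorem~\ref{repetitive} and Proposition~\ref{directsum}.''
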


Recall that a subcategory $\mathcal T$ of a triangulated category $\mathcal C$ is called a {\em tilting subcategory} if it generates $\mathcal C$ and we have $\mathrm{Hom}({\mathcal T,\mathcal T[i]}) = 0$ for all $i \neq 0$.
According to \cite{ric}, two algebra $\Lambda'$ and $\Lambda''$ of finite global dimension are derived equivalent if and only if  there is a tilting subcategory $\mathcal T = \mathrm{add} T$ of $\mathcal D \Lambda'$ for some objects $T$ and $\Lambda'' = \mathrm{End} T$.
Now let $\sigma_1,\cdots,\sigma_r$ be a sequence of $\tau$-mutations.
If $\Lambda'$ is a $\tau$-slice algebra, then $\sigma_r \cdots \sigma_1 \Lambda' $ is also a $\tau$-slice algebra and we have that $\iota (\Lambda') \simeq \iota (\sigma_r \cdots \sigma_1 \Lambda')$.
Thus $\mathcal D^b(\Lambda')$ and $\mathcal D^b( \sigma_r \cdots \sigma_1 \Lambda')$ are equivalent as triangulated category.
Hence by \cite{ric} there is a tilting object $T$ in $\underline{\mathrm gr}\, \iota (\Lambda')$ such that $\sigma_r \cdots \sigma_1 \Lambda' \simeq \mathrm{End}\, T$.
So we get:

\begin{cor}\label{tilt}
Let $\Lambda'$ and $\Lambda''$ be $\tau$-slice algebras of a graded self-injective algebra $\Lambda$, then there is a tilting object $T$ in $\underline{\mathrm gr}\, \Lambda$, such that $\Lambda' \simeq \mathrm{End}\, T$.
\end{cor}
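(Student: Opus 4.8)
The plan is to assemble the statement from the three ingredients already in place: the transitivity of $\tau$-mutations (Lemma~\ref{refl}), the invariance of trivial extensions under $\tau$-mutation (Theorem~\ref{trexs}), and Rickard's theorem together with the equivalence $\mathcal D^b(\Lambda') \simeq \underline{\mathrm{gr}}\,\iota(\Lambda')$ coming from \cite{h1, cx}. The chain of reasoning laid out in the paragraph preceding the corollary essentially does this for two $\tau$-slice algebras $\Lambda'$ and $\Lambda''=\sigma_r\cdots\sigma_1\Lambda'$ that are connected by an explicit sequence of $\tau$-mutations; the corollary is the clean packaging of that chain, with the target stable category identified as $\underline{\mathrm{gr}}\,\Lambda$ via Corollary~\ref{eqidc}.

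First I would invoke Lemma~\ref{refl} to reduce to the case where $\Lambda'$ and $\Lambda''$ are joined by $\tau$-mutations: given any two complete $\tau$-slices $Q^{S}$ and $Q^{S'}$ of $Q$, Lemma~\ref{refl} produces mutation sequences carrying each to an initial complete $\tau$-slice $(Q^N,i_0)$ up to shift. Composing one sequence with the inverse of the other (using that $s_i^+$ and $s_i^-$ are mutually inverse at a source/sink, as noted before Theorem~\ref{trexs}) yields a single sequence $\sigma_1,\ldots,\sigma_r$ of $\tau$-mutations with $\Lambda'' \simeq \sigma_r\cdots\sigma_1\Lambda'$. Next I would apply Theorem~\ref{trexs} to conclude $\iota(\Lambda')\simeq\iota(\Lambda'')$, so that by \cite{h1,cx} the derived categories $\mathcal D^b(\Lambda')$ and $\mathcal D^b(\Lambda'')$ are both triangle-equivalent to $\underline{\mathrm{gr}}\,\iota(\Lambda')$, hence equivalent to each other.

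Finally I would feed this derived equivalence into Rickard's theorem \cite{ric}: since $\Lambda'$ and $\Lambda''$ have finite global dimension (Proposition~\ref{fgd}) and are derived equivalent, there is a tilting subcategory $\mathcal T=\mathrm{add}\,T$ in $\mathcal D^b(\Lambda')$ with $\Lambda''\simeq\mathrm{End}\,T$. Transporting $T$ across the equivalence $\mathcal D^b(\Lambda')\simeq\underline{\mathrm{gr}}\,\iota(\Lambda')\simeq\underline{\mathrm{gr}}\,\Lambda$ furnished by Corollary~\ref{eqidc} (the last equivalence being Theorem~\ref{bgg}(3)) places the tilting object inside $\underline{\mathrm{gr}}\,\Lambda$, giving $\Lambda''\simeq\mathrm{End}\,T$ with $T\in\underline{\mathrm{gr}}\,\Lambda$, which is exactly the assertion (after relabeling $\Lambda''$ as the target $\tau$-slice algebra).

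The main obstacle I expect is bookkeeping rather than substance: one must check that the tilting object is genuinely carried into $\underline{\mathrm{gr}}\,\Lambda$ as a \emph{graded} object and that $\mathrm{End}\,T$ is computed in the correct category, so that the isomorphism $\Lambda''\simeq\mathrm{End}\,T$ survives transport along the composite equivalence. This amounts to verifying that the equivalences of Theorem~\ref{bgg} and Corollary~\ref{eqidc} are compatible with the tilting formalism of \cite{ric}, i.e.\ that they send tilting subcategories to tilting subcategories and preserve endomorphism algebras; since these are all triangulated equivalences this is formal, but it is the only point requiring care.
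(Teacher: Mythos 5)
Your proposal is correct and takes essentially the same route as the paper: the paragraph preceding the corollary argues exactly as you do, combining Theorem~\ref{trexs} (isomorphic trivial extensions, itself resting on the mutation transitivity of Lemma~\ref{refl}) with the equivalences $\mathcal D^b(\Lambda')\simeq \underline{\mathrm{gr}}\,\iota(\Lambda')$ from \cite{h1,cx} and Rickard's theorem \cite{ric} to produce the tilting object, then places it in $\underline{\mathrm{gr}}\,\Lambda$ via Corollary~\ref{eqidc}. Your closing remark about checking that the triangle equivalences carry tilting subcategories to tilting subcategories and preserve endomorphism algebras is the only point you make explicit that the paper leaves implicit, and as you note it is formal.
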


\section{Koszul Duality and $\tau$-mutations}

It is natural to investigate the relationship between our $\tau$-mutation and the BGP reflection.
They are not related directly, as is shown in the following example.
Consider the case $l=2$.
Let $\Lambda$ be a self-injective algebra with vanishing radical cube whose quiver $Q$ is the double quiver of $A_5$.
It follows from \cite{g2} that this quiver is a stable bound quiver of Loewy length $3$ with trivial Nakayama translation.
Its separated directed quiver $\overline{Q}$ is two copies of $\mathbb Z A_5$, with the total special truncation $Q^N = (Q^N,1)\cup  (Q^N,2)$, where
$$\arr{cc}{(Q^N,1): & (1,0)\rightarrow (2,1) \leftarrow
(3,0 )\rightarrow (4,1) \leftarrow (5,0 )\\
(Q^N,2): & (1,1) \leftarrow (2,0)\rightarrow
(3,1 ) \leftarrow (4,0)\rightarrow  (5,1 ).
}$$
Let $\Lambda^S$ be the $\tau$-slice algebra of $Q^S = (Q^N,1)$.
Then its $\tau$-mutation $s^-_{(2,1)}\Lambda^S$ is given by the bound quiver
$$s^-_{(2,1)} Q^S : (1,0) \leftarrow (2,1)\rightarrow
(3,0 )\rightarrow (4,1) \leftarrow (5,0 )$$ with a single relation $(2,1)\rightarrow (3,0 ) \rightarrow (4,1)$.
If we do BGP reflection for the quiver $Q^S = (Q^N,1)$ at vertex $(2,1)$, one gets the same quiver without relation.

Note that the above algebras $\Lambda^S$ and $s^-_{(2,1)} \Lambda^S$ are both Koszul, and we can look at their Koszul dual, which are just the path algebras of $Q^S$ and $s^-_{(2,1)}Q^s$, respectively.
So we see that our $\tau$-mutation coincides with  the BGP reflection when applied to the Koszul dual of the $\tau$-slice algebras.
This is true in general.

Assume that $\Lambda$ is a  self-injective algebra with vanishing radical cube whose bound quiver is $Q$.
In this case $l=2$.
Let $Q^S$ be a complete $\tau$-slice, let $\Lambda^S$ be the $\tau$-slice algebra with bound quiver $Q^S$.
$\Lambda^S$ is an algebra  with vanishing radical square, so are its $\tau$-mutations.
$Q^S$ is a directed quiver, so its orientation is admissible.
Since $\Lambda\# k \mathbb Z^*$ is a self-injective algebra with vanishing radical cube, its bound quiver $\overline{Q}$ is a stable translation quiver \cite{g2} with  $\overline{\tau}$ as the translation.
Now let $(i,m)$ be a sink in $Q^S$, then  $\overline{\tau}(i,m) = (\tau i, m-2 )$ is not a vertex of $Q^S$.
We have that $H_{(i,m)}$ forms a mesh in $\overline{Q}$,
$$
 \arr{ccccc}{&&(j_1,m-1)&&\\ &\stackrel{\alpha_1}{\nearrow}& &\stackrel{\beta_1}{\searrow}\\ (\tau i, m-2 )&&\vdots& &( i, m )\\ &\stackrel{\alpha_r}{\searrow}& &\stackrel{\beta_r}{\nearrow}\\&&(j_r,m-1) &&}
$$
with $\beta_1, \ldots, \beta_r $ are arrows in $Q^S$ and  $\alpha_1, \ldots, \alpha_r $ not in $Q^S$.
The $\tau$-mutation $s^{-}_{(i,m)}(Q^S)$ is obtained from $Q^S$ by replacing $(i,m)$ with $(\tau i, m-2 )$ and each $\beta_t $ with $\alpha_t$.
The BGP reflection  acts on a quiver $Q$ at its sink  by just reverse all arrows to  this vertex.
When we identify the vertex $(i,m)$ with $(\tau i, m-2 )$, this is exactly the BGP reflection of quiver $Q^S$ at the sink $(i,m)$.
Same argument also works when $(i,m)$ is a source.

Since algebra with radical squared zero is Koszul whose Yoneda algebra is hereditary when its quiver dos not contain oriented cycle.
Thus their Yoneda algebras $E(\Lambda^S)$ and $E(s^{-}_{(i,m)}\Lambda^S)$ are hereditary algebras, with the quiver $Q^S$  and $s^{-}_{(i,m)}(Q^S)$, respectively, without relation.

Use the same notations $s^{-}_{(i,m)}$ and respectively $s^{+}_{(i',m')}$ for the BGP reflections on a path algebra at a sink $(i,m)$ and respectively at a source $(i',m')$ of the quiver.
We gave the following observation.

\begin{prop}\label{here}
Assume that $\Lambda$ is a  self-injective algebra with vanishing radical cube and $\Lambda^S$ be a $\tau$-slice algebra of $\Lambda$. Then the Yoneda algebra of a  $\tau$-mutation of $\Lambda^S$ at a vertex $(i,m)$ is the BGP reflection of the Yoneda algebra of $\Lambda^S$ at the same vertex. That is
$$ E(s^{\pm} (i,m) \Lambda^S) =s^{\pm} (i,m) E( \Lambda^S).
$$
\end{prop}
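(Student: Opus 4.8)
The plan is to assemble the facts laid out in the discussion immediately preceding the statement, since they already contain the geometric heart of the argument; what remains is to organise them into an identity of algebras. First I would record the structural consequences of $l=2$: a $\tau$-slice algebra $\Lambda^S$ has maximal bound paths of length $l-1=1$, hence vanishing radical square, and its quiver $Q^S$ sits inside the acyclic $\overline{Q}$, so it is directed. A radical-square-zero algebra is Koszul, and because every path of length $2$ is a relation its quadratic dual carries no relations; thus $E(\Lambda^S)$ is the hereditary path algebra of $Q^S$. The same applies verbatim to $s^{\pm}_{(i,m)}\Lambda^S$, whose quiver $s^{\pm}_{(i,m)}(Q^S)$ is again acyclic and whose radical square again vanishes, so $E(s^{\pm}_{(i,m)}\Lambda^S)$ is the path algebra of $s^{\pm}_{(i,m)}(Q^S)$, with no relations.

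The second step is the quiver-level comparison, which is exactly the mesh computation already displayed above. For a sink $(i,m)$ of $Q^S$ the hammock $H_{(i,m)}$ forms a mesh in $\overline{Q}$ whose incoming arrows $\beta_1,\dots,\beta_r$ (lying in $Q^S$) are paired with the arrows $\alpha_1,\dots,\alpha_r$ out of $(\tau i, m-2)$ (lying outside $Q^S$). The $\tau$-mutation $s^-_{(i,m)}$ deletes $(i,m)$ together with the $\beta_t$ and inserts $(\tau i, m-2)$ together with the $\alpha_t$; after identifying $(i,m)$ with its translate $(\tau i, m-2)$ this is precisely the reversal of all arrows at the vertex, that is, the BGP reflection of $Q^S$ at the sink $(i,m)$. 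I would stress that this matching is an honest bijection $\beta_t \leftrightarrow \alpha_t$, so arrow multiplicities are preserved and the reflected vertex is genuinely a source afterwards, which is what makes $s^-$ well defined on the hereditary side.

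Combining the two steps finishes the proof. Applying $E$ turns $\Lambda^S$ and $s^-_{(i,m)}\Lambda^S$ into the path algebras of $Q^S$ and of its BGP reflection, and by definition the BGP reflection $s^-_{(i,m)}$ of the hereditary algebra $E(\Lambda^S)$ is the path algebra of that same reflected quiver; hence $E(s^-_{(i,m)}\Lambda^S) = s^-_{(i,m)} E(\Lambda^S)$. The source case $s^+$ is entirely dual, replacing sinks by sources and the mesh $H_{(i,m)}$ by $H^{(i,m)}$, and the mutation $(i,m)\mapsto(\tau^{-1}i,m+l)$.

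I expect the only real obstacle to be bookkeeping of orientation conventions. The Koszul dual of a radical-square-zero algebra is most naturally described via the opposite quiver, so I must ensure that a sink of $Q^S$ is the vertex at which $s^-$ (rather than $s^+$) is applied to $E(\Lambda^S)$, and that whatever opposite-quiver convention is adopted is applied identically on both sides so that it cancels in the final identity. Once this is pinned down, the equality is forced by the bijection $\beta_t\leftrightarrow\alpha_t$ and requires no further computation.
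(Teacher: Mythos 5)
Your proposal is correct and follows essentially the same route as the paper: the mesh computation in $\overline{Q}$ matching the arrows $\beta_t$ into the sink $(i,m)$ with the arrows $\alpha_t$ out of $(\tau i,m-2)$, so that after identifying the vertex with its translate the $\tau$-mutation is exactly the BGP reflection of $Q^S$, combined with the observation that a radical-square-zero algebra with acyclic quiver is Koszul with hereditary Yoneda algebra given by that quiver without relations. Your explicit attention to the opposite-quiver convention in the quadratic dual is a detail the paper passes over silently, but it does not alter the argument.
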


We remark that in the case of Dynkin quivers, the graded self-injective algebras we starting with are not Koszul, but all the $\tau$-slice algebras are Koszul since they have vanishing radical square.

\medskip

Now we consider the case of starting with a Koszul self-injective algebra.
Let $Q$ be the bound quiver of a graded self-injective algebra and $\overline{Q}$ be its separated direct quiver.
Now consider the orbit algebra $\Lambda^{T,r+1} = O((\mathcal N^{r+1}), P)$ where $P$ is a basic $(\mathcal N^{r+1})$-orbit generator of $\mathcal P(\Lambda\# k\mathbb Z^*)$.
It follows from Proposition ~\ref{gcv1} that $\Lambda^{T,(r+1)}$ is a finite regular covering of $\Lambda^T$. It is proved in \cite{zh} that a finite regular covering of a Koszul self-injective algebra is also Koszul.
So we have the following proposition.

\begin{prop}\label{gcv2}
If $\Lambda^T$ is a Koszul algebra, so is $\Lambda^{T,(r+1)}$.

\end{prop}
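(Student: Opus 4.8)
The plan is to recognize this proposition as the assembly of two facts already in hand: the covering statement of Proposition~\ref{gcv1}, and the theorem of \cite{zh} that a finite regular covering of a Koszul self-injective algebra is again Koszul. First I would apply Proposition~\ref{gcv1} with $r$ replaced by $r+1$, obtaining that $\Lambda^{T,(r+1)}$ is a regular (Galois) covering of $\Lambda^T$ with covering group $\mathbb Z/(r+1)\mathbb Z$. Since $r+1$ is finite this is a \emph{finite} regular covering, and since $\Lambda^T$ has a finite bound quiver, $\Lambda^{T,(r+1)}$ is again finite-dimensional; thus we are squarely in the setting in which \cite{zh} applies.

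Next I would verify the standing hypotheses of the cited theorem. By Theorem~\ref{covA}, $\Lambda^T=O(G,P)$ is a graded self-injective (indeed weakly symmetric) algebra, so the base of the covering is self-injective, and by assumption it is also Koszul. The remaining hypothesis is precisely that the covering is finite and regular, which was arranged in the first step. Invoking \cite{zh} then yields at once that the covering algebra $\Lambda^{T,(r+1)}$ is Koszul. If one prefers an intrinsic argument in place of the black box of \cite{zh}, the idea would be to lift minimal graded projective resolutions along the covering functor $\mathcal C\to\mathcal D$ attached to $\Lambda^{T,(r+1)}\to\Lambda^T$: this functor is degree-preserving and realizes the $\mathrm{Hom}$-space isomorphisms of the covering definition, so the associated push-down and pull-up functors carry a graded projective resolution generated in degree $i$ at homological step $i$ to another such resolution. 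Because Koszulity of a graded algebra with semisimple degree-zero part is exactly the condition that each simple admit such a linear resolution (equivalently $\mathrm{Ext}^i(S,T)_j=0$ for $i\neq j$), this linearity transfers between $\Lambda^T$ and $\Lambda^{T,(r+1)}$ in both directions.

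The only genuine obstacle is to apply the theorem of \cite{zh} in the correct direction and under the correct hypotheses: one needs the \emph{base} $\Lambda^T$, not the covering algebra, to be the Koszul self-injective algebra, which is why Theorem~\ref{covA} must be invoked first to certify self-injectivity of $\Lambda^T$. Once the direction of the covering and the self-injectivity of the base are pinned down, the conclusion is immediate.
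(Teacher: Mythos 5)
Your proof is correct and follows essentially the same route as the paper: it cites Proposition~\ref{gcv1} to exhibit $\Lambda^{T,(r+1)}$ as a finite regular covering of $\Lambda^T$ (whose self-injectivity comes from Theorem~\ref{covA}), and then invokes the theorem of \cite{zh} that finite regular coverings of Koszul self-injective algebras are Koszul. Your additional sketch of transferring linear resolutions along the covering is a reasonable gloss on what \cite{zh} proves, but it is not needed and does not change the argument.
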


Assume that  $\Lambda^T$ is Koszul, we are going to prove that all the  $\tau$-slice algebras  are also Koszul. We need some preparation.

A subset $U$ of $\overline{Q}_0$ is called {\em convex} provided that  for any $(i,m), (j,n) \in U$, if there is a bound path from $(i,m)$ to $(j,n)$ such that all its vertices are in U, then for any bound path from $(i,m)$ to $(j,n)$, its vertices are all in $U$.
A full subquiver of $\overline{Q}$ is called {\em convex} if its vertex set is convex.

\begin{lemma}\label{convsub}
Let $Q'$ be a bound subquiver of a bound quiver $Q$, if $Q'$ is convex, then $k(Q')$ is a subalgebra of $k(Q)$.
\end{lemma}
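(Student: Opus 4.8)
The plan is to realize $k(Q')$ as the image of $kQ'$ inside $k(Q)$ and to use convexity only at the very end, to control which relations survive. Since $Q'$ is a \emph{full} bound subquiver of $Q$, every arrow of $Q'$ is an arrow of $Q$, so there is a canonical inclusion of path algebras $\iota\colon kQ' \hookrightarrow kQ$, which is an algebra homomorphism. Composing with the quotient $kQ \to k(Q)=kQ/(\rho)$ gives an algebra homomorphism $\phi\colon kQ' \to k(Q)$. The induced relation set $\rho'$ of $Q'$ consists of the $Q'$-parts of relations of $\rho$, so $\rho'\subseteq(\rho)$ and $\phi$ annihilates the ideal $(\rho')\subseteq kQ'$; hence $\phi$ factors through an algebra homomorphism $\bar\phi\colon k(Q')\to k(Q)$. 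The whole content of the lemma is that $\bar\phi$ is injective, for then $k(Q')\cong\mathrm{im}\,\bar\phi$ is a subalgebra of $k(Q)$.

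To prove injectivity I would introduce the $k$-linear retraction $\pi\colon kQ\to kQ'$ defined on the path basis by $\pi(p)=p$ when $p$ is a path all of whose vertices lie in $Q'_0$ (equivalently, a path of $Q'$, since $Q'$ is full) and $\pi(p)=0$ otherwise, so that $\pi\circ\iota=\mathrm{id}_{kQ'}$. The kernel of $\bar\phi$ is $\big((\rho)\cap kQ'\big)/(\rho')$, so it suffices to show $(\rho)\cap kQ'\subseteq(\rho')$; and since $\pi(x)=x$ for $x\in kQ'$, it is enough to establish the containment $\pi\big((\rho)\big)\subseteq(\rho')$. Before doing so I would replace $\rho$ by an equivalent generating set in which each generator is either a single non-bound path or a linear combination of at least two bound paths sharing a source $a$ and a target $b$; this is harmless because a non-bound constituent of a relation already lies in $(\rho)$ and may be split off as a monomial generator. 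As $(\rho)$ is then spanned by elements $q\,\zeta\,q'$ with $q,q'$ paths in $Q$ and $\zeta=\sum_t a_t p_t$ such a generator, and $\pi$ is linear, it suffices to treat one such $q\,\zeta\,q'$.

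Here convexity enters. If $q$ or $q'$ has a vertex outside $Q'_0$, or if $a$ or $b$ lies outside $Q'_0$, then no path $q\,p_t\,q'$ lies in $Q'$ and $\pi(q\,\zeta\,q')=0\in(\rho')$. Otherwise $q,q'$ lie in $Q'$ and $a,b\in Q'_0$, so $\pi(q\,\zeta\,q')=q\,\zeta'\,q'$ where $\zeta'=\sum_{p_t\in Q'}a_t p_t$ is the part of $\zeta$ supported on $Q'$. I would then show $\zeta'\in\{0,\zeta\}$: for a monomial relation this is immediate, while for a relation all of whose constituent paths $p_t$ are bound, the presence of a single $p_t\in Q'$ exhibits a bound path from $a$ to $b$ with all vertices in $Q'_0$, whereupon the convexity hypothesis forces \emph{every} bound path from $a$ to $b$, and in particular every $p_t$, to lie in $Q'$. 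Thus $\zeta'$ is $0$ or is $\zeta\in\rho'$, giving $\pi(q\,\zeta\,q')\in(\rho')$ and hence $\pi\big((\rho)\big)\subseteq(\rho')$.

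The main obstacle I anticipate is precisely this last step: the convexity condition is phrased only in terms of \emph{bound} paths, so the argument depends on first normalizing the relations so that every non-monomial generator is a combination of bound paths, and on pinning down the induced relation set $\rho'$ so that "the $Q'$-part of a relation of $Q$" is genuinely a relation (or the zero element) of $Q'$. Once this bookkeeping guarantees that each generator of $(\rho)$ is carried by $\pi$ either to $0$ or to a generator of $(\rho')$, the surrounding homomorphism-and-retraction formalism is routine.
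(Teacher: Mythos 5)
Your proof is correct and takes essentially the same route as the paper: the paper's three-line argument embeds $kQ'$ into $kQ$ and observes that, by convexity, each relation of $Q$ is either a relation of $Q'$ or has no terms supported on $Q'$ --- exactly the dichotomy you establish after normalizing the generators into bound constituents. Your retraction $\pi$ and the verification $\pi\big((\rho)\big)\subseteq(\rho')$ simply make explicit the injectivity bookkeeping that the paper leaves implicit.
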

\begin{proof}
Clearly, we have an embedding of path algebras $i: kQ' \to kQ$.
Since $Q'$ is convex, a relation of $Q$ is either a relation of $Q'$ or its terms contain no paths in $Q'$.
Thus $i$ induces an embedding from $k(Q')$ into $k(Q)$, and our assertion holds.
\end{proof}

An initial complete $\tau$-slice $(Q^N,i)$ is convex.
Apply  Lemma ~\ref{refl}, using induction on the number of $\tau$-mutations needed to reach a $\tau$-slice $Q^S$ from an initial complete $\tau$-slice $(Q^N,i)$, one easily obtains the following proposition.

\begin{prop}\label{convex}
A complete $\tau$-slice is convex.
\end{prop}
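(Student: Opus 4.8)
The plan is to prove the statement by induction on the number of $\tau$-mutations separating a given complete $\tau$-slice from an initial one, exactly as the preceding remark suggests. By Lemma~\ref{refl}, every complete $\tau$-slice $Q^S$ can be carried to an initial complete $\tau$-slice $(Q^N,i_0)$ by a finite sequence $\sigma_r\cdots\sigma_1$ of $\tau$-mutations; since each $\tau$-mutation is invertible with inverse again a $\tau$-mutation ($s_i^+s_i^-=\mathrm{id}$ when $(i,m)$ is a sink, and dually for a source), $Q^S$ is obtained from $(Q^N,i_0)$ by applying $r$ $\tau$-mutations. Thus it suffices to establish the base case, that an initial complete $\tau$-slice is convex, together with the inductive step, that a single $\tau$-mutation of a convex complete $\tau$-slice is again convex.

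For the base case I would argue directly from the level grading. The vertices of $(Q^N,i_0)$ are the $(j,n)$ with $0\le n\le l-1$, and every arrow of $\overline{Q}$ raises the second index by exactly one. Hence any bound path joining two vertices $(a,p),(b,q)$ of the slice has all of its intermediate vertices at second indices lying strictly between $p$ and $q$, so in $[0,l-1]$, and these vertices lie in the same connected component. Therefore every bound path between two vertices of $(Q^N,i_0)$ stays inside the slice, and the convexity condition holds in the strong form that no bound path between slice vertices ever leaves the slice.

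For the inductive step, let $U$ be the (convex) vertex set of a complete $\tau$-slice $Q'$ and consider the $\tau$-mutation $s_i^-$ at a sink $(i,m)$; the source case $s_j^+$ is dual. Then the mutated vertex set is $U'=(U\setminus\{(i,m)\})\cup\{(\tau i,m-l)\}$, where, as in the proof of Lemma~\ref{mmain}, $(i,m)$ is a sink of $Q'$ and $(\tau i,m-l)$ is a source of $s_i^-(Q')$. The point is to control how bound paths between surviving vertices interact with the single swapped vertex. Here I would invoke the hammock identity $H^{\tau i}=H_i$: the vertices reachable from the new source $(\tau i,m-l)$ are, up to $\tau$, exactly the vertices from which the old sink $(i,m)$ was reachable, so the incoming configuration at $(i,m)$ corresponds under $\tau$ to the outgoing configuration at $(\tau i,m-l)$. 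Combining this with Corollary~\ref{arrows} to match arrow multiplicities, I would show that a pair of vertices of $U'$ joined by a bound path inside $U'$ cannot also be joined by a parallel bound path leaving $U'$: such an escaping path would have to pass either through the removed sink $(i,m)$ or through a vertex outside $U$ other than the new source, and both situations are excluded by the sink/source property together with the convexity of $U$.

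The main obstacle is precisely this inductive step: verifying that replacing a sink $(i,m)$ by its Nakayama translate $(\tau i,m-l)$ cannot destroy convexity. The delicate feature is that the new source sits $l$ levels below the removed sink, so it can originate long bound paths (of length up to $l$) that did not exist in $Q'$; one must confirm that the hammock $H^{\tau i}$ swept out by these new paths matches, under $\tau$, the hammock $H_i$ that disappeared with $(i,m)$, and that no new pair of parallel bound paths with one inside and one outside $U'$ is thereby created. The case analysis organizing which of the two vertices lies below the swapped vertex is where the real work lies; once the hammock identity is used to align the two configurations, the remainder is routine bookkeeping on the second indices.
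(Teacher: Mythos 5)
Your proposal follows essentially the same route as the paper: the paper's entire proof is to observe that an initial complete $\tau$-slice $(Q^N,i)$ is convex and then apply Lemma~\ref{refl} with induction on the number of $\tau$-mutations needed to reach $Q^S$ from an initial slice. Your base case via the level grading is correct and spelled out more fully than in the paper, and your sketch of the inductive step (the only part either argument leaves as work) identifies the right ingredients --- the sink/source swap, the hammock identity $H^{\tau i}=H_i$, and the convexity of the unmutated slice --- so the attempt matches the paper's proof in both strategy and substance.
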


\begin{thm}\label{slck}
If $\Lambda$ is a self-injective algebra such that $\Lambda^T$ is Koszul.
Then each of it's $\tau$-slice algebra is a Koszul algebra with finite global dimension.
\end{thm}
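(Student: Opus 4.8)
The finite global dimension is already recorded in Proposition~\ref{fgd}, since a complete $\tau$-slice contains no oriented cycle; so the entire content of the statement is Koszulity, and the plan is to realise each $\tau$-slice algebra as a sufficiently controlled subalgebra of a Koszul algebra. First I would upgrade the hypothesis: combining Proposition~\ref{gcv1} with Proposition~\ref{gcv2} shows that \emph{every} finite orbit covering $\Lambda^{T,r}$ of $\Lambda^T$ is Koszul. Because Koszulity is detected in each bounded range of degrees by the finite-dimensional $\Ext$-spaces between simples, and the covering $\Lambda\# k\mathbb Z^* \to \Lambda^{T,r}$ is an isomorphism on those spaces once $r$ is large relative to the degree in question, I would conclude that the locally finite graded algebra $\Lambda\# k\mathbb Z^*$ is itself Koszul.

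Next I would bring in the slice. By Proposition~\ref{convex} a complete $\tau$-slice $Q^S$ is convex, so Lemma~\ref{convsub} realises $\Lambda^S=k(Q^S)$ as a convex subalgebra of the Koszul algebra $\Lambda\# k\mathbb Z^*$. In particular the relations of $\Lambda^S$ are precisely those relations of $\Lambda\# k\mathbb Z^*$ supported on $Q^S$; since a Koszul algebra is quadratic, these are homogeneous of degree $2$, so $\Lambda^S$ is at least quadratic. Writing $e=\sum_{v\in Q^S_0}e_v$, I would then use the complete-slice conditions together with the absence of oriented cycles in $\overline{Q}$ (Proposition~\ref{nocyc}) to check that no nonzero path between two slice vertices can leave $Q^S$, so that $\Lambda^S$ is identified with $e(\Lambda\# k\mathbb Z^*)e$ and the exact idempotent functor $e\,(-)$ becomes available.

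The heart of the argument is to promote \emph{quadratic} to \emph{Koszul} by showing that every simple $\Lambda^S$-module $S_v$ has a linear minimal projective resolution. For this I would apply $e\,(-)$ to the linear resolution of $S_v$ over $\Lambda\# k\mathbb Z^*$. In homological degree $i$ the $i$-th term of that resolution is generated in internal degree $i$, and, since $e\,(-)$ preserves minimality (differentials keep their entries in the radical), the initial segment in which every term remains a projective $\Lambda^S$-module is a minimal projective $\Lambda^S$-resolution, generated in degree equal to the homological degree. The defining hammock conditions of a complete $\tau$-slice should guarantee that the generators of the $i$-th syzygy stay among the vertices of $Q^S$ for all $i$ below the global dimension of $\Lambda^S$, so that this linear segment already resolves $S_v$ completely; linearity of every such resolution is exactly Koszulity of $\Lambda^S$.

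The step I expect to be the main obstacle is this last one: controlling the passage from the infinite resolution over the self-injective algebra $\Lambda\# k\mathbb Z^*$ to the finite resolution over $\Lambda^S$. Concretely, I must verify that the $\Lambda^S$-syzygy of $S_v$ has already vanished by the homological degree at which the linear resolution over $\Lambda\# k\mathbb Z^*$ first involves a vertex outside $Q^S$, so that truncating the linear resolution genuinely reproduces the minimal $\Lambda^S$-resolution and preserves linearity. This is precisely where the complete-slice conditions, rather than mere convexity, are needed, and where careful bookkeeping with the $\tau$-hammocks $H^v$ and $H_v$ must be carried out.
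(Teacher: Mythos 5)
Your plan is essentially the paper's proof transplanted from the finite orbit algebra to the infinite smash product, and the skeleton coincides step for step: Koszulity of a covering algebra via Propositions~\ref{gcv1} and~\ref{gcv2}, convexity of the complete $\tau$-slice (Proposition~\ref{convex}, Lemma~\ref{convsub}) to identify $\Lambda^S$ with $e^S\Gamma e^S$ for the ambient Koszul algebra $\Gamma$, and then the exact functor $e^S(-)$ applied to a minimal linear resolution, with degree bookkeeping giving linearity over $\Lambda^S$. The genuine deviation is your choice of ambient algebra. The paper takes $\Gamma=\Lambda^{T,r+1}$ with $r$ chosen so that $(r-1)l\le d<rl$ for $d$ the depth of the slice, so Proposition~\ref{gcv2} applies directly; you instead take $\Gamma=\Lambda\# k\mathbb Z^*$, which forces your extra preliminary claim that the smash product itself is Koszul. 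Be careful there: the cited result behind Proposition~\ref{gcv2} (reference \cite{zh}) concerns \emph{finite} Galois coverings, so it does not apply verbatim to the $\mathbb Z$-covering $\Lambda\# k\mathbb Z^*\to\Lambda^T$; your degreewise comparison with $\Lambda^{T,r}$ for $r$ large is the right repair, but it is an additional argument the paper simply never needs. What your route buys in exchange is that the step you flag as the main obstacle closes cleanly, in fact more transparently than in the paper's cyclic quiver $Q^{r+1,T}$ where paths wrap around. Since each $\tau$-orbit meets $Q^S$ exactly once, every vertex $v$ of the component has a well-defined height $h(v)=s$ determined by $\tau^s v\in Q^S_0$, and condition (3) in the definition of a complete $\tau$-slice, together with the $\tau$-equivariance of $\overline{Q}$, shows $h$ is non-decreasing along arrows. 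Consequently: any bound path between two slice vertices stays in the slice (this gives $\Lambda^S\simeq e^S\Gamma e^S$ without further appeal to Proposition~\ref{nocyc}); the tops of all syzygies of a slice-supported module sit at vertices with $h\ge 0$; summands of $e^S P^{(t)}$ at vertices with $h\ge 1$ vanish outright because no path returns to height $0$; and the $h=0$ summands are exactly projective $\Lambda^S$-modules generated in degree $t$. So the whole image $e^S P^{(\bullet)}$, not merely an initial segment, is the minimal linear $\Lambda^S$-resolution, and it terminates automatically since every vertex of $P^{(t)}$ with $t>d$ has second index beyond the slice window and hence $h\ge 1$. With that height argument supplied, your proposal is a correct and complete proof --- and note that this same verification is the one the paper itself passes over in silence when it discards the non-slice summands of $e^S P^{(t)}$ in $\Lambda^{r+1,T}$.
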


\begin{proof} Let $\Lambda^S$ be a $\tau$-slice algebra with the bound quiver $Q^S$.
Assume that the depth of $Q^S$ is $d$.
We may assume that $min\{n |(i,n) \in Q^S_0\} =0$, by shifting suitably.
Let $r$ be a positive number such that $(r-1)l\le d <rl$. Now embed $Q^S$ into $Q^{r+1,T}$ as a full bound subquiver.
We show that $\Lambda^S$ is a subalgebra of $\Lambda^{r+1,T}$, the latter is Koszul, by Proposition ~\ref{gcv2}.
By  Lemma ~\ref{convsub}, $Q^S$ is convex. Let $e^S = \sum_{(i,n)\in Q^S_0}e_{(i,n)}$, then  by Proposition ~\ref{convex},
$\Lambda^S  = e^S \Lambda^{r+1,T}e^S \simeq \Lambda^{r+1,T}/\Lambda^{r+1,T}(1-e^S)\Lambda^{r+1,T} $
is both a subalgebra and a quotient algebra of $ \Lambda^{r+1,T}$.

Let $M$  be a $\Lambda^S$-module which is Koszul as $\Lambda^{r+1,T}$-module, and let
$$\cdots \longrightarrow P^{(t)} \slrw{f_t} \cdots \longrightarrow P^{(0)} \slrw{f_0} M \longrightarrow 0$$
be a minimal projective resolution of $M$ as a $\Lambda^{r+1,T}$-module.
Then $P^{(t)}$ is generated in degree $t$, apply the functor $\hmm{\Lambda^{r+1,T}}{\Lambda^{r+1,T}e^S,\mbox{ }}$, then we get an exact sequence
$$\arr{rrl}{\cdots \longrightarrow &\hmm{\Lambda^{r+1,T}}{\Lambda^{r+1,T}e^S,P^{(t)}}& \slrw{f_t} \cdots \\ \longrightarrow
&\hmm{\Lambda^{r+1,T}}{\Lambda^{r+1,T} e^S, P^{(0)}} &\slrw{f_0} M \longrightarrow 0.
}$$\normalsize
Assume that $P^{(t)} = \bigoplus_{(i,n+t)\in Q^{r+1,T}_0} m(i,n+t)\Lambda^{r+1,T}e_{(i,n+t)}$ for some nonnegative integer $m(i,n+t)$.
We see that $ P^{(t)}$ is generated at degree $t$, and the $t$th term has decomposition as graded vector space with respect to the idempotent $e^S$:
$$\arr{lll}{&& \hmm{\Lambda^{r+1,T}}{\Lambda^{r+1,T}e^S,P^{(t)}} \\ &\simeq &\bigoplus_{(i,n+t)\in Q^{r+1,T}_0} m(i,n+t) e^S \Lambda^{r+1,T}e_{(i,n+t)}\\&=& \bigoplus_{(i,n+t)\in Q^{r+1,T}_0} m(i,n+t) e^S \Lambda^{r+1,T}e_{(i,n+t)}e^S \\ && \oplus \bigoplus_{(i,n+t)\in Q^{r+1,T}_0} m(i,n+t) e^S \Lambda^{r+1,T}e_{(i,n+t)}(1-e^S)\\ &=& \bigoplus_{(i,n+t)\in Q^{S}_0} m(i,n+t) e^S \Lambda^{r+1,T}e_{(i,n+t)} \\ && \oplus \bigoplus_{(i,n+t)\not\in Q^{S}_0} m(i,n+t) e^S \Lambda^{r+1,T}e_{(i,n+t)}.}$$
Since $Q^S$ is convex in $Q^{r+1,T}$, $ e^S \Lambda^{r+1,T}e^S \simeq \Lambda^{S}$, and
$$ \arr{rcl}{\cdots \longrightarrow & \bigoplus_{(i,n+t) \in Q^{S}_0} m(i,n+t) e^S \Lambda^{r+1,T}e_{(i,n+t)} & \slrw{f'_t} \cdots\\ \longrightarrow & \bigoplus_{(i,n+t)\in Q^{S}_0} m(i,n) e^S \Lambda^{r+1,T}e_{(i,n)} &\slrw{f'_0} M \longrightarrow 0 }$$
is a projective resolution of $M$ as a $e^S\Lambda^{r+1,T} e^S\simeq \Lambda^S$-module.
And we have that $\bigoplus_{(i,n+t)\in Q^{S}_0} m(i,n+t) e^S \Lambda^{r+1,T}e_{(i,n+t)}$ is generated at degree $t$.
This shows that $M$ is Koszul as a $\Lambda^S$-module.
Especially $\Lambda^S$ is Koszul algebra when $\Lambda^T$ is so.
\end{proof}

We also know that the global dimension of a special truncated algebra is $l-1$, so we have the following result.

\begin{cor}\label{trk}
A special truncated algebra of a graded self-injective Koszul algebra of Loewy length $l+1$ is Koszul algebra of global dimension $l-1$.
\end{cor}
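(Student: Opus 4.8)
The plan is to recognize the special truncated algebra as the initial member of the family already handled by Theorem~\ref{slck}, and then to compute its global dimension separately. By Proposition~\ref{inialg} a special truncated algebra $\Lambda(Q^N,i_0)=k(Q^N,i_0)$ is exactly an initial complete $\tau$-slice algebra of $\Lambda$, so it is one of the $\tau$-slice algebras covered by Theorem~\ref{slck}. That theorem assumes the weakly symmetric orbit algebra $\Lambda^T$ is Koszul, whereas the corollary only hands us that $\Lambda$ is Koszul, so first I would reconcile the two hypotheses. Since the separated directed quiver $\overline Q$ carries precisely the lifted relations $\overline\rho$ of $\rho$, and $\Lambda$ Koszul means $\rho$ is quadratic, the smash product $\Lambda\# k\mathbb Z^*$ is Koszul exactly when $\Lambda$ is. Koszulity then descends along the Galois covering $\Lambda\# k\mathbb Z^*\to\Lambda^T$: pulling a simple $\Lambda^T$-module up to $\Lambda\# k\mathbb Z^*$, resolving it linearly there, and pushing the resolution back down through the exact, grading-preserving, projective-preserving push-down functor yields a linear resolution over $\Lambda^T$ (the converse direction being \cite{zh} and Proposition~\ref{gcv2}). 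Hence $\Lambda$ Koszul forces $\Lambda^T$ Koszul.

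Granting this, Theorem~\ref{slck} gives at once that $B:=\Lambda(Q^N,i_0)$ is a Koszul algebra of finite global dimension, which is the Koszulity assertion. It remains to pin the global dimension to $l-1$. For the upper bound I would argue directly from the level structure, not using Koszulity: every arrow of $\overline Q$, hence of $Q^N$, raises the second index by exactly one, so the indecomposable projective $P_{(j,n)}$ is supported in levels $n,\dots,l-1$ and its radical is generated in level $n+1$. Consequently the top of $\Omega^{t}S_{(j,n)}$ lies in level $\ge n+t$, and the syzygy must vanish once $n+t\ge l$; thus $\mathrm{pd}\,S_{(j,n)}\le l-1-n$ and $\mathrm{gldim}\,B\le l-1$.

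For the lower bound I would pass to the Koszul dual: because $B$ is Koszul of finite global dimension, $\mathrm{gldim}\,B$ equals the top nonvanishing degree of its Yoneda algebra $E(B)$, so it suffices to show $\mathrm{Ext}^{\,l-1}_{B}(B_0,B_0)\neq 0$. Here I would identify $E(B)$, up to the relevant grading shift, with a finite truncation of the Koszul dual $E(\Lambda)$, which is an Artin--Schelter regular algebra of global dimension $l$ (matching the Loewy length $l+1$ of the Koszul self-injective $\Lambda$, as in the exterior/polynomial case); the Beilinson truncation on the module side corresponds on the dual side to cutting $E(\Lambda)$ off above degree $l-1$ while keeping it nonzero there, equivalently to a maximal bound path of length $l-1$ in $Q^N$ surviving in $E(B)$. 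Combined with the upper bound this gives $\mathrm{gldim}\,B=l-1$, and since the global dimension of a special truncated algebra is already the asserted quantity, the corollary follows. The step I expect to be the real obstacle is precisely this lower bound: one must verify that passing to the truncation $B\simeq e^S\Lambda^{r+1,T}e^S$ used in the proof of Theorem~\ref{slck} does not annihilate the top Yoneda group, i.e. that the nonvanishing of $\mathrm{Ext}^{\,l-1}$ is not lost in the comparison between $B$ and the self-injective algebra $\Lambda^{r+1,T}$.
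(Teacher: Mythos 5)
Your skeleton matches the paper's, which is extremely terse here: the paper obtains the corollary by combining Theorem~\ref{slck} (a special truncated algebra is an initial complete $\tau$-slice algebra, as in Proposition~\ref{inialg}, hence Koszul) with the unproved remark ``we also know that the global dimension of a special truncated algebra is $l-1$.'' What you do differently is to actually supply the two things the paper leaves implicit. First, you correctly notice the hypothesis mismatch --- the corollary assumes $\Lambda$ Koszul while Theorem~\ref{slck} assumes $\Lambda^T$ Koszul --- and bridge it by covering theory (Koszulity passes between $\Lambda$ and $\Lambda\# k\mathbb Z^*$ since the relations $\overline\rho$ are the lifted quadratic relations, and descends along the Galois covering to $\Lambda^T$ by push-down; this is the same circle of ideas as the Zhao--Han result the paper invokes in Proposition~\ref{gcv2}, but in the descent direction, which the paper silently uses). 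Second, your upper bound $\mathrm{gl.dim}\le l-1$ via the level structure is sound and does not even need Koszulity: every arrow of $Q^N$ raises the level by exactly $1$, so the top of $\Omega^{t}S_{(j,n)}$ sits in levels $\ge n+t$ and the syzygy dies once $n+t\ge l$.

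The one genuine soft spot is your lower bound, and you flag it yourself: the identification of $E(B)$ with a truncation of the Artin--Schelter regular algebra $E(\Lambda)$ that remains nonzero in degree $l-1$ is asserted, not proved, and as written it is the step carrying the entire claim $\mathrm{gl.dim}\,B\ge l-1$. It is fixable by a more elementary argument in the spirit of the proof of Theorem~\ref{slck}: take a level-$0$ simple $S_{(i,0)}$ and its minimal linear resolution over the self-injective smash product $\Lambda\# k\mathbb Z^*$ (which exists since $\Lambda$ is Koszul), then apply the exact functor $e^S(-)$ with $e^S=\sum_{(j,n)\in Q^N_0}e_{(j,n)}$. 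Because arrows strictly raise the level, $e^S(\Lambda\# k\mathbb Z^*)e_{(j,m)}=0$ for $m\ge l$, so every projective term generated in level $\ge l$ vanishes, while for $0\le t\le l-1$ the $t$-th term is a nonzero projective $B$-module (syzygies over the self-injective smash product never vanish). The result is a minimal $B$-projective resolution of $S_{(i,0)}$ of length exactly $l-1$, giving $\mathrm{pd}_B S_{(i,0)}=l-1$ directly, without passing through $E(\Lambda)$. With that substitution your argument is complete, and it proves strictly more than the paper writes down.
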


\section{An Example: $m$-cubic Algebra and Absolute $m$-complete Algebra}

In the previous sections we discuss $\tau$-slices, which are truncations   related to the graded self-injective algebras and Nakayama translation $\tau$.
It follows that  hereditary algebras can be obtained in this way, they are $\tau$-slices of the Koszul dual of graded self-injective algebras of Loewy length $3$.

Iyama introduces $n$-complete and absolute $n$-complete algebras in \cite{iy3}.
He also describe absolute $n$-complete algebra by quiver with relations.
In this section we display how Iyama's absolute $n$-complete algebra is recovered as a truncation   related to the Koszul dual of self-injective algebra and Nakayama translation.

Assume that $k$ is an algebraically  closed field of characteristic $0$. Let $V(m)= k^m $ be an $m$-dimensional vector space over $k$.
Let  $\wedge V(m)$ and $ k[V(m)]$ be the exterior algebra and the symmetry algebra, respectively, defined on $V(m)$.
Let $G(m) = (\mathbb Z/(r+1)\mathbb Z)^m$ be a finite Abelian group which is the direct sum of $m$-copies of cyclic group of order $r+1$.
We then show that  Iyama's  absolute $m$-complete algebra is related to our algebras as certain truncation related to the Nakayama translation.
This can be regarded as a higher dimension analog of the well known fact that Dynkin quiver (quiver of representation finite algebra) is a truncation of extended Dynkin quiver (quiver of representation tame algebra) by removing one vertex.

In \cite{g08}, we show that the vertex set of the McKay quiver of $G(m) = (\mathbb Z/(r+1)\mathbb Z)^m$ is indexed by the set $ (\mathbb Z/(r+1)\mathbb Z)^m$.
Each standard basic element $\epsilon_i = (0, \ldots, 0, 1, 0, \ldots, 0)$ provides an arrow from $v=(i_1,\ldots,i_m)$ to $v+\epsilon_i$ for each vertex $v$.
By \cite{gum}, the McKay quiver $Q(m)$ of $G(m)$ is the Gabriel quiver of the skew group algebra $\wedge V(m) * G(m)$, with arrows afforded by a basis of $V(m)$.
We also denote by $Q(m)$ the bound quiver of $\wedge V(m) * G(m)$.
Set $\epsilon_s =(0, \ldots, 0,1 ,0 \ldots, 0)$, the $m$-dimensional unit vector with only nonzero element $1$ at the $s$th position.
The following proposition restates this fact.
We provide a proof which also shows how the new arrows appear as the dimension growing.

\begin{prop}\label{cubicqv} There is an embedding of $G(m) = (\mathbb Z/(r+1)\mathbb Z)^m$ in $\mathrm{GL}(m,k)$ such that the McKay quiver $Q=Q(m)$ of $G(m)$  has vertex set $Q_0=(\mathbb Z/(r+1)\mathbb Z)^m$ and arrow set $\{\alpha_{i,v}: v \to v+\epsilon_i|v \in Q_0, i = 1, \ldots, m\}$, with Nakayama translation $\tau(m)$  sending each vertex $v$ to $v-(1,\ldots,1)$.\end{prop}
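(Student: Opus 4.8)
The plan is to exhibit $G(m)$ concretely as a group of diagonal matrices, identify its irreducible characters with the index set $(\mathbb Z/(r+1)\mathbb Z)^m$, and then read off the arrows and the Nakayama translation directly from the representation theory of the skew group algebra $\wedge V(m)*G(m)$. First I would fix a primitive $(r+1)$-th root of unity $\zeta \in k$, which exists since $k$ is algebraically closed of characteristic $0$. Writing $g_s$ for a generator of the $s$-th cyclic factor, I define the embedding $G(m)\hookrightarrow \mathrm{GL}(m,k)$ by sending $g_s$ to the diagonal matrix with $\zeta$ in the $s$-th entry and $1$ elsewhere. Because $G(m)$ is abelian and $k$ is algebraically closed, all its irreducibles are one-dimensional, and I identify the character $\chi_v$ with $v=(v_1,\dots,v_m)$ via $\chi_v(g_s)=\zeta^{v_s}$; this produces the vertex set $Q_0=(\mathbb Z/(r+1)\mathbb Z)^m$. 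Under this embedding the defining representation splits as $V(m)=\bigoplus_{i=1}^m k e_i$, where the line $k e_i$ affords exactly the character $\chi_{\epsilon_i}$.

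Next I would compute the arrows. By \cite{gum} the McKay quiver of $G(m)$ is the Gabriel quiver of $\wedge V(m)*G(m)$, whose arrows out of $\chi_v$ are controlled by the decomposition of $V(m)\otimes\chi_v$. Since $V(m)\otimes\chi_v=\bigoplus_{i=1}^m \chi_{v+\epsilon_i}$, there is precisely one arrow $\alpha_{i,v}\colon v\to v+\epsilon_i$ for each $i$ and each $v$, which is the asserted arrow set, the basis vector $e_i$ affording the arrows in the $\epsilon_i$-direction. Viewing $V(m)=V(m-1)\oplus k e_m$ and $G(m)=G(m-1)\times \mathbb Z/(r+1)\mathbb Z$ makes the induction transparent: the summand $V(m-1)$ reproduces the arrows in directions $\epsilon_1,\dots,\epsilon_{m-1}$, while the new line $k e_m$ contributes exactly the new arrows in the $\epsilon_m$-direction. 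This is the sense in which the arrows appear as the dimension grows.

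Finally I would determine the Nakayama translation. The exterior algebra $\wedge V(m)$ is a graded local Frobenius algebra of Loewy length $m+1$ whose one-dimensional socle $\wedge^m V(m)$ sits in top degree $m$; since $\mathrm{char}\,k=0$ and $G(m)$ is finite, $\wedge V(m)*G(m)$ is again graded self-injective of the same Loewy length, so by Theorem~\ref{iffsi} its bound quiver carries a Nakayama translation $\tau(m)$. The indecomposable projective $P(\chi_v)$ has $\wedge V(m)\otimes\chi_v$ as underlying space, so its socle is the top-degree part $\wedge^m V(m)\otimes\chi_v$, which as a $G(m)$-character equals $\det\otimes\chi_v$. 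As $\det(g_s)=\zeta$ for every $s$, the determinant is the character $\chi_{(1,\dots,1)}$, whence $\mathrm{soc}\,P(\chi_v)\cong S_{v+(1,\dots,1)}$. Comparing with the identity $\mathrm{soc}\,P_i\cong S_{\tau^{-1}i}$ from the proof of Theorem~\ref{iffsi} gives $\tau(m)^{-1}v=v+(1,\dots,1)$, that is $\tau(m)v=v-(1,\dots,1)$; this agrees with the combinatorial picture that a maximal path of length $m$ out of $v$ traverses all directions $\epsilon_1,\dots,\epsilon_m$ and so ends at $v+(1,\dots,1)$.

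The hard part will be this last step: one must check carefully that $\wedge V(m)*G(m)$ is self-injective with the same Loewy length $m+1$ as $\wedge V(m)$, that the socle of each indecomposable projective is genuinely $\wedge^m V(m)\otimes\chi_v=\det\otimes\chi_v$ as a $G(m)$-character, and that the sign convention is handled correctly so that the determinant shift $v\mapsto v+(1,\dots,1)$ on socles becomes $\tau(m)v=v-(1,\dots,1)$ on vertices. Here the inductive identity $\wedge^m V(m)=\wedge^{m-1}V(m-1)\otimes k e_m$ is again clarifying, since it shows that the determinant character accumulates exactly one unit in each coordinate, matching the total shift by $(1,\dots,1)$.
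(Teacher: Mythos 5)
Your proposal is correct, but it proves the proposition by a genuinely different route than the paper. You fix the diagonal embedding $g_s \mapsto \mathrm{diag}(1,\ldots,\zeta,\ldots,1)$ once and for all and read everything off from the character theory of the abelian group: the vertices from $\chi_v(g_s)=\zeta^{v_s}$, the arrows from $V(m)\otimes\chi_v=\bigoplus_i\chi_{v+\epsilon_i}$ (equivalently, from the degree-one part of $(\wedge V(m)*G(m))e_{\chi_v}\cong \wedge V(m)\otimes \chi_v$ with the diagonal $G$-action), and the Nakayama translation from $\mathrm{soc}\,P(\chi_v)=\wedge^m V(m)\otimes\chi_v$, which affords $\det\cdot\,\chi_v=\chi_{v+(1,\ldots,1)}$, so that $\tau^{-1}v=v+(1,\ldots,1)$ by the identity $\mathrm{soc}\,P_i\cong S_{\tau^{-1}i}$ from the proof of Theorem~\ref{iffsi}. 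The paper instead argues by induction on $m$ through covering theory: it first embeds $G(m-1)$ into $\mathrm{SL}(m,k)$, where by \cite{gcv} the McKay quiver acquires one returning arrow $v\to\tau(m-1)v$ per vertex and the Nakayama translation becomes trivial, then adjoins a central element $\zeta_m$ and uses Theorem 1.2 of \cite{gcv} to realize $Q(m)$ as a $\mathbb Z/(r+1)\mathbb Z$-covering of the $\mathrm{SL}$-quiver, followed by a re-indexing of vertices; $\tau(m)$ is then determined from the description in \cite{gum} of maximal bound paths as those traversing arrows afforded by all $m$ distinct basis vectors. The paper explicitly chooses this longer route because it exhibits how the new arrows appear as the dimension grows and ties the example to the paper's covering-theoretic theme; your argument is shorter and more self-contained, essentially re-deriving the quiver description quoted from \cite{g08} rather than citing it. Your socle computation is sound (for abelian $G$ one has $ge_\chi=\chi(g)e_\chi$, so $P(\chi_v)=\wedge V(m)\,e_{\chi_v}$ carries the diagonal action $g\cdot ae_{\chi_v}=\chi_v(g)\,g(a)e_{\chi_v}$, and the top-degree part affords $\det\cdot\,\chi_v$), and the standard facts you flag as the hard part --- that $\wedge V(m)*G(m)$ is graded self-injective of Loewy length $m+1$ when $\mathrm{char}\,k=0$, since $\mathrm{rad}(\Lambda*G)=(\mathrm{rad}\,\Lambda)*G$ for $|G|$ invertible --- are indeed true and routine; just make sure the arrow orientation of the McKay quiver you use (arrows $v\to v+\epsilon_i$ from summands of $V\otimes\chi_v$) matches the convention of \cite{g08} used in the paper, which it does.
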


\begin{proof} We prove this proposition by induction on $m$.

Consider cyclic subgroup $G(1)= \mathbb Z/(r+1)\mathbb Z$ of order $r+1$ of $k^* = \mathrm{GL}(1,k)$.
By \cite{gcv}, the McKay quiver $Q(1)$ of $G(1)$ is an affine quiver of type $\widetilde{A}_r$ with cyclic orientation.
Its vertices are marked by the elements in $\mathbb Z/ (r+1)\mathbb Z$.
Let $V(1)=k$ be a one-dimensional vector space with basis $x_1$.
The arrows of $Q(1)$ are afforded by $x_1$, with one at each vertex $i$, $\alpha_i: i \to i+1$.
Since $\wedge V(1)$ is an algebra with vanishing radical square, the same holds for $\wedge V(1)*G(1)$.
So the relations are just paths of length $2$ and the Nakayama translation $\tau(1)$ sends $i$ to $i-1$.

Now assume that $m > 1$ and  the proposition holds for $m-1$.
We also assume that there is a flag of subspaces
$$V(1)\subset V(2) \subset \cdots \subset V(m-1),$$
such that $V(t)$ is a $t$-dimensional subspace and $G(t) \subset \mathrm{GL}(t,k)$ acts on $V(t)$ naturally.
Let $x_1, \ldots, x_{m-1}$ be a basis of $V(m-1)$ such that $x_t \in V(t)$.
$G(m-1)$ is a subgroup of  $\mathrm{GL}(m-1, k) $.
Its McKay quiver $Q(m-1)$ is the bound quiver of the skew group algebra $\wedge V(m-1)*G(m-1)$,
and $x_t$ affords the arrow $v \to v+\epsilon_t$ for $t=1, \ldots, m-1$.
The Nakayama translation $\tau(m-1)$ defined by the Nakayama functor of $\wedge V(m-1)*G(m-1)$ sends $v$ to $v-\sum_{t=1}^{m-1} \epsilon_t$

By \cite{gcv}, we have an embedding of vector space $V(m-1)$ in $V(m)$ with an additional basic element $x_m$.
This leads to a natural embedding of $G(m-1)$ in  $\mathrm{SL}(m,k)$ with image $G'(m)$, whose McKay quiver $Q'(m)$ has the same vertex set as  $Q(m-1)$.
$Q'(m)$ is obtained from  $Q(m-1)$ by adding an arrow from $v$ to $\tau({m-1}) v$ for each vertex $v$.
The new arrows are  afforded by $x_m$ and the Nakayama translation $\tau'(m)$ of $Q'(m)$ is trivial.

Choose an element $\zeta_{m}$ of order $r+1$ in the center of $\mathrm{GL}(m,k)$ such that $(\zeta_{m}) \cap \mathrm{SL}(m,k) = \{1\}$.
For example, take $\zeta_{m} $ as the diagonal matrix with all diagonal elements $1$ except for the last one, which is taken as an $(r+1)$th primitive root of the unit.
The subgroup $ G(m) $ of $\mathrm{GL}(m,k)$ generated by $G'(m) $ and $\zeta_{r+1}$ is isomorphic to $(\mathbb Z/ (r+1)\mathbb Z)^{m}$. And  $G(m)\cap \mathrm{SL}(m,k) = G'(m)$.

Since $G'(m)$ is a direct summand of $G(m)$, its irreducible representations are extendible to $G(m)$.
By Theorem 1.2 of \cite{gcv}, the McKay quiver $Q(m)$ of $G(m)$ is a covering of that of $G'(m)$ with the group isomorphic to $G(m)/G'(m)  \simeq \mathbb Z/ (r+1)\mathbb Z$.
It follows  that the vertex set of $Q(m)$  is $Q_0(m-1) \times (\zeta_m)$.
We identify the vertex set of $Q(m)$ with $(\mathbb Z/(r+1)\mathbb Z)^m$, with $(0,\ldots,0,t)$ for $\zeta_m^t$.
The arrows afforded by  $x_1, \cdots, x_{m-1}$ are within in each copy $Q(m-1) \times \zeta_m^t =Q(m-1) \times t $ of $Q(m-1)$ in $Q(m)$.
The new basic element $x_m$ affords a new arrow from $(v,t) $ to $(\tau({m-1}) v, t+1)$ for each vertex  $(v,t) $ of $Q(m)$, here $v \in Q_0(m-1)$ and $t\in \mathbb Z/(r+1)\mathbb Z$.
The arrows afforded by $x_m$ can be regarded as obtained by replacing the new arrow in $Q'(m)$ of $G'(m)$ from the vertex $v$ to $\tau'(m) v$ by an arrow from $(v,t) $ to $(\tau'({m-1}) v, t+1)$ for each $t$.

Re-indexing the vertex set by replacing $(a_1, \ldots, a_{m-1}, a_m)$ with $$(a_1, \ldots, a_{m-1}, a_m) +(a_m, \ldots, a_m, 0).$$
Then for each vertex $w \in Q_0(m)$, there is exactly one arrow in the McKay quiver $Q(m)$ of $G(m)$ going from $w$ to $w + \epsilon_s$ for each $s=1, \ldots, m$.
In particular, the arrow afforded by the new basic element $x_m$ in the additional dimension goes from $w$ to $w+\epsilon_m$.

Now we determine the Nakayama translation $\tau(m)$ for $Q(m)$.
It is known  the Nakayama translation are determined by the bound paths of the maximal length of $\wedge V(m)*G(m)$.
By \cite{gum}, such paths are just paths formed by arrows afforded by all the different basic elements.
Since each dimension affords an arrow from $w$ to $w+\epsilon_s$, so maximal path goes from $w$ to $w+(1,1,\ldots,1)$ and we have $\tau({m})^{-1} w = w + (1,1,\ldots,1)$, so  $\tau(m) w = w - (1,1,\ldots,1)$.

By induction, this proves our proposition. \end{proof}

It follows from \cite{gum} that the relations of $\wedge V(m)$ afford the relations of $\wedge V(m)*G(m)$.
Denote by $\alpha(v,t)$ the arrow in $Q(m)$ from the vertex $v$ to $v+\epsilon_t$.
It follows that the relations for $\wedge V(m)*G(m)$ are of the form $$\alpha(v+\epsilon_t,t) \alpha(v,t) \mbox{ and }\alpha(v+\epsilon_t,s) \alpha(v,t) + \alpha(v+\epsilon_s,t) \alpha(v,s)$$ for all $v \in Q_0(m)$ and $1 \le s,t \le m$.
We see easily that the following proposition holds.

\begin{prop}\label{cubic-h}
A $\tau$-hammock $H^v=H_{\tau^{-1}_m v}$ starting with vertex $v$ in $Q(m)$ is an $m$-dimensional cube with vertex
$$\{w|v \le w \le v+(1,1,\ldots,1)\}.$$
\end{prop}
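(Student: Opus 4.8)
The plan is to read the hammock directly off the arrow combinatorics of $Q(m)$ recorded in Proposition~\ref{cubicqv}, combined with the relations afforded by $\wedge V(m)$ that are listed just before the statement. First I would describe an arbitrary path starting at $v$. Every arrow of $Q(m)$ has the form $\alpha(w,i):w\to w+\epsilon_i$ for a unique direction $i\in\{1,\dots,m\}$, and each vertex emits exactly one arrow in each direction, so any sequence of directions really does yield a path in the quiver. A path $p=\alpha(w_{s-1},i_s)\cdots\alpha(w_0,i_1)$ with $w_0=v$ therefore ends at $v+\epsilon_{i_1}+\cdots+\epsilon_{i_s}=v+\sum_{j}\epsilon_{i_j}$, so its endpoint depends only on the multiset of directions $i_1,\dots,i_s$ used along it.

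The key step is to decide which such paths are bound. Since the relations of $\wedge V(m)*G(m)$ are afforded by those of $\wedge V(m)$, the path $p$ above is nonzero in the algebra exactly when the product $x_{i_s}\cdots x_{i_1}$ is nonzero in the exterior algebra. Here the relation $\alpha(v+\epsilon_t,t)\alpha(v,t)$ encodes $x_t^2=0$ and the relation $\alpha(v+\epsilon_t,s)\alpha(v,t)+\alpha(v+\epsilon_s,t)\alpha(v,s)$ encodes $x_sx_t+x_tx_s=0$, so $x_{i_s}\cdots x_{i_1}\neq0$ if and only if the indices $i_1,\dots,i_s$ are pairwise distinct. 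Consequently a bound path from $v$ uses each direction at most once, and its endpoint is $v+\sum_{i\in S}\epsilon_i$ for the set $S=\{i_1,\dots,i_s\}\subseteq\{1,\dots,m\}$ of directions occurring in it.

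It then remains to verify both inclusions. The previous paragraph shows every endpoint of a bound path lies in $\{v+\sum_{i\in S}\epsilon_i\mid S\subseteq\{1,\dots,m\}\}=\{w\mid v\le w\le v+(1,\dots,1)\}$. Conversely, given any $S\subseteq\{1,\dots,m\}$, listing its elements in increasing order gives a product $x_{i_k}\cdots x_{i_1}$ of distinct generators, which is a basis element of $\wedge V(m)$ and hence nonzero; this exhibits an honest bound path from $v$ to $v+\sum_{i\in S}\epsilon_i$. Once $r\ge1$ the $2^m$ vertices $v+\sum_{i\in S}\epsilon_i$ are pairwise distinct in $(\mathbb{Z}/(r+1)\mathbb{Z})^m$, since two distinct subsets differ by $1\not\equiv0$ in some coordinate, and the arrows of $Q(m)$ lying inside this vertex set are precisely the edges $w\to w+\epsilon_i$ of the cube. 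Thus the full subquiver $H^v$ is exactly the $m$-dimensional cube on these vertices, and the equality $H^v=H_{\tau_m^{-1}v}$ is the general identity $H^{\tau u}=H_u$ of Section~6 applied with $u=\tau_m^{-1}v$.

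The part demanding the most care is the second paragraph: one must argue cleanly that boundedness of a path in the skew group algebra $\wedge V(m)*G(m)$ is controlled solely by its exterior part, so that the whole question collapses to nonvanishing of a monomial in the generators of $\wedge V(m)$. Granting that reduction, the endpoint formula and the distinctness of the cube's vertices are routine bookkeeping.
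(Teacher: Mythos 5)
Your proposal is correct and follows exactly the route the paper intends: the paper omits the proof as immediate (``we see easily'') from the relation description $\alpha(v+\epsilon_t,t)\alpha(v,t)$ and $\alpha(v+\epsilon_t,s)\alpha(v,t)+\alpha(v+\epsilon_s,t)\alpha(v,s)$, and your write-up simply fills in the bookkeeping that bound paths are precisely those using pairwise distinct directions, with endpoints $v+\sum_{i\in S}\epsilon_i$ for $S\subseteq\{1,\ldots,m\}$. The only point worth noting is the degenerate case $r=1$, where coordinates live in $\mathbb{Z}/2\mathbb{Z}$ and wrap-around arrows $w\to w+\epsilon_i$ with $i$ already ``used'' also land inside the vertex set, so the full subquiver acquires arrows beyond the increasing cube edges — a boundary case the paper itself glosses over and which does not affect the vertex-set statement or its later use.
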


Let $Q=Q(m)$, and let $\overline{Q}$ be its separated directed quiver.
We now describe an initial $\tau$-slices in the quiver $\overline{Q}$ containing the vertex $(i_1,\ldots,i_m, 0)$ in the following proposition.

\begin{prop}\label{cubslice}
The full subquiver of $\overline{Q}$ with the vertex set
$$\{(i_1+a_1,\ldots,i_m+a_{m},l)|0\le l \le m-1, a_i\in \mathbb Z, -m<a_i<m, \sum_{t=1}^m a_t = l \}$$
is an initial $\tau$-slice.
\end{prop}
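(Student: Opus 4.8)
The plan is to verify directly that the given full subquiver, call it $S$, satisfies the definition of an initial complete $\tau$-slice; equivalently, by Proposition \ref{inialg}, that its vertex set coincides with that of a special truncated quiver $(Q^N,i_0)$. Throughout I would work in the coordinates of Proposition \ref{cubicqv}: the arrows of $Q=Q(m)$ have the form $v\to v+\epsilon_s$, the maximal bound paths run from $w$ to $w+\mathbf 1$ with $\mathbf 1=(1,\dots,1)$ so that the Loewy length is $m+1$ and $l=m$, and the Nakayama translation on $\overline Q$ is $\tau(v,n)=(v-\mathbf 1,\,n-m)$. Writing a vertex of $S$ as $(i_0+a,l)$ with $\sum_t a_t=l$, the level is forced to equal $\sum_t a_t$, so the condition $0\le l\le m-1$ says exactly $0\le\sum_t a_t\le m-1$. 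In particular $\max\{l\}-\min\{l\}=m-1=l-1$, which gives $d(S)=m-1$ and shows $S$ is initial the moment it is known to be a complete $\tau$-slice.

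For the $\tau$-slice property I would first show each $\tau$-orbit meets $S$ in at most one vertex. If $(i_0+a,l)$ and $(i_0+a',l')$ lie in one orbit, then $a'=a-k\mathbf 1$ and $l'=l-km$ for some $k\in\mathbb Z$; since $l,l'\in\{0,\dots,m-1\}$ we get $|k|\,m=|l-l'|\le m-1<m$, forcing $k=0$ and $a=a'$. The reverse inclusion, that every orbit of $(\overline Q,i_0)$ does meet $S$, amounts to showing the box $B=\{a:-m<a_t<m,\ 0\le\sum_t a_t\le m-1\}$ carries a representative of each orbit of the action $a\mapsto a-\mathbf 1$; here I would use the $m$-cube description of the hammocks $H^v=\{w:v\le w\le v+\mathbf 1\}$ from Proposition \ref{cubic-h} to organise the count. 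The inclusion $S\subseteq(\overline Q,i_0)$ and the connectedness of $S$ follow from the same description: $(i_0,0)\in S$, and any two lattice points of $B$ are joined by steps $a\mapsto a\pm\epsilon_s$ that remain inside $B$.

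Finally I would check completeness through the cube hammocks. A short computation with the inequalities identifies the sources of $S$ as the vertices with $\sum_t a_t=0$ and the sinks as those with $\sum_t a_t=m-1$. For a source $v$ one has $\tau^{-1}v=v+\mathbf 1$, whose coordinate sum is $m>m-1$, so $\tau^{-1}v\notin S$; the remaining vertices $v+\delta$ of $H^v$ (with $\delta\in\{0,1\}^m$, $\delta\ne\mathbf 1$) have coordinate sum at most $m-1$, and once one confirms they also obey $-m<\cdot<m$ one obtains $S\cap H^v=H^v\setminus\{\tau^{-1}v\}$, i.e. $v$ is $\tau$-initial; the dual argument at the sinks gives $\tau$-terminality, and the arrow condition follows since for each arrow $v\to u=v+\epsilon_s$ of a cube either $u$ or $\tau u$ lies in $S$. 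The main obstacle is exactly this boundary bookkeeping: for cubes $H^v$ straddling the faces $\sum_t a_t=0$, $\sum_t a_t=m-1$, or $a_t=\pm(m-1)$, one must show that the missing corners are folded back into $B$ by the Nakayama translation together with the cyclic identification $a_t\equiv a_t-(r+1)$, so that $B$ is an exact fundamental domain and each source (resp. sink) is genuinely $\tau$-initial (resp. $\tau$-terminal). Everything outside this boundary analysis is routine lattice arithmetic.
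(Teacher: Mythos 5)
Your overall strategy is the right one: reduce, via Proposition~\ref{inialg}, to showing that the displayed vertex set $S$ coincides with the vertex set of the special truncated quiver $(Q^N,i_0)_0=\{(j,n)\in(\overline Q,i_0)\mid 0\le n\le m-1\}$, and your ``at most one point per $\tau$-orbit'' argument via the levels $l-km$ is correct. (For the record, the paper offers no proof to compare with --- it states only that ``the proof is direct and we omit it.'') But the step you defer to ``boundary bookkeeping'' --- that the box $B=\{a\in\mathbb Z^m: -m<a_t<m,\ 0\le\sum_t a_t\le m-1\}$ covers every $\tau$-orbit after the cyclic identification --- is a genuine gap, and in fact it cannot be closed for the statement as printed. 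Since $\tau$ shifts the level by $-m$, every $\tau$-orbit of $(\overline Q,i_0)$ meets the band $0\le n\le m-1$ in exactly one vertex; hence any $\tau$-slice whose vertices lie in that band (as those of $S$ do) must contain \emph{every} band vertex, i.e.\ must equal $(Q^N,i_0)_0$. Now the component $(\overline Q,i_0)$ contains, at level $n$, all vertices $i_0+\bar a$ with $\sum_t a_t\equiv n \bmod (r+1)$, so the band has $m(r+1)^{m-1}$ vertices, a number growing with $r$, whereas $|S|$ is bounded independently of $r$. Concretely, take $m=2$, $r+1=3$: the vertex $(i_1+2,i_2+2,1)$ lies in $(\overline Q,i_0)$ at level $1$ (the class of $(-1,-1)$ has weight $\equiv 1$), but the integer lifts $a\equiv(-1,-1)\bmod 3$ with $a_1+a_2=1$ are $(2,-1),(-1,2),(5,-4),\dots$, none satisfying $|a_t|\le m-1=1$; so its $\tau$-orbit misses $S$ entirely. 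Your hoped-for ``folding back'' fails even in the pure lattice model: the orbit of $(5,-4)$ under $a\mapsto a-(1,\dots,1)$ has $(5,-4)$ as its unique member with coordinate sum in $\{0,1\}$, and it is not in $B$, so $B$ is not a fundamental domain there either.

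Consequently your source/sink identification ($\sum_t a_t=0$ and $\sum_t a_t=m-1$) and the completeness check also rest on the failed coverage claim: once $S$ is a proper subset of the band, intermediate vertices of $S$ can acquire spurious source/sink behaviour. What your reduction \emph{does} buy you --- and this is worth making explicit --- is an immediate test that the printed index set is wrong for $r+1\ge 3$ (the cardinality comparison above), and a repair: the statement becomes true, with an essentially one-line proof along your lines, if one replaces the conditions $-m<a_t<m$, $\sum_t a_t=l$ by lifts $0\le a_t\le r$ with $\sum_t a_t\equiv l \bmod (r+1)$, for then ``$S=$ band'' is exactly the description of the connected component restricted to levels $0\le l\le m-1$, and Proposition~\ref{inialg} finishes the argument. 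As printed, the proposition holds only in degenerate cases such as $r=1$, where the mod-$2$ identifications happen to make $S$ the full band.
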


The proof is direct and we omit it. It is interesting to describe the other $\tau$-slices, especially those with maximal depth.

\medskip

It is known that $\Lambda(m)=\wedge V(m)*G(m)$ is a self-injective Koszul algebra of complexity $m$.
Its Koszul dual is $\Gamma(m) = k[x_1,\ldots,x_m] * G(m) $.
$\Gamma(m)$ is an Artin-Schelter regular algebra with the same quiver, we call this algebra {\em $m$-cubic algebra} and denote its bound quiver by ${Q}^*(m)$.
The vertex set and the arrow set of ${Q}^*(m)$ are the same as those of $Q(m)$.
We have the following proposition describing its relations.

\begin{prop}\label{cbrelation}
The relation set of  $Q^*(m)$ is
$$\{\alpha(v+\epsilon_t,s)\alpha(v,t) - \alpha(v+\epsilon_s,t) \alpha(v,s)| v \in Q_0(m),1 \le s, t \le m \}.$$
\end{prop}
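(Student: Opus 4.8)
The plan is to read off the relations of $\Gamma(m) = k[x_1,\ldots,x_m]*G(m)$ directly from the relations of the polynomial ring $k[x_1,\ldots,x_m]$, exactly as was done for $\wedge V(m)*G(m)$ in the paragraph preceding Proposition~\ref{cubic-h}. By the result of \cite{gum} already invoked there, the relations of the defining algebra afford the relations of its skew group algebra, so the relations of $\Gamma(m)$ are obtained by transporting the quadratic relations of $k[x_1,\ldots,x_m]$ along the quiver $Q^*(m)$. Since $k[x_1,\ldots,x_m]$ is the commutative polynomial algebra, its space of quadratic relations is spanned by the commutators $x_sx_t-x_tx_s$ for $1\le s,t\le m$, and there is no relation of the form $x_t^2$.

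First I would fix the correspondence between products in the ring and paths in $Q^*(m)$. The basis element $x_t$ affords, at each vertex $v$, the arrow $\alpha(v,t)\colon v\to v+\epsilon_t$. Reading path composition from right to left, the monomial $x_sx_t$ acting at $v$ is realized by first applying $x_t$ and then $x_s$, that is, by the length-two path $\alpha(v+\epsilon_t,s)\alpha(v,t)\colon v\to v+\epsilon_t\to v+\epsilon_t+\epsilon_s$; symmetrically $x_tx_s$ is realized by $\alpha(v+\epsilon_s,t)\alpha(v,s)$. Both paths end at $v+\epsilon_s+\epsilon_t$, so the commutator $x_sx_t-x_tx_s$ transports to
$$\alpha(v+\epsilon_t,s)\alpha(v,t)-\alpha(v+\epsilon_s,t)\alpha(v,s)$$
at each vertex $v$, which is precisely the asserted relation. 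For $s=t$ this expression is identically $0$, so allowing $s=t$ in the indexing set adds nothing; correspondingly, the absence of an $x_t^2$ relation means $\alpha(v+\epsilon_t,t)\alpha(v,t)\neq 0$ in $\Gamma(m)$, as it must be.

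As an independent check, and the route I would take were the \cite{gum} transport principle not available, I would compute the relations of $\Gamma(m)=\Lambda(m)^{!}$ via Koszul duality. Since $\Lambda(m)$ is Koszul, $\Gamma(m)$ is presented on the same quiver with relation space orthogonal, under the natural pairing of length-two paths, to that of $\Lambda(m)$. For $s\neq t$ the two paths $\alpha(v+\epsilon_t,s)\alpha(v,t)$ and $\alpha(v+\epsilon_s,t)\alpha(v,s)$ span a two-dimensional space in which the exterior relation is their sum; the orthogonal complement is then spanned by their difference, giving exactly the stated relation. For $s=t$ the single path $\alpha(v+\epsilon_t,t)\alpha(v,t)$ is itself the exterior relation $x_t^2=0$, so it exhausts the one-dimensional space and the orthogonal complement is zero, again yielding no relation in $\Gamma(m)$.

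The only genuinely delicate point is the bookkeeping: fixing once and for all the right-to-left path-composition convention together with the re-indexing of vertices from Proposition~\ref{cubicqv}, so that the two length-two paths between $v$ and $v+\epsilon_s+\epsilon_t$ are named consistently and the commutator acquires a minus sign, rather than the plus sign of the exterior case. Once this convention is pinned down the identification is immediate, and I expect no serious obstacle beyond this careful matching of notation.
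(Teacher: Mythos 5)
Your proposal is correct, and in fact it contains the paper's own proof inside it: the paper's argument for Proposition~\ref{cbrelation} is precisely your ``independent check,'' namely that $\Gamma(m)$ is the quadratic dual of $\Lambda(m)=\wedge V(m)*G(m)$, so its relation space is the orthogonal complement, under the pairing on length-two paths, of the already-established relations $\alpha(v+\epsilon_t,t)\alpha(v,t)$ and $\alpha(v+\epsilon_t,s)\alpha(v,t)+\alpha(v+\epsilon_s,t)\alpha(v,s)$; your computation (difference orthogonal to sum in the two-dimensional space for $s\neq t$, zero complement in the one-dimensional space for $s=t$) is exactly what that citation compresses. Your \emph{primary} route --- transporting the commutators $x_sx_t-x_tx_s$ of $k[x_1,\ldots,x_m]$ directly along the quiver via the affording principle of \cite{gum} --- is a genuinely different path that the paper does not take for $\Gamma(m)$: within this paper that principle is only invoked for the exterior side, $\wedge V(m)*G(m)$, so as written you would need to verify that \cite{gum} covers the symmetric-algebra member of the algebra pair as well. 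It does work here, and your bookkeeping is sound: since $G(m)$ acts diagonally, each $x_t$ is a simultaneous eigenvector, so the commutators are $G(m)$-homogeneous and transport to each vertex without acquiring extra scalar factors --- a point worth making explicit, as it is where the transport argument could fail for a nonabelian group or a non-eigenbasis. The trade-off is that the dual computation is self-contained given Proposition~\ref{cubicqv} and the stated relations of $\Lambda(m)$, while the transport argument is more direct but leans on an external fact in greater generality than the paper establishes.
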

\begin{proof} This follows from the fact that $\Gamma(m)$ is the quadratic dual of $\wedge V(m)*G(m)$, and the description of the relations of $\wedge V(m)*G(m)$ below Proposition ~\ref{cubicqv}.
\end{proof}

In \cite{iy3}, Iyama introduces and studies $n$-complete algebra.
Let $\Lambda$ be a finite dimensional algebra with $\mathrm{gl.dim} \Lambda \le n$, and let $\tau_n = D \mathrm{Ext}_{\Lambda}^n (\quad , \Lambda)$ be the $n$-Auslander-Reiten translation.
Let $$\mathcal M =\mathrm{add}\{\tau_n^t D \Lambda | t \ge 0\},$$ $$\mathcal P=\mathcal P(\mathcal M) =\{X \in \mathcal M | \tau_n X=  0\}$$ and $$\mathcal M_{\mathcal P} =\{X \in \mathcal M | X \mbox{ has no} \mbox{ nonzero}\mbox{ summand}\mbox{ in } \mathcal P \}.$$
$\Lambda$ is called an {\em $n$-complete algebra} if the following conditions are satisfied.
\begin{enumerate}
\item There exists a tilting $\Lambda$-module $T$ satisfying $\mathcal P(\mathcal M) = \mathrm{add} T$,

\item $\mathcal M$ is an $n$-cluster tilting subcategory of $T^{\perp}$,

\item $\mathrm{Ext}_{\Lambda}^t (\mathcal M_{\mathcal P} , \Lambda)=0$ for any $0 < t < n$.
\end{enumerate}
$\Lambda$ is called  {\em absolute $n$-complete } if $\mathcal P(\mathcal M) = \mathrm{add} \Lambda$.
In \cite{iy3},  absolute $n$-complete algebras are classified.
Now we show how an $m$-cubic algebra and Iyama's absolute $n$-complete algebra $T_r^{(m)}(k)$ over $k$ are related.

\medskip
The quiver of $T_r^{(m)}(k)$ is given in \cite{iy3} as follow.

For $1 \le i \le r$, let $$\Delta_i=\Delta_i^{m-1}=\{(h_1,\ldots,h_{m-1}) \in \mathbb Z^{m-1}| h_1,\ldots,h_{m-1} \ge 0, h_1 + \ldots + h_{m-1} \le r-i \}.$$
For $1 \le t \le m-1$, put $v_t= \left\{\arr{ll}{-\epsilon_1&t=1\\ \epsilon_{t-1}-\epsilon_t& 1<t \le m-1}\right.$
The vertex set of ${Q}^{(m)}$ is
$${Q}^{(m)}_0 = \{(i, \mathbf h)| 1 \le i \le r, \mathbf h \in \Delta_{i} \}.$$
The arrow set ${Q}^{(m)}_1$ of ${Q}^{(m)}$ is
$$\arr{rl}{\{(i,\mathbf h ) \to (i+1,\mathbf h)|1\le i \le r-1, (i,\mathbf h ),(i+1,\mathbf h) \in {Q}^{(m)}_0 \}&\cup\\ \{(i,\mathbf h ) \to (i-1,\mathbf h+v_1)|1< i \le r, (i,\mathbf h ),(i-1,\mathbf h+v_1) \in {Q}^{(m)}_0 \}&\cup\\ \{(i,\mathbf h ) \to (i,\mathbf h +v_t)|1\le i \le r-1, (i,\mathbf h ),(i,\mathbf h +v_t) \in {Q}^{(m)}_0, 1<t \le m-1\}
.}$$
The $m$-Auslander-Reiten are given by $\tau_m (i,\mathbf h) = (i,\mathbf h)+\epsilon_{m-1}$, when both are vertices of ${Q}^{(m)}$.

The following proposition follows from Theorem 1.18, 1.19 and 6.22 of \cite{iy3}.
\begin{prop}\label{qwranc}
${Q}^{(m)}$ is the Gabriel quiver of the absolute $n$-complete algebra $T_r^{(m)}(k)$ with relations given by the commutative relation for each small square and zero relation for  for each small half square.
\end{prop}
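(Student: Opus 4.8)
The plan is to identify the combinatorial description given here with Iyama's classification of absolute $n$-complete algebras, so that the statement reduces essentially to a translation of notation. First I would recall from \cite{iy3} that $T_r^{(m)}(k)$ is constructed from a finite-dimensional algebra of finite global dimension by iterating the $m$-Auslander--Reiten translation $\tau_m = D\mathrm{Ext}^m_\Lambda(\quad,\Lambda)$, and that Theorems 1.18 and 1.19 of \cite{iy3} supply its Gabriel quiver together with its defining relations in exactly the combinatorial shape recorded above: the vertex set is indexed by the simplices $\Delta_i^{m-1}$, the arrows split into the three families given by $(i,\mathbf h)\to(i+1,\mathbf h)$, by $(i,\mathbf h)\to(i-1,\mathbf h+v_1)$, and by the internal shifts $(i,\mathbf h)\to(i,\mathbf h+v_t)$, while $\tau_m$ acts by $(i,\mathbf h)\mapsto(i,\mathbf h)+\epsilon_{m-1}$.

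Next I would verify the three pieces of data in turn. For the vertex set, I would check that $Q^{(m)}_0$ as defined equals Iyama's index set; this is immediate once the simplices $\Delta_i$ are matched. For the arrows, I would confirm that each of the three displayed families coincides with the generators produced by Iyama's construction, paying attention to the direction conventions of the shift vectors $v_t$. For the relations, the key point is that Iyama's relations, which are the commutativity (mesh) relations coming from the higher Auslander--Reiten structure together with the relations forced at the boundary of each simplex, correspond precisely to the commutative relation imposed on every small square and the zero relation imposed on every small half square in the present statement.

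The remaining ingredient is that this quiver-with-relations genuinely presents the algebra $T_r^{(m)}(k)$ and not merely a related one; this is where Theorem 6.22 of \cite{iy3} enters, guaranteeing that the absolute $n$-complete algebra is determined by, and isomorphic to, the algebra given by this bound quiver. The main obstacle I anticipate is purely bookkeeping: reconciling the indexing conventions between the two presentations, and in particular checking that the half-squares, namely the squares cut off at the boundary of a simplex $\Delta_i$ where one of the two composable paths leaves the admissible vertex range, are exactly those on which the commutative relation degenerates to a zero relation. Once the dictionary between the two labelings is fixed, the identification of arrows and relations becomes a direct comparison and the proposition follows.
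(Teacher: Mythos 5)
Your proposal is correct and matches the paper's treatment exactly: the paper offers no independent argument but simply derives the proposition from Theorems 1.18, 1.19 and 6.22 of \cite{iy3}, which is precisely the reduction you describe, with the rest being the notational bookkeeping you identify. Your more explicit verification of the vertex set, arrow families, and the degeneration of commutativity relations to zero relations at the simplex boundary is a reasonable expansion of what the paper leaves implicit.
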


We use the same ${Q}^{(m)}$ for this bound quiver of $T_r^{(m)}(k)$.
Now reindex the set of vertices of ${Q}^{(m)}$ with
$${Q}^{(m)}_0 = \{(h_0,h_1,\ldots,h_{m-1}) \in \mathbb Z^{m}|h_0 \ge 1, h_1,\ldots,h_{m-1}\ge 0, \sum_{t=1}^{m-1} h_t \le r-h_0 \}.$$
Write $Q^{(m)}_0(t) = \{(h_0,h_1,\ldots,h_{m-1}) \in Q^{(m)}_0, h_{m-1}=t\} $.
We immediately have the following fact.

\begin{prop}\label{qvcons}$Q^{(m)}_0= \cup_{t=0}^{r-1} Q^{(m)}_0(t), $ and  $Q^{(m)}_0(t)$ is obtained from  $Q^{(m)}_0(t-1)$ in such a way that for each vertex  $w\in Q^{(m)}_0(t)$,  $\tau_m w \in Q^{(m)}_0(t-1)$ and  there is an arrow $w \to w'$ with $w' \in  Q^{(m)}_0(t-1)$.
\end{prop}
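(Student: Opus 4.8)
The plan is to prove all three assertions as direct bookkeeping with the reindexed defining inequalities $h_0\ge 1$, $h_1,\ldots,h_{m-1}\ge 0$ and $\sum_{t=1}^{m-1}h_t\le r-h_0$, together with the explicit arrow set of $Q^{(m)}$ recorded in Proposition~\ref{qwranc}. The first observation is that the layer index of a vertex, i.e. the $t$ with $w\in Q^{(m)}_0(t)$, is just its last coordinate $h_{m-1}$. Since $h_{m-1}\ge 0$ and, using $h_0\ge 1$, we have $h_{m-1}\le \sum_{s=1}^{m-1}h_s\le r-h_0\le r-1$, every vertex satisfies $0\le h_{m-1}\le r-1$. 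As $h_{m-1}$ is single-valued, the layers are disjoint and exhaust $Q^{(m)}_0$, giving $Q^{(m)}_0=\cup_{t=0}^{r-1}Q^{(m)}_0(t)$.

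For the translation claim I would evaluate $\tau_m$ on a vertex $w\in Q^{(m)}_0(t)$ with $t=h_{m-1}\ge 1$. In the convention forced by the statement $\tau_m$ lowers the last coordinate by one, so only $h_{m-1}$ changes, dropping from $t$ to $t-1\ge 0$. The inequalities $h_0\ge 1$ and $h_1,\ldots,h_{m-2}\ge 0$ are untouched, while $\sum_{s=1}^{m-1}h_s$ decreases by one and so remains $\le r-h_0$. Hence $\tau_m w$ is again a vertex, now lying in $Q^{(m)}_0(t-1)$, and since $\tau_m$ is injective on each layer it identifies $Q^{(m)}_0(t)$ with a subset of $Q^{(m)}_0(t-1)$; this is the precise sense in which the upper layer is ``obtained from'' the lower one.

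The substantive point is the existence of an arrow $w\to w'$ landing one level down, and here I would invoke the explicit arrow set. An arrow lowers $h_{m-1}$ exactly when it is of the third type attached to $v_{m-1}=\epsilon_{m-2}-\epsilon_{m-1}$, namely
$$w'=w+v_{m-1}=(h_0,h_1,\ldots,h_{m-2}+1,h_{m-1}-1),$$
which exists once $m\ge 3$. One then checks that $w'$ meets the defining inequalities: its coordinates are nonnegative because $h_{m-2}+1\ge 1$ and $h_{m-1}-1\ge 0$, and crucially the governing bound is preserved because $v_{m-1}$ keeps $\sum_{s=1}^{m-1}h_s$ fixed, so $\sum\le r-h_0$ still holds. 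The side condition $h_0\le r-1$ on this arrow is automatic once $h_{m-1}\ge 1$, exactly as in the union argument. Thus for $m\ge 3$ the required level-lowering arrow always stays inside the truncation, and matching it against the relations of Proposition~\ref{qwranc} completes the proof.

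The main obstacle is precisely this boundary verification: the level-lowering arrow must not leave $Q^{(m)}$, and the clean reason it does not (for $m\ge 3$) is that the only available level-lowering arrow is the coordinate-shuffling one $v_{m-1}$, which preserves $\sum_{s=1}^{m-1}h_s$ and hence the inequality $\sum\le r-h_0$. I expect the genuine care to be needed at the extremal vertices, those with $h_0=1$ or with $\sum_{s=1}^{m-1}h_s=r-h_0$, and especially in the degenerate low-dimensional cases $m\le 2$, where the third type of arrow is absent and the level-lowering role would fall to the second type of arrow $w\to(h_0-1,h_1-1,\ldots)$, which is unavailable when $h_0=1$; these extremal configurations must be isolated and treated separately so that the uniform statement is read with the appropriate convention on $\tau_m$ and on the source/sink boundary of the truncated region.
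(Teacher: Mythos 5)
Your strategy---direct bookkeeping with the reindexed inequalities---is the right one; in fact the paper offers no argument at all (the proposition is introduced with ``We immediately have the following fact''), so a verification of this kind is exactly what is implicitly intended. Your union argument is correct, and you correctly diagnosed the orientation issue: the formula $\tau_m(i,\mathbf h)=(i,\mathbf h)+\epsilon_{m-1}$ given just before Proposition~\ref{qwranc} \emph{raises} $h_{m-1}$, so the statement of Proposition~\ref{qvcons} can only be true with the inverse (lowering) convention, which you adopt and verify. Your check that $w'=w+v_{m-1}$ with $v_{m-1}=\epsilon_{m-2}-\epsilon_{m-1}$ stays inside the vertex set for $m\ge 3$ is also complete and correct, the key point being that $v_{m-1}$ preserves $\sum_{s=1}^{m-1}h_s$ and hence the bound $\le r-h_0$.

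The genuine gap is the case $m=2$, which you explicitly defer rather than settle, and your proposed fallback would in fact fail: the arrow $w\to(h_0-1,h_1-1)$ you point to is indeed unavailable at $h_0=1$, and those vertices $(1,t)$ with $t\ge 1$ are left with no level-lowering arrow in your account. The source of the trouble is that you are reading the arrow set from the \emph{pre}-reindexing description in Proposition~\ref{qwranc}, whereas Proposition~\ref{qvcons} lives in the reindexed coordinates, in which (as the paper records immediately afterwards) the arrows are $w\to w+v_t$ for $t=0,1,\ldots,m-1$ with $v_0=-\epsilon_0$ and $v_t=\epsilon_{t-1}-\epsilon_t$. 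For $m=2$ the level-lowering arrow is therefore $(h_0,h_1)\to(h_0+1,h_1-1)$, which \emph{raises} $h_0$, so there is no obstruction at $h_0=1$: one checks $h_1-1\ge 0$ (since $t\ge 1$) and $h_1-1\le r-(h_0+1)$, i.e.\ $h_1\le r-h_0$, which is the defining inequality. With this correction the single arrow $w\to w+v_{m-1}$ works uniformly for all $m\ge 2$ (for $m\ge 3$ it agrees with the third arrow type you used, so your main computation is unaffected), and no case split or separate treatment of extremal vertices is needed. Only $m=1$ is a true exception, where $h_{m-1}=h_0$ ranges over $1,\ldots,r$; the proposition must be read as implicitly assuming $m\ge 2$, as your argument in effect does.
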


Put $v_0= -\epsilon_0$ and $v_t= \epsilon_{t-1}-\epsilon_t$ for $t=1,\ldots,m$. Take $v_0, v_1,\ldots, v_{m-1}$ as basis and write the coordinates of the vertices with respect to this basis as $(u_0, u_1, \ldots, u_{m-1})$. We see that the arrows are given by
$$(u_0,u_1,\ldots,u_{m-1})\to (u_0,u_1,\ldots,u_{m-1})+v_t$$
for $(u_0,u_1,\ldots,u_{m-1}) \in {Q}^{(m)}_0 $ and $t=0, 1, \ldots, m$. Note that  $\epsilon_{m-1}=-v_0 - v_1- \ldots - v_{m-1},$, we also have
$$\arr{rcl}{\tau_m(u_0,u_1,\ldots,u_{m-1})& =& (u_0,u_1,\ldots,u_{m-1})- v_0 - v_1 - \cdots -v_{m-1}\\ & = &(u_0,u_1,\ldots,u_{m-1}) -(1,1,\ldots,1).}$$

Thus by Proposition \ref{cubicqv}, we have the following proposition.

\begin{prop}\label{emdebq}
${Q}^{(m)}$ is embedded into $Q^*(m)$ as a subquiver such that $m$-Auslander-Reiten translation coincides with the Nakayama translation when it is defined.
\end{prop}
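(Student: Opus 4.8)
The plan is to read off both bound quivers in one common coordinate system and then reduce the statement to two genuinely combinatorial checks: injectivity of the vertex identification and agreement of the induced relations. By Proposition~\ref{cubicqv} the cubic quiver $Q^*(m)$ has vertex set $(\mathbb Z/(r+1)\mathbb Z)^m$, exactly one arrow $w\to w+\epsilon_s$ for each $s=1,\ldots,m$, and Nakayama translation $\tau(m)\colon w\mapsto w-(1,\ldots,1)$. On the other side, the reindexing of $Q^{(m)}_0$ together with the computation preceding the proposition presents every vertex of $Q^{(m)}$ in the $v$-coordinates $(u_0,\ldots,u_{m-1})$, exhibits each arrow in the form $u\to u+v_t$, and gives $\tau_m(u)=u-(1,\ldots,1)$. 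Thus the two quivers have the same local shape---translation-by-a-basis-vector arrows and translation-by-$(1,\ldots,1)$ for the distinguished translation. Identifying the ordered basis $v_0,\ldots,v_{m-1}$ with $\epsilon_1,\ldots,\epsilon_m$ and reducing the $v$-coordinates modulo $r+1$ defines a map $\iota\colon Q^{(m)}_0\to(\mathbb Z/(r+1)\mathbb Z)^m$ that sends each arrow $u\to u+v_t$ to an arrow $\iota(u)\to\iota(u)+\epsilon_s$ in the matching direction, hence is a morphism of quivers.

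First I would establish injectivity. Inverting the change of basis writes every $v$-coordinate as a partial sum $u_t=-\sum_{j=t}^{m-1}h_j$ of the nonnegative integers $h_0,\ldots,h_{m-1}$, which are constrained by $h_0\ge 1$ and $\sum_{j=0}^{m-1}h_j\le r$; hence each $u_t$ lies in the window $\{-r,\ldots,0\}$, a set of exactly $r+1$ integers that injects into $\mathbb Z/(r+1)\mathbb Z$. Two distinct points of $Q^{(m)}_0$ differ in some coordinate by a nonzero integer of absolute value at most $r$, so their images differ there modulo $r+1$; thus $\iota$ is injective on vertices and carries distinct arrows to distinct arrows, realizing $Q^{(m)}$ as a subquiver of $Q^*(m)$.

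Next I would check fullness and the induced relations. Because the region $Q^{(m)}_0$ is a fundamental domain for the wraparound by $(r+1)\mathbb Z$ in each $v$-coordinate, an arrow $\iota(u)\to\iota(u)+\epsilon_s$ of $Q^*(m)$ whose two endpoints lie in the image cannot wrap around the torus, so its endpoints obey the honest lattice relation $u'=u+v_t$ and it is the image of a genuine arrow of $Q^{(m)}$; hence $\iota$ is a full embedding. For the bound structure I would compare Proposition~\ref{cbrelation} with Proposition~\ref{qwranc}: restricting a cubic commutativity relation $\alpha(v+\epsilon_t,s)\alpha(v,t)-\alpha(v+\epsilon_s,t)\alpha(v,s)$ to the region reproduces Iyama's commutative relation when the whole square lies in $Q^{(m)}_0$, and collapses to a zero relation on the surviving length-two path when the opposite corner is deleted---exactly Iyama's zero relation on a half square. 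The translation assertion is then immediate: whenever $\tau_m(u)$ is defined, that is $u-(1,\ldots,1)\in Q^{(m)}_0$, we get $\iota(\tau_m u)=\iota(u)-(1,\ldots,1)=\tau(m)(\iota u)$.

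The main obstacle is the bookkeeping of the inequalities defining $Q^{(m)}_0$. One must see that they confine every $v$-coordinate to a single window of length $r$---this is what makes the reduction modulo $r+1$ injective and simultaneously forbids the torus-wrapping arrows that would spoil fullness---and that at the boundary of the region the deletion of an outlying corner turns each cubic commutativity relation into precisely Iyama's zero relation on the corresponding half square. Once these facts are in hand, the identification of $\tau_m$ with $\tau(m)$ is merely the comparison of the two formulas $-(1,\ldots,1)$, and the proposition follows.
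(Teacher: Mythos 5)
Your proposal is correct and takes essentially the same route as the paper: the paper obtains this proposition directly from the change of basis $v_0,\ldots,v_{m-1}$ carried out immediately before the statement (arrows become translations by basis vectors, $\tau_m$ becomes translation by $-(1,\ldots,1)$) together with Proposition~\ref{cubicqv}, and your explicit injectivity check (each $u_t=-\sum_{j\ge t}h_j$ lies in the window $\{-r,\ldots,0\}$ of $r+1$ integers) merely makes explicit a detail the paper leaves implicit. One small caveat: your fullness and relations discussion goes beyond what the proposition asserts (only a subquiver embedding compatible with the translations is claimed; the comparison of relations is deferred to Theorem~\ref{absmc}), and the no-wraparound justification is slightly loose as stated, since a window of $r+1$ values by itself does not exclude a coordinate difference of $-r\equiv 1 \bmod\,(r+1)$ --- ruling that case out requires the constraints $h_0\ge 1$ and $\sum_t h_t\le r$.
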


So we can truncate a quiver $Q^{(m)}$ from  $Q^*(m)$. To show how to truncate $Q^{(m)}$ from $Q^*(m)$, we need the following concepts. Let $B$ be a subquiver of  $Q^*(m)$, denote by $B^-$ the set of vertices from which there is an arrow to some vertex of $B$.
Define the {\em set  $c'(B)$ of $\tau(m)$-induced vertices} of $B$  to be the subset
$$c'(B)= \{v \in B^{-}| \tau^{-1} v \in B_0 \}$$
of $Q^*_0(m)$.
Write $c{(0)} (B) =B_0$, and define inductively  $c{(t+1)} (B) =c'(c{(t)} (B))$.
Define the {\em $\tau(m)$-completion} $\widetilde{B}$ of $B$ to be the full subquiver with the vertex set $\cup_t c{(t)} (B)$.

Note that $Q^*(m)$ is a bound quiver with relations defined in Proposition \ref{cbrelation} and Nakayama translation $\tau=\tau({m})$,  we define the truncation $\widehat{Q}_{r}^{(m)}$ of  $Q^*(m)$ inductively as follow.

Define $\widehat{Q}_{r}^{(1)}$ to be the bound subquiver of  $Q^*(1)$ obtained  by removing the vertex $r+1$. Clearly, this is a Dynkin quiver of $A_r$ with linear order, so it is just $Q^{(1)}$, the bound quiver of $T_r^{(1)}(k)$

Assume that $\widehat{Q}_{r}^{(m-1)}$ for  $Q^*(m-1)$ is defined.
$Q^*(m-1)$ is embedded in $Q^*(m)$ as the full bound subquiver, say, with the vertex set
$$\{(u_1,\ldots,u_{m-1},r)|(u_1,\ldots,u_{m-1})\in Q^*_0(m-1)\}.$$
Under this embedding, regard $\widehat{Q}_{r}^{(m-1)}$ as a bound subquiver $\widehat{Q}_{r}^{m-1,'}$  of $Q^*(m)$.
Define $\widehat{Q}_{r}^{(m)}$ as the $\tau({m})$-completion of $\widehat{Q}_{r}^{m-1,'}$ in $Q^*(m)$.
We have the  following theorem.
\begin{thm}\label{absmc}
$\widehat{Q}_{r}^{(m)}$ is the bound quiver of the absolute $m$-complete algebra $T^{(m)}_r(k)$. \end{thm}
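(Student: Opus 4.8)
The plan is to argue by induction on $m$, following the inductive definition of $\widehat{Q}_r^{(m)}$ itself. The base case $m=1$ is already recorded in the text: $\widehat{Q}_r^{(1)}$ is obtained from $Q^*(1)$ by deleting the vertex $r+1$, hence is the linearly oriented $A_r$, which is the bound quiver $Q^{(1)}$ of $T_r^{(1)}(k)$. For the inductive step I assume $\widehat{Q}_r^{(m-1)}$ is the bound quiver of $T_r^{(m-1)}(k)$, i.e. that it coincides with $Q^{(m-1)}$, and I aim to prove that its $\tau(m)$-completion inside $Q^*(m)$, which is by definition $\widehat{Q}_r^{(m)}$, equals $Q^{(m)}$.

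The first observation is that both $\widehat{Q}_r^{(m)}$ and $Q^{(m)}$ sit inside $Q^*(m)$ as full subquivers: the former by construction, the latter by Proposition~\ref{emdebq}, which moreover identifies the $m$-Auslander--Reiten translation $\tau_m$ with the Nakayama translation $\tau(m)$ wherever both are defined. Since a full bound subquiver of $Q^*(m)$ inherits its relations from $Q^*(m)$, it is enough to prove two things: that the two subquivers have the same vertex set, and that the induced relations reproduce those of $T_r^{(m)}(k)$ described in Proposition~\ref{qwranc}.

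For the vertex sets I would compare the increasing union $\bigcup_t c^{(t)}(\widehat{Q}_r^{(m-1),'})$ defining the $\tau(m)$-completion with the layer decomposition $Q_0^{(m)} = \bigcup_{t=0}^{r-1} Q_0^{(m)}(t)$ of Proposition~\ref{qvcons}. The embedded copy $\widehat{Q}_r^{(m-1),'}$ is the extremal layer, and Proposition~\ref{qvcons} states exactly that each further layer consists of those vertices $w$ admitting an arrow into the already-built part and whose Nakayama translate lies in it; up to reconciling the direction of the layering, this is the defining condition $w \in B^-$, $\tau^{-1} w \in B_0$ of $c'(B)$. An inner induction on $t$ then shows that $c^{(t)}$ produces precisely the layer indexed by $t$, that nothing outside the region $\{h_0 \ge 1,\ h_i \ge 0,\ \sum_{i \ge 1} h_i \le r - h_0\}$ is ever adjoined, and that the process stabilizes after $r$ steps. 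The stabilization is forced by the inequality $\sum_{i \ge 1} h_i \le r - h_0$: once the boundary is reached, no vertex satisfies both conditions defining $c'$, which is also why the completion remains a proper finite subquiver of the finite quiver $Q^*(m)$.

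The step I expect to be the real obstacle is matching the relations. By Proposition~\ref{cbrelation} the relations of $Q^*(m)$ are only the commutativity relations $\alpha(v+\epsilon_t,s)\alpha(v,t) - \alpha(v+\epsilon_s,t)\alpha(v,s)$ attached to the $2$-faces of the cubes, whereas $T_r^{(m)}(k)$ carries commutative relations on full squares together with zero relations on half squares. The resolution is that passing to the full subquiver $\widehat{Q}_r^{(m)}$ turns a commutativity relation into a zero relation precisely when one of its two paths leaves the subquiver, i.e. runs through a corner vertex outside the region: that path is already zero in $k(\widehat{Q}_r^{(m)})$, so the surviving relation reads $\alpha\beta = 0$. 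I would therefore analyse the boundary of $\{\sum_{i \ge 1} h_i \le r - h_0\}$ and show that a $2$-face has both of its relevant corners present exactly on the full squares and exactly one corner missing on the half squares, so that the induced relations are commutative on full squares and zero on half squares. Together with the identification of vertex sets this yields $\widehat{Q}_r^{(m)} = Q^{(m)}$ as bound quivers, and the induction closes.
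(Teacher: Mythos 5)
Your proposal is correct and follows essentially the same route as the paper's (very terse) proof: induction on $m$ with Proposition~\ref{qvcons} identifying the layers of $Q^{(m)}_0$ with the steps $c^{(t)}$ of the $\tau(m)$-completion, and Propositions~\ref{cbrelation} and~\ref{qwranc} matching the relations. Your explicit observation that a commutativity relation of $Q^*(m)$ induces a zero relation on $\widehat{Q}_r^{(m)}$ exactly when one corner of the $2$-face is removed is precisely the mechanism the paper leaves implicit in the phrase ``the relations are the same,'' so it is a welcome amplification rather than a departure.
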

\begin{proof}
It follows from induction and Proposition ~\ref{qvcons} that ${Q}^{(m)}$ is just $\widehat{Q}_{r}^{(m)}$.
By Proposition \ref{cbrelation} and \ref{qwranc}, the relations are the same.
\end{proof}

It follows from our construction that $\widehat{Q}_{r}^{(m)}$ is connected for all $m$.
It is clear that  $\widehat{Q}_{r}^{(m)}$ is not unique, $Q^{(m)}$ may be embedded in $Q^*(m)$ in many ways.
If we remove all the vertices whose coordinates contains $r+1$ from  $Q^*(m)$, we get a $m$-cube with each side of length $r$, in the above embedding, $\widehat{Q}_{r}^{(m)}$ is in this cube with vertex set
$$\{(u_1, \ldots, u_{m})|r \ge  u_1 \ge u_2 \ldots \ge u_{m} \ge 1\}.$$

Let $E(m)$ be the full subquiver with vertex set $Q^*(m)_0\setminus (\widehat{Q}^{(m=
+1)}_r)_0$ and call it an {\em exceptional quiver in $Q^*(m)$.}
So we see that $\widehat{Q}_{r}^{(m)}$ is obtained by removing an exceptional quiver from $Q^*(m)$.
We see that the quiver of an absolute $m$-complete  algebra is obtained from an $m$-cubic McKay quiver by removing the exception set of vertices.
When $m=1$, it is classical that the quiver of a representation finite hereditary algebra of type $A_r$ is obtained from the quiver of a representation tame hereditary algebra of type $\widetilde{A}_r$ by removing an exceptional vertex.
When $m\ge 2$, things are much more complicated and we don't know how to find the set of exceptional vertices before we find $\widehat{Q}^{r}_{(m)}$.

\section*{Acknowledgements}
The author deeply appreciates the referees for suggestions and comments on revising and improving the paper.
The referees also bring the author's attention to the interrelation of the separated directed quiver, the smash product $\Lambda\# k\mathbb Z^*$ and the repetitive algebra of the Beilinsion algebra.
The proofs of Theorem ~\ref{lsquiver}, Theorem ~\ref{repetitive} and Theorem ~\ref{repetitiveslice} are due to the referees suggestion.

\section*{References}

\end{document}